\numberwithin{equation}{section}
\newcommand\N{\mathbb N}
\newcommand\R{\mathbb R}
\newcommand\mbb\mathbb
\newcommand\mbf\mathbf
\newcommand\mcal\mathcal
\newcommand\mfrak\mathfrak
\newcommand\mrm\mathrm
\newcommand\msf\mathsf
\renewcommand\a\alpha
\renewcommand\b\beta
\newcommand\g\gamma
\newcommand\G\Gamma
\renewcommand\d\delta
\newcommand\D\Delta
\newcommand\e\varepsilon
\newcommand\z\zeta
\renewcommand\t\theta
\newcommand\Th\Theta
\newcommand\la\lambda
\newcommand\La\Lambda
\newcommand\s\sigma
\newcommand\si\varsigma
\newcommand\Si\Sigma
\newcommand\ups\upsilon
\newcommand\U\Upsilon
\newcommand\ph\varphi
\renewcommand\o\omega
\renewcommand\O\Omega
\newcommand\wt\widetilde
\newcommand\wh\widehat
\newcommand\ol\overline
\newcommand\ul\underline
\newcommand\mr\mathring
\newcommand\ub\underbrace
\newcommand\pa\partial
\newcommand\n\nabla
\newcommand\fa\forall
\newcommand\ex\exists
\newcommand\es\emptyset
\newcommand\wk\rightharpoonup
\newcommand\inc\hookrightarrow
\newcommand\linf\varliminf
\newcommand\lsup\varlimsup
\newcommand\os\overset
\newcommand\us\underset
\newcommand\sr\stackrel
\newcommand\Ot\Leftarrow
\newcommand\To\Rightarrow
\newcommand\map\mapsto
\newcommand\ot\leftarrow
\newcommand\lot\longleftarrow
\newcommand\lto\longrightarrow
\newcommand\tot\leftrightarrow
\newcommand\ltot\longleftrightarrow
\newcommand\sm\backslash
\renewcommand\Cup\bigcup
\renewcommand\Cap\bigcap
\newcommand\sub\subset
\newcommand\Sub\Subset
\newcommand\sne\subsetneq
\newcommand\bus\supset
\newcommand\Bus\Supset
\newcommand\eq\equiv
\newcommand\ox\otimes
\newcommand\Ox\bigotimes
\newcommand\pl\oplus
\newcommand\Pl\bigoplus
\newcommand\x\times
\renewcommand\c\circ
\newcommand\q\quad
\renewcommand\l\left
\renewcommand\r\right
\newcommand\fr\frac
\newcommand{\var}{\varepsilon}
\newtheorem{Thm}{Theorem}[section]
\newtheorem{Lem}[Thm]{Lemma}
\newtheorem{Prop}[Thm]{Proposition}
\newtheorem{Rem}[Thm]{Remark}
\begin{document}

\title[Elliptic equations involving the critical Sobolev exponent]
{Concentrated solutions for a critical elliptic equation}
\author[L. Duan and S. Tian]{Lipeng Duan and Shuying Tian}

\address[Lipeng Duan]{School of Mathematics and Information Science,
Guangzhou University, Guangzhou 510006,  China.}
\email{lpduan777@sina.com}

\address[Shuying Tian] {Department of Mathematics, School of Science, Wuhan University of Technology, Wuhan 430070, China}
  \email{sytian@whut.edu.cn}
  \date{\today}
\begin{abstract}
In this paper, we are concerned with the following elliptic equation
\begin{equation*}
\begin{cases}
-\Delta u= Q(x)u^{2^*-1 }+\varepsilon
u^{s},~  &{\text{in}~\Omega},\\[1mm]
u>0,~  &{\text{in}~\Omega},\\[1mm]
u=0, &{\text{on}~\partial \Omega},
\end{cases}
\end{equation*}
where $N\geq 3$, $s\in [1,2^*-1)$ with $2^*=\frac{2N}{N-2}$, $\varepsilon>0$, $\Omega$ is a smooth bounded domain in $\mathbb{R}^N$.
 Under some conditions on $Q(x)$, Cao and Zhong in Nonlin. Anal. TMA (Vol 29, 1997, 461--483) proved that there exists a single-peak solution for small $\varepsilon$ if $N\geq 4$ and
$s\in (1,2^*-1)$. And they proposed in Remark 1.7 of their paper that

\vskip 0.1cm\begin{center}
\emph{``it is interesting to know the existence of single-peak solutions  for small $\varepsilon$  and  $s=1$''.}
\end{center}\vskip 0.1cm

\noindent  Also it was addressed in Remark 1.8 of their paper that

\vskip 0.1cm
\begin{center}
\emph{``the question of solutions concentrated at several points at the same time is still open''.}
\end{center}\vskip 0.1cm

\noindent
 Here we give some confirmative  answers to the above two questions. Furthermore, we prove the
local uniqueness of the multi-peak solutions. And our results show that
the concentration of the solutions to above problem is  delicate  whether $s=1$ or $s>1$.
\end{abstract}

\maketitle

{\small
\keywords {\noindent {\bf Keywords:}
 {Critical Sobolev exponent, Pohozaev identity, Non-existence, Existence, Uniqueness}
\smallskip
\newline
\subjclass{\noindent {\bf 2010 Mathematics Subject Classification:} 35A01 $\cdot$ 35B25 $\cdot$ 35J20 $\cdot$ 35J60}
}

\section{Introduction and main results}
\setcounter{equation}{0}
In this paper, we are concerned with the following  elliptic equations
\begin{equation}\label{1.1}
\begin{cases}
-\Delta u= Q(x)u^{2^*-1 }+\varepsilon
u^{s},~  &{\text{in}~\Omega},\\[1mm]
u>0,~  &{\text{in}~\Omega},\\[1mm]
u=0, &{\text{on}~\partial \Omega},
\end{cases}
\end{equation}
where $N\geq 3$, $s\in [1,2^*-1)$ with $2^*=\frac{2N}{N-2}$, $\varepsilon>0$ is a small parameter, $\Omega$ is a smooth and bounded domain in $\R^N$.
Let $0\leq Q(x)\in C(\bar\Omega)\bigcap C^2(\Omega)$ satisfy the following condition.

\vskip 0.2cm

\noindent \textbf{Condition (Q).} ~There exist $k$ different points $a_j = ({a_j^1}\cdots, {a_j^N} )  \in \Omega$  with $j=1,\cdots,k$ such that
\begin{equation}\label{4-24-1}
Q(a_j)>0,~\nabla Q(a_j)=0, ~ \Delta Q(a_j)<0~\mbox{and}~ ~det~\Big(\frac{\partial^2 Q(a_j)}{\partial x^i\partial x^l}   \Big)_{1\leq i,l\leq N} \neq 0,~\mbox{for}~j=1,\cdots,k.
\end{equation}

\vskip 0.2cm

For the case that $Q(x)$ is a positive constant, a well known fact is that  Brezis and Nirenberg \cite{BN1983} established the existence of one positive solution of \eqref{1.1} when $s=1$ and
  $\varepsilon \in (0,\lambda_1)$, where $\lambda_1$ denotes the first eigenvalue of $-\Delta$ with
0-Dirichlet boundary condition on $\partial\Omega$.
Also it is well known in \cite{Poh1965} that \eqref{1.1} has no solution in the case where $\Omega$ is star-shaped and  $\varepsilon=0$. From then a lot of attention has been paid to the limiting behavior  of the solutions $u_\varepsilon$
 of \eqref{1.1} as $\varepsilon\rightarrow 0$.
For more details on this aspect, one can refer to \cite{Gla1993,Han,MP2002,Rey1990}.
In this case, the positive solution will concentrate at the critical point of the Robin function or the Kirchhoff-Routh function. Also we point out that the research on the
 critical points of the Robin function or the Kirchhoff-Routh function  are very interesting, one can refer to very recently work \cite{Bartsch1} and the references
therein for further results.

When $Q(x)$ is not a positive constant, a very early result is \cite{Esc}. Here
the existence of problem \eqref{1.1} has been established when $s=1$ and
  $\varepsilon \in (0,\lambda_1)$.
A nature problem is that how about the property of concentrated solutions of problem \eqref{1.1} when $Q(x)$ is not a positive constant. In other words, if the concentrated solution of problem \eqref{1.1} exists, what is the location of the concentrated points. And under what conditions on $Q(x)$, can we find a concentrated solution of problem \eqref{1.1}.
 A partial answer of these questions is given by Cao and Zhong \cite{CZ1997}. Their results show  that the critical points of $Q(x)$
 play  a more important role than the critical points of the Robin function. And to state their results, we denote that $$
S=\inf \Big\{\displaystyle\int_{\Omega}|\nabla u|^2~~\big| ~~u\in H^1_0(\Omega),~\displaystyle\int_{\Omega}|u|^{2^*}=1\Big\}$$
is the best Sobolev constant and  $\delta_x$ is the Dirac mass at $x$.
Now Cao and Zhong's main results in \cite{CZ1997} are as follows.

\vskip 0.3cm

\noindent \textbf{Theorem A.}~\,~ Suppose that $N\geq 4$, $ s\in(1,\frac{N+2}{N-2})$ and $Q(x)$ satisfies Condition (Q). Then there exists an $\varepsilon_0>0$  such that for $\varepsilon\in (0,\varepsilon_0]$ and $i=1,\cdots,k$, problem \eqref{1.1} has a solution $u_\varepsilon$ satisfying (in the sense of measure)
\begin{equation*}
\big|\nabla u_\varepsilon\big|^2\rightharpoonup Q(a_i)^{-(N-2)/2}S^{N/2}\delta_{a_i}~~\mbox{as}~\varepsilon\rightarrow 0~~\mbox{and}~~
\big|  u_\varepsilon\big|^{2^*}\rightharpoonup Q(a_i)^{-N/2}S^{N/2}\delta_{a_i}~~\mbox{as}~\varepsilon\rightarrow 0.
\end{equation*}

 \vskip 0.2cm

Next, Cao and Zhong proposed the following two questions (Remark 1.7 and Remark 1.8 in \cite{CZ1997}):

\vskip 0.2cm

\noindent\emph{\textup{(i)}
\textbf{It is interesting to know the existence of single-peak solutions  for small $\varepsilon$  and  $s=1$.}}

\vskip 0.2cm

\noindent\emph{\textup{(ii)}
\textbf{The question of solutions which concentrate at several points at the same time is still open.}}

\vskip 0.2cm

\noindent And for further results on the concentrated solution of \eqref{1.1}, one can also refer to \cite{CN1995,CY1999}. In this paper, we give some confirmative  answers to the above questions by finite dimensional reduction and local Pohozaev identity. Also, we would like to point out that
local Pohozaev identity has been widely used on the concentrated solutions  in nonlinear elliptic equations recently(see \cite{Cao1,Deng,GMPY20,PWY2018}).

It is well-known that the equation $-\Delta u= u^{\frac{N+2}{N-2}} ~~\mbox{in}~\R^N$ has a family of solutions
\begin{equation*}
U_{x,\lambda}(y)=\big(N(N-2)\big)^{\frac{N-2}{4}}\frac{\lambda^{(N-2)/2}}{(1+\lambda^2|y-x|^2)^{(N-2)/2}},
\end{equation*}
where $x\in\R^N$ and $\lambda\in \R^+$.
Now for any given $f\in H^1(\Omega)$, let $P$ denote the projection from
$H^1(\Omega)$ onto $H^{1}_0(\Omega)$, i.e., $u=Pf$ is the solution of
\begin{equation*}
\begin{cases}
\Delta u=\Delta f, &{\text{in}~\Omega}, \\
u=0, &{\text{on}~\partial\Omega}.
\end{cases}
\end{equation*}

  First, we give the structure of the concentrated solutions of problem \eqref{1.1}.
\begin{Thm}\label{prop1}
Let $N\geq 4$, $s\in [1,2^*-1)$ and $m=1,\cdots,k$,
suppose that  $u_{\varepsilon}(x)$ is a solution of \eqref{1.1} satisfying
\begin{equation}\label{4-6-1}
\big|\nabla u_\varepsilon\big|^2\rightharpoonup \sum^m_{j=1}Q(a_j)^{-(N-2)/2}S^{N/2}\delta_{a_j}~~\mbox{as}~\varepsilon\rightarrow 0~~\mbox{and}~~
\big|  u_\varepsilon\big|^{2^*}\rightharpoonup \sum^m_{j=1} Q(a_j)^{-N/2}S^{N/2}\delta_{a_j}~~\mbox{as}~\varepsilon\rightarrow 0,
\end{equation}then
 $u_{\varepsilon}(x)$ can be written as
\begin{equation}\label{luo--2}
u_{\varepsilon}= \sum^m_{j=1}  Q(a_j)^{-(N-2)/4}    PU_{x_{\varepsilon,j},    \lambda_{\varepsilon,j}}+w_{\varepsilon},
\end{equation}
satisfying
 $\lambda_{\varepsilon,j}=:\big(u_\varepsilon(x_{\varepsilon,j})\big)^{\frac{2}{N-2}}$, $
 x_{\varepsilon,j} \rightarrow a_j,~\lambda_{\varepsilon,j}\rightarrow +\infty,~ \|w_{\varepsilon}\|=o(1)$.
\end{Thm}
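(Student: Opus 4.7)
The plan is to derive the decomposition \eqref{luo--2} by combining a Struwe-type profile extraction for the subcritically perturbed critical equation \eqref{1.1} with a blow-up analysis at each concentration point $a_j$. Step one is boundedness and identification of the weak limit: assumption \eqref{4-6-1} immediately yields $\|u_\varepsilon\|_{H^1_0}^2\to \sum_{j=1}^m Q(a_j)^{-(N-2)/2}S^{N/2}$, so $\{u_\varepsilon\}$ is bounded in $H^1_0(\Omega)$ and, along a subsequence, $u_\varepsilon\rightharpoonup u_0$. Passing to the limit in \eqref{1.1} (the lower-order term $\varepsilon u_\varepsilon^s$ vanishes in $H^{-1}(\Omega)$ as $\varepsilon\to 0$ because $s<2^*-1$), one sees that $u_0\in H^1_0(\Omega)$ is a nonnegative solution of $-\Delta u_0=Q(x)u_0^{2^*-1}$, and the absence of a regular part in \eqref{4-6-1} forces $u_0\equiv 0$.

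Step two is a blow-up at each $a_j$. Pick $x_{\varepsilon,j}$ to be a local maximum of $u_\varepsilon$ in a small ball around $a_j$, and set $\lambda_{\varepsilon,j}:=u_\varepsilon(x_{\varepsilon,j})^{2/(N-2)}$. Standard elliptic regularity applied to \eqref{1.1} away from $\{a_1,\dots,a_m\}$, combined with $u_\varepsilon\rightharpoonup 0$, gives $u_\varepsilon\to 0$ locally uniformly on $\overline\Omega\setminus\{a_1,\dots,a_m\}$, so $x_{\varepsilon,j}\to a_j$ and $\lambda_{\varepsilon,j}\to+\infty$. The rescaling $v_\varepsilon^j(y):=\lambda_{\varepsilon,j}^{-(N-2)/2}u_\varepsilon(x_{\varepsilon,j}+\lambda_{\varepsilon,j}^{-1}y)$ satisfies an equation in which the perturbation carries the factor $\varepsilon\lambda_{\varepsilon,j}^{(s(N-2)-(N+2))/2}$ in front of $(v_\varepsilon^j)^s$, which tends to $0$ since $s<2^*-1$. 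Thus $v_\varepsilon^j\to v^j$ in $C^2_{\mathrm{loc}}(\R^N)$ with $v^j$ a positive, finite-energy solution of $-\Delta v=Q(a_j)v^{2^*-1}$; the Caffarelli--Gidas--Spruck classification then forces $v^j=Q(a_j)^{-(N-2)/4}U_{0,\mu}$ for some $\mu>0$ pinned down by the normalization $v_\varepsilon^j(0)\equiv 1$.

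To globalize this local picture into \eqref{luo--2}, define $w_\varepsilon:=u_\varepsilon-\sum_{j=1}^m Q(a_j)^{-(N-2)/4}PU_{x_{\varepsilon,j},\lambda_{\varepsilon,j}}$. Since $\lambda_{\varepsilon,i}\lambda_{\varepsilon,j}|x_{\varepsilon,i}-x_{\varepsilon,j}|^2\to+\infty$ for $i\neq j$, the family $\{PU_{x_{\varepsilon,j},\lambda_{\varepsilon,j}}\}_{j=1}^m$ is asymptotically orthogonal in $H^1_0(\Omega)$. Using the well-known expansions $\|PU_{x,\lambda}\|^2=S^{N/2}+O(\lambda^{-(N-2)})$ and $\int_\Omega|PU_{x,\lambda}|^{2^*}=S^{N/2}+O(\lambda^{-N})$ together with \eqref{4-6-1}, the leading terms of $\|u_\varepsilon\|^2_{H^1_0}$ and of $\int_\Omega|u_\varepsilon|^{2^*}$ exactly match those of the bubble sum, which forces $\|w_\varepsilon\|_{H^1_0}=o(1)$.

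The main obstacle I expect is ruling out hidden satellite bubbles at finer scales concentrating at the same $a_j$. I would handle this by applying Struwe's lemma inductively to the residuals $u_\varepsilon-\sum_{j=1}^{\ell} Q(a_j)^{-(N-2)/4}PU_{x_{\varepsilon,j},\lambda_{\varepsilon,j}}$: any further bubble would contribute at least $S^{N/2}\min_j Q(a_j)^{-(N-2)/2}$ to the total energy, which is precluded by the exact value of the limit in \eqref{4-6-1}, so the process terminates after exactly $m$ steps with the right centers and heights. The newly admitted case $s=1$ enters only through the harmless factor $\varepsilon\lambda_{\varepsilon,j}^{-2}$ in the rescaled equation, which still vanishes; the genuinely delicate features of $s=1$, to be exploited in the subsequent sharp expansion of $w_\varepsilon$, do not affect this structural result.
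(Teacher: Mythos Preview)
Your proposal is correct and follows essentially the same strategy as the paper: identify each concentration point, rescale, use the Liouville/Caffarelli--Gidas--Spruck classification of the limit profile, and check that the remainder is $o(1)$ in $H^1_0$. The only organisational difference is that the paper peels off the bubbles \emph{iteratively}---after extracting the bubble at $a_1$ it shows that the residual $w_{1,\varepsilon}$ satisfies the analogue of \eqref{4-6-1} with the sum now running over $j=2,\dots,m$, and then repeats---whereas you extract all $m$ bubbles at once and close by an energy-balance argument; the iterative scheme makes the exclusion of satellite bubbles automatic (each step removes exactly one Dirac mass from \eqref{4-6-1}), while you invoke Struwe's global compactness for the same purpose. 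Both routes are standard and lead to the same conclusion; your treatment is in fact somewhat more explicit about why the lower-order term $\varepsilon u_\varepsilon^s$ disappears under rescaling (including in the newly admitted case $s=1$), a point the paper's proof takes for granted.
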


Next result is concerned with the non-existence of concentrated solutions of \eqref{1.1} when $s=1$.
\begin{Thm}\label{th1-1}
Suppose that
$N\geq 5$, $s=1$ and $Q(x)$ satisfies Condition (Q). Then for any $m=1,\cdots,k$, problem \eqref{1.1} has no solutions $u_\varepsilon$ satisfying
 \eqref{4-6-1}.
\end{Thm}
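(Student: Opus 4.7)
The strategy is to argue by contradiction using two local Pohozaev identities anchored at the putative concentration point $a_j$, fed by the expansion of $u_\varepsilon$ furnished by Theorem \ref{prop1}. Suppose a solution $u_\varepsilon$ satisfying \eqref{4-6-1} exists and write it in the form \eqref{luo--2}; fix $\delta>0$ small so that the balls $B_\delta(a_j)$ are pairwise disjoint and compactly contained in $\Omega$. I would test the equation on $B_\delta(a_j)$ against the standard Pohozaev multiplier $(x-a_j)\cdot\nabla u_\varepsilon+\tfrac{N-2}{2}u_\varepsilon$. After integration by parts, the critical-exponent cancellation $N/2^*=(N-2)/2$ eliminates the $Qu^{2^*}$ bulk contribution and leaves
\[
\frac{1}{2^*}\int_{B_\delta(a_j)} u_\varepsilon^{2^*}(x-a_j)\cdot\nabla Q(x)\,dx + \varepsilon\int_{B_\delta(a_j)} u_\varepsilon^2\,dx = \mathcal{B}^{(j)}_\varepsilon,
\]
where $\mathcal{B}^{(j)}_\varepsilon$ collects boundary integrals of $u_\varepsilon$ and $\nabla u_\varepsilon$ on $\partial B_\delta(a_j)$. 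Testing instead against $\partial_i u_\varepsilon$ yields the companion identity
\[
\frac{1}{2^*}\int_{B_\delta(a_j)}\partial_i Q(x)\,u_\varepsilon^{2^*}\,dx = \widetilde{\mathcal{B}}^{(j)}_{\varepsilon,i}.
\]

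Next I would substitute \eqref{luo--2} into both identities and Taylor-expand $Q$ at $a_j$. Since $U_{x_{\varepsilon,j},\lambda_{\varepsilon,j}}$ is radial about $x_{\varepsilon,j}$, odd moments vanish by symmetry, and the scalings
\[
\int_{\R^N} U_{0,\lambda}^{2^*}|y|^2\,dy = \mathrm{I}_1\,\lambda^{-2},\qquad \int_{\R^N} U_{0,\lambda}^2\,dy = \mathrm{I}_2\,\lambda^{-2},
\]
with $\mathrm{I}_1,\mathrm{I}_2>0$ finite (the latter being finite precisely because $N\geq 5$), lead, on setting $y_\varepsilon:=x_{\varepsilon,j}-a_j$, to
\[
\int_{B_\delta(a_j)} u_\varepsilon^{2^*}(x-a_j)\cdot\nabla Q\,dx = \frac{\mathrm{I}_1\,\Delta Q(a_j)}{N\,Q(a_j)^{N/2}}\lambda_{\varepsilon,j}^{-2}+\frac{S^{N/2}\langle\nabla^2 Q(a_j)y_\varepsilon,y_\varepsilon\rangle}{Q(a_j)^{N/2}}+o(\lambda_{\varepsilon,j}^{-2}),
\]
together with $\varepsilon\int u_\varepsilon^2 = \varepsilon\,\mathrm{I}_2\,Q(a_j)^{-(N-2)/2}\lambda_{\varepsilon,j}^{-2}+o(\lambda_{\varepsilon,j}^{-2})$. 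The other bubbles $PU_{x_{\varepsilon,k},\lambda_{\varepsilon,k}}$ ($k\neq j$) are bounded on $B_\delta(a_j)$ by $C\lambda_{\varepsilon,k}^{-(N-2)/2}$, and decay of $PU$ near the boundary yields $\mathcal{B}^{(j)}_\varepsilon,\widetilde{\mathcal{B}}^{(j)}_{\varepsilon,i}=O(\lambda_{\varepsilon,j}^{-(N-2)})$. All of these remainders are $o(\lambda_{\varepsilon,j}^{-2})$ exactly because $N\geq 5$.

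The same analysis applied to the companion identity produces $(\nabla^2 Q(a_j)y_\varepsilon)_i = O(|y_\varepsilon|^2+\lambda_{\varepsilon,j}^{-2})$, so the nondegeneracy $\det\nabla^2 Q(a_j)\neq 0$ forces $|y_\varepsilon|=O(\lambda_{\varepsilon,j}^{-2})$. Consequently $\langle\nabla^2 Q(a_j)y_\varepsilon,y_\varepsilon\rangle=O(\lambda_{\varepsilon,j}^{-4})$, and multiplying the main identity by $\lambda_{\varepsilon,j}^2$ and sending $\varepsilon\to 0$ reduces it to
\[
\frac{\mathrm{I}_1}{2^*N}\,\frac{\Delta Q(a_j)}{Q(a_j)^{N/2}}=0,
\]
which is incompatible with $\Delta Q(a_j)<0$.

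The central technical obstacle will be the quantitative control of the remainder $w_\varepsilon$ and of the bubble-bubble interactions: one must verify that their contribution to each bulk integral is genuinely $o(\lambda_{\varepsilon,j}^{-2})$. This should be supplied by the standard finite-dimensional reduction machinery, giving a sharp bound on $\|w_\varepsilon\|$ together with pointwise estimates on $PU_{x,\lambda}-U_{x,\lambda}$ via the regular part of the Green's function. The hypothesis $N\geq 5$ is \emph{essential}: in dimension $N=4$ one has instead $\int U^2\sim \lambda^{-2}\log\lambda$, so the $\varepsilon u^2$ term is of strictly larger order than the $\Delta Q$ contribution, and the Pohozaev identity yields only a balance $\varepsilon\log\lambda\sim\mathrm{const}$ rather than an obstruction, which is consistent with the Brezis--Nirenberg existence phenomenon in dimension four.
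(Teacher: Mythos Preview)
Your approach is essentially the same as the paper's: both derive a contradiction by combining the scaling Pohozaev identity on a small ball (which isolates the $\Delta Q(a_j)\lambda_{\varepsilon,j}^{-2}$ term against the $\varepsilon\int u_\varepsilon^{2}$ term) with the translational identity (which controls $|x_{\varepsilon,j}-a_j|$), feeding in the decomposition from Theorem~\ref{prop1} and a sharp bound on $\|w_\varepsilon\|$ obtained from the reduction machinery. The paper centers the Pohozaev identity at $x_{\varepsilon,j}$ rather than at $a_j$, so the cross term $\langle\nabla^2 Q(a_j)\,y_\varepsilon,y_\varepsilon\rangle$ never appears, and it only obtains $|y_\varepsilon|=o(\bar\lambda_\varepsilon^{-1})$ rather than your $O(\lambda_{\varepsilon,j}^{-2})$; but these differences are cosmetic, and your reading of the role of $N\ge 5$ (finiteness of $\int U^2$, boundary terms of order $\lambda^{-(N-2)}=o(\lambda^{-2})$) is exactly right.

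One point does need tightening when $m\ge 2$. Your remainder bounds (boundary terms, cross--bubble contributions, and especially the $w_\varepsilon$ error) are global in the heights: the standard reduction gives $\|w_\varepsilon\|=O\big(\sum_{l}\lambda_{\varepsilon,l}^{-2}\big)$ (plus an $\varepsilon$--term), and the $C^1$ estimate on $\partial B_\delta(a_j)$ is $O\big(\sum_l\lambda_{\varepsilon,l}^{-(N-2)/2}\big)$. Without knowing a priori that the $\lambda_{\varepsilon,l}$ are mutually comparable, you cannot assert that these are $o(\lambda_{\varepsilon,j}^{-2})$ for an \emph{arbitrary} $j$. The paper resolves this by selecting the index $j_0$ with $\lambda_{\varepsilon,j_0}=\min_l\lambda_{\varepsilon,l}$, so that every $\lambda_{\varepsilon,l}^{-\alpha}$ is dominated by $\lambda_{\varepsilon,j_0}^{-\alpha}$, and runs the contradiction at that index. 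You should make the same choice.
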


Here our idea  to prove Theorem \ref{prop1} and Theorem \ref{th1-1} is as follows.
If $u_{\varepsilon}(x)$ is a solution of \eqref{1.1}, then we have following \emph{local Pohozaev identities}:
\emph{\begin{equation}\label{clp-1}
\frac{1}{2^*}\int_{\Omega'} \frac{\partial Q(x)}{\partial x^i} u_\varepsilon^{2^*}{\mathrm d}x =
\int_{\partial \Omega'}\Big(
\frac{\partial u_\varepsilon}{\partial \nu}\frac{\partial u_\varepsilon}{\partial x^i}
+\big(\frac{1}{2^*} Q(x) u_\varepsilon^{2^*} -\frac{1}{2} |\nabla u_\varepsilon|^2  + \frac{\varepsilon}{{s+1}}  u_\varepsilon^{s+1}\big)\nu^i   \Big){\mathrm d}\sigma,
\end{equation}
and
\begin{equation}\label{clp-10}
\begin{split}
\frac{1}{2^*}&\int_{\Omega'}\Big((x-x_{\varepsilon,j})\cdot\nabla Q(x)\Big)  u^{2^*}_\varepsilon   {\mathrm d}x +(1-\frac{N}{2}+\frac{N}{1+s})\varepsilon\int_{ \Omega'} u^{s+1}_\varepsilon {\mathrm d}x
\\=&
 \int_{\partial \Omega'} \Big[\Big(\frac{Q(x)}{2^*}u^{2^*}_\varepsilon+\frac{\varepsilon}{{s+1}
  } u_{\varepsilon}^{s+1}-\frac{1}{2}
|\nabla u_{\varepsilon}|^2\Big) \big((x-x_{\varepsilon,j})\cdot\nu\big)
  +
\Big((x-x_{\varepsilon,j})\cdot\nabla u_{\varepsilon}
+\frac{N-2}{2}u_{\varepsilon}\Big)\frac{\partial u_{\varepsilon}}{\partial\nu} \Big]{\mathrm d}\sigma,
\end{split}
\end{equation}
where $\Omega'\subset\subset \Omega$ and $\nu(x)=\big(\nu^{1}(x),\cdots,\nu^N(x)\big)$ is the outward unit normal of $\partial \Omega'$.}
Then using Pohozaev identity \eqref{clp-1} and blow-up analysis, we find the structure of the concentrated solutions (Theorem \ref{prop1}). Later we deduce Theorem  \ref{th1-1} by contradiction when $N\geq 5$
and  $s=1$ with the help of Pohozaev identity \eqref{clp-10} .

 \vskip 0.2cm

Now we tend to establish the existence of multi-peak solutions of \eqref{1.1} under some conditions.
\begin{Thm}\label{th1.2}
Suppose that $N=4$, $s=1$ or $N\geq 4$, $s\in (1,2^*-1)$, and $Q(x)$ satisfies Condition (Q). Then for any $m=1,\cdots,k$,  problem  \eqref{1.1} has a  solution  $u_\varepsilon$ satisfying \eqref{4-6-1}.
\end{Thm}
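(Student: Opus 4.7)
The plan is a Lyapunov--Schmidt finite-dimensional reduction anchored at the multi-bubble ansatz
\begin{equation*}
W_{\vec x,\vec\la}(y)=\sum_{j=1}^{m} Q(a_j)^{-(N-2)/4}\,PU_{x_j,\la_j}(y),
\end{equation*}
with $x_j$ in a small neighbourhood of $a_j$ and $\la_j$ large. I seek a solution of \eqref{1.1} in the form $u_\varepsilon=W_{\vec x,\vec\la}+\phi_\varepsilon$, with $\phi_\varepsilon$ orthogonal in $H^1_0(\Omega)$ to the approximate kernel
\begin{equation*}
E_{\vec x,\vec\la}=\mbox{span}\big\{PU_{x_j,\la_j},\ \pa_{\la_j}PU_{x_j,\la_j},\ \pa_{x_j^l}PU_{x_j,\la_j}\ :\ 1\le j\le m,\ 1\le l\le N\big\}.
\end{equation*}
Since $a_1,\dots,a_m$ are mutually distinct and the Talenti family $\{U_{x,\la}\}$ is non-degenerate, projecting \eqref{1.1} onto $E_{\vec x,\vec\la}^{\perp}$ produces an equation $L_{\vec x,\vec\la}\phi=\mcal N(\phi)+R_{\vec x,\vec\la,\varepsilon}$ in which the linearised operator $L_{\vec x,\vec\la}$ is uniformly invertible with bounded inverse. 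A standard contraction-mapping argument on a small ball in $H^1_0(\Omega)\cap E_{\vec x,\vec\la}^{\perp}$ then yields a unique $\phi=\phi(\vec x,\vec\la,\varepsilon)$ with $\|\phi\|=o(1)$ uniformly on the configuration space, whose size is strictly smaller than the $(\vec x,\vec\la)$--dependent corrections computed in Step~2.

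By standard reduction theory (cf.\ \cite{CZ1997,GMPY20,PWY2018}), critical points of the reduced functional $\mcal F_\varepsilon(\vec x,\vec\la):=I_\varepsilon\bigl(W_{\vec x,\vec\la}+\phi(\vec x,\vec\la,\varepsilon)\bigr)$, where $I_\varepsilon$ is the action functional associated with \eqref{1.1}, correspond to genuine critical points of $I_\varepsilon$ and hence to solutions of \eqref{1.1}. Combining the single-bubble expansion at each peak, the Green's-function cross-interactions between distinct peaks, the Hessian expansion of $Q$ at $a_j$ evaluated at the bubble scale, and the $\varepsilon$-term $\int PU_{x_j,\la_j}^{s+1}$, one arrives at
\begin{equation*}
\mcal F_\varepsilon(\vec x,\vec\la)=\sum_{j=1}^{m}\frac{Q(a_j)^{-(N-2)/2}S^{N/2}}{N}+\sum_{j=1}^{m}\Big[\frac{A_j\,\Delta Q(a_j)}{\la_j^2}+B_j\,D^2Q(a_j)(x_j-a_j,x_j-a_j)-C_j\,\varepsilon\,\Theta_{N,s}(\la_j)\Big]+\mbox{l.o.t.},
\end{equation*}
with positive constants $A_j,B_j,C_j$ depending only on $N$, $s$ and $Q(a_j)$, and with
\begin{equation*}
\Theta_{N,s}(\la)=\begin{cases}\la^{-\beta},\quad \beta=N-\tfrac{(s+1)(N-2)}{2}\in(0,2),& N\ge 4,\ s\in(1,2^*-1),\\[2pt] \la^{-2}\log\la,& N=4,\ s=1;\end{cases}
\end{equation*}
in the dimension $N=4$ the Robin function contribution enters at the same order $\la_j^{-2}$ and is absorbed into the displayed coefficient structure.

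The finite-dimensional problem now decouples. Because $\det D^2Q(a_j)\neq 0$, the map $x_j\mapsto B_j\,D^2Q(a_j)(x_j-a_j,x_j-a_j)$ has a strict non-degenerate extremum at $x_j=a_j$; because $\Delta Q(a_j)<0$, the scalar function
\begin{equation*}
\la_j\ \longmapsto\ \frac{A_j\,\Delta Q(a_j)}{\la_j^2}-C_j\,\varepsilon\,\Theta_{N,s}(\la_j)
\end{equation*}
has, for small $\varepsilon$, a unique non-degenerate critical point $\la_j^{*}(\varepsilon)$ with $\la_j^{*}\to+\infty$: explicitly $\la_j^{*}\sim\varepsilon^{-1/(2-\beta)}$ in the range $s\in(1,2^*-1)$, and $\la_j^{*}\sim e^{c/\varepsilon}$ (some $c>0$) in the critical case $N=4,\,s=1$. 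A local Brouwer degree computation applied to $\nabla\mcal F_\varepsilon$ on a small neighbourhood of $(a_1,\dots,a_m,\la_1^{*},\dots,\la_m^{*})$ then produces a critical point $(\vec x_\varepsilon,\vec\la_\varepsilon)$ of $\mcal F_\varepsilon$. The corresponding $u_\varepsilon=W_{\vec x_\varepsilon,\vec\la_\varepsilon}+\phi$ is positive for small $\varepsilon$ by the maximum principle and, since $\|\phi\|=o(1)$, $\la_{\varepsilon,j}\to+\infty$ and $x_{\varepsilon,j}\to a_j$, it satisfies the concentration profile \eqref{4-6-1}.

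The \textbf{main obstacle} is the borderline case $N=4$, $s=1$: the $L^2$-mass of $U_{x,\la}$ is only logarithmically divergent, so $\Theta_{4,1}(\la)=\la^{-2}\log\la$ competes at the same scale $\la^{-2}$ as both the $Q$-Hessian term and the Robin-function contribution. Consequently the error estimate for $R_{\vec x,\vec\la,\varepsilon}$, the size bound for $\phi$, and the energy expansion above all have to be pushed one order beyond the generic precision, and one must verify that the coefficient of $\log\la$ in the resulting balance equation has precisely the sign forcing $\la_j^{*}(\varepsilon)\to+\infty$ as $\varepsilon\to 0^+$. This same mechanism explains why the parallel non-existence Theorem~\ref{th1-1} applies for $N\ge 5$, $s=1$: there the logarithm is absent, the $\varepsilon$-contribution sits at exactly the order $\la^{-2}$ with no extra $\log\la$ freedom, and consequently the leading $\la$-derivative of $\mcal F_\varepsilon$ never vanishes at the large scales required by concentration.
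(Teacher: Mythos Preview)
Your overall strategy---Lyapunov--Schmidt reduction followed by a finite-dimensional variational search on the reduced energy---is sound and is the classical route (it is essentially what \cite{CZ1997} does for a single peak). The paper takes a genuinely different path: after the reduction (Proposition~\ref{Prop-Luo1}) it never forms a reduced energy. Instead it works directly with the $m(N+1)$ projected equations \eqref{a22-1-8}--\eqref{a2-13-1}, expands them to leading order (Proposition~\ref{Prop1.2}), and applies the Poincar\'e--Miranda theorem to the resulting algebraic system \eqref{2020-01-02-11}--\eqref{2020-01-02-12}. This ``equation'' approach, borrowed from \cite{PWY2018}, avoids the need for $C^1$-dependence of $\phi$ on $(\vec x,\vec\la)$ and for a second-order energy expansion; your variational/degree route needs both but yields a cleaner topological picture.

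There is, however, a sign error that breaks your argument as written. The $Q$-Hessian contribution to $I_\varepsilon(W)$ comes from $-\tfrac{1}{2^*}\int Q(x)W^{2^*}$ and therefore enters the expansion as $-A_j\,\Delta Q(a_j)\,\la_j^{-2}$ with $A_j>0$, not $+A_j\,\Delta Q(a_j)\,\la_j^{-2}$. With your stated sign and $\Delta Q(a_j)<0$, the scalar map
\[
\la\ \longmapsto\ \frac{A_j\,\Delta Q(a_j)}{\la^{2}}-C_j\,\varepsilon\,\Theta_{N,s}(\la)
\]
is the sum of two negative, strictly increasing terms and has \emph{no} critical point, contrary to your claim. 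Once the sign is corrected the balance works exactly as you describe and reproduces the paper's \eqref{2020-01-02-11}. A smaller point: Condition~(Q) only gives $\det D^2Q(a_j)\neq 0$, not definiteness, so $x_j\mapsto D^2Q(a_j)(x_j-a_j,x_j-a_j)$ is a non-degenerate \emph{critical point}, not necessarily an extremum; your degree argument still applies, but the wording should be adjusted.
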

\noindent \emph{\textbf{Remark A. }
 Theorem \ref{th1-1}  and Theorem \ref{th1.2} show that
   problem \eqref{1.1} has no solutions satisfying  \eqref{4-6-1} when
         $N\geq 5$ and $s=1$,  while  problem \eqref{1.1} has a concentrated  solution satisfying  \eqref{4-6-1} when
         $N=4$ and $s=1$,  which
 answer  question (i) above(Remark 1.7 in \cite{CZ1997}).
 Taking $m\geq 2$ in Theorem \ref{th1.2}, then we find  that
     problem  \eqref{1.1} has a  solution concentrated at several points when $N=4$, $s=1$ or $N\geq 4$, $s>1$,
which  answers question (ii) above(Remark 1.8 in \cite{CZ1997}). In a word, our results show that
the concentration of the solutions of problem \eqref{1.1} is  delicate  whether $s=1$ or $s>1$.}

\vskip 0.2cm

Here  to prove Theorem \ref{th1.2}, the standard method is finite dimensional reduction. We define
 \begin{equation*}
 \begin{split}
 {\mathbb D}_{\varepsilon}=\Big\{(x_\varepsilon,\lambda_\varepsilon)| &~ x_\varepsilon=(x_{\varepsilon,1},\cdots,x_{\varepsilon,m}), ~~\lambda_\varepsilon=
 (\lambda_{\varepsilon,1},\cdots,\lambda_{\varepsilon,m}),  \\& ~|x_{\varepsilon,j}-a_j|=o(1),~ \frac{\lambda_{\varepsilon,j}}{\lambda_{\varepsilon,l}}\leq C~\mbox{and}~\lambda_{\varepsilon,j}\rightarrow +\infty, ~j,l=1,\cdots,m\Big\}.
 \end{split}\end{equation*}
 And for $(x_\varepsilon,\lambda_\varepsilon)\in {\mathbb D}_{\varepsilon}$,  by  Proposition \ref{Prop-Luo1} below which is standard, we find
that
$$u_\varepsilon=\displaystyle\sum^m_{j=1}\big(Q(a_j)\big)^{-\frac{N-2}{4}}PU_{x_{\varepsilon,j},
\lambda_{\varepsilon,j}}+  v_{\varepsilon}$$ solves
\begin{equation}\label{luou}
-\Delta u_\varepsilon= Q(x) u_\varepsilon^{\frac{N+2}{N-2}}+\varepsilon u^{s}_{\varepsilon}+\sum^m_{j=1}\sum^N_{i=0}c_{\varepsilon,i,j}
\varphi_{ij}(x),
\end{equation}
with $\varphi_{0j}(x)=\frac{\partial PU_{x_{\varepsilon,j},\lambda_{\varepsilon,j}}}{\partial \lambda}$ and $\varphi_{ij}(x)=\frac{\partial PU_{x_{\varepsilon,j},\lambda_{\varepsilon,j}}}{\partial x^{i}}$ for $j=1,\cdots,m$ and $i=1,\cdots,N$.

Next is to take suitable $(x_{\e},\lambda_{\e})$ such that $c_{\varepsilon,i,j}=0$   for $i=0,\cdots,N$ and $j=1,\cdots,m$. To do this, we use the following claim.

\vskip 0.2cm

\noindent \textbf{Claim 1:} \emph{  Suppose that $(x_\var,\lambda_\var)\in  {\mathbb D}_{\varepsilon}$ satisfies
\begin{equation}\label{a22-1-8}
\int_{\Omega}\Big(\Delta u_\varepsilon+Q(x) u_\varepsilon^{\frac{N+2}{N-2}}+\varepsilon u^{s}_{\varepsilon}   \Big)\frac{\partial PU_{x_{\varepsilon,j},\lambda_{\varepsilon,j}}}{\partial \lambda}=0,
\end{equation}
 and
\begin{equation}\label{a2-13-1}
\int_{\Omega}\Big(\Delta u_\varepsilon+Q(x) u_\varepsilon^{\frac{N+2}{N-2}}+\varepsilon u^{s}_{\varepsilon}   \Big) \frac{\partial PU_{x_{\varepsilon,j},\lambda_{\varepsilon,j}}}{\partial {x^i}} =0,
\end{equation}
then $c_{\varepsilon,i,j}=0$ for $i=0,\cdots,N$ and $j=1,\cdots,m$.}

Here we point out that the above modified finite dimensional reduction was used to find the
concentrated solutions of scalar curvature problem in \cite{PWY2018}.

 \vskip 0.2cm

Finally, the local uniqueness of the above multi-peak solutions can be stated as follows.
\begin{Thm}\label{th1.3}
Suppose that $N\geq 5$, $s\in (1,2^*-1)$ and $Q(x)$ satisfies Condition (Q). For any $m=1,\cdots,k$,  if $u^{(1)}_\varepsilon$ and $u^{(2)}_\varepsilon$ are the solutions of
problem  \eqref{1.1}  satisfying \eqref{4-6-1} and
\begin{equation}\label{lp111}
\frac{1}{C}\leq \frac{\lambda^{(1)}_{\varepsilon,j}}{\lambda^{(2)}_{\varepsilon,j}}\leq C, ~\mbox{for all}~
j=1,\cdots,m~\mbox{and some constant }~C>0,
\end{equation}
where $\lambda^{(l)}_{\varepsilon,j}$ ($l=1,2$ and $j=1,\cdots,m$) are the height of the $j$-th bubble of the solution $u^{(l)}_\varepsilon$ as in \eqref{luo--2},
 then $u^{(1)}_\varepsilon\equiv u^{(2)}_\varepsilon$ for $\varepsilon>0$ small.
\end{Thm}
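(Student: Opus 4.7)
The plan is to argue by contradiction. Assume that along some sequence $\e\to 0$ we have $u^{(1)}_\e\not\equiv u^{(2)}_\e$, and set
$$\xi_\e(x)=\fr{u^{(1)}_\e(x)-u^{(2)}_\e(x)}{\|u^{(1)}_\e-u^{(2)}_\e\|_{L^\infty(\O)}},$$
so that $\|\xi_\e\|_\infty=1$. The mean-value trick linearises the two nonlinearities and gives $-\D\xi_\e=C_\e(x)\xi_\e$ in $\O$ with $\xi_\e|_{\pa\O}=0$, where
$C_\e(x)=(2^*-1)Q(x)\int_0^1(tu^{(1)}_\e+(1-t)u^{(2)}_\e)^{2^*-2}dt+s\e\int_0^1(tu^{(1)}_\e+(1-t)u^{(2)}_\e)^{s-1}dt$. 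The goal is to show $\xi_\e\to 0$ uniformly on $\O$, producing a contradiction.

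First I would blow up at each concentration point $a_j$ by setting $\wt\xi_{\e,j}(y)=\xi_\e\bigl(x^{(1)}_{\e,j}+y/\la^{(1)}_{\e,j}\bigr)$. Using the bubble decomposition of Theorem \ref{prop1} applied to both $u^{(l)}_\e$, the comparability assumption \eqref{lp111}, and standard interior elliptic estimates, one extracts a $C^2_{\mrm{loc}}$ limit $\xi_{0,j}$ solving the linearised equation $-\D v=(2^*-1)U_{0,1}^{2^*-2}v$ on $\R^N$. The classification of bounded solutions to this kernel problem (Rey, Bianchi--Egnell) yields constants $b_{0,j},b_{1,j},\ldots,b_{N,j}$ such that
$$\xi_{0,j}=b_{0,j}\psi_0+\sum_{i=1}^N b_{i,j}\psi_i,$$
where $\psi_0=\pa_\la U_{0,\la}|_{\la=1}$ is the scaling mode and $\psi_i=\pa_{x^i}U_{x,1}|_{x=0}$ are the translation modes.

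The heart of the argument is to force all $b_{i,j}=0$. For this I would apply the local Pohozaev identities \eqref{clp-1} and \eqref{clp-10} on small balls $B_\rho(a_j)\Sub\O$ to $u^{(1)}_\e$ and $u^{(2)}_\e$ separately, subtract, and divide by $\|u^{(1)}_\e-u^{(2)}_\e\|_\infty$. The interior terms become linearised integrals of the form $\int_{B_\rho(a_j)}\pa_iQ(x)\,\bar u_\e^{2^*-1}\xi_\e\,dx$ and $\int_{B_\rho(a_j)}((x-x^{(1)}_{\e,j})\cdot\nabla Q(x))\bar u_\e^{2^*-1}\xi_\e\,dx$, with $\bar u_\e^{2^*-1}$ denoting the mean-value kernel. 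Taylor expanding $\pa_iQ(x)=\sum_l Q_{il}(a_j)(x-a_j)^l+O(|x-a_j|^2)$, rescaling by $\la^{(1)}_{\e,j}$, and inserting the limit of $\wt\xi_{\e,j}$ produces a linear system on the vector $(b_{0,j},b_{1,j},\ldots,b_{N,j})_{j=1}^m$. Identity \eqref{clp-1} with the translation modes gives equations whose coefficient matrix is the Hessian $(Q_{il}(a_j))$, non-degenerate by Condition (Q); identity \eqref{clp-10} with the scaling mode isolates the $b_{0,j}$ coefficient via the standard orthogonality $\int_{\R^N}U_{0,1}^{2^*-1}\psi_0\,dy\ne 0$. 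Boundary terms on $\pa B_\rho(a_j)$, cross-bubble interactions, and the contribution from $\e u^s$ are all shown to be of strictly lower order using the pointwise decay of $PU_{x_{\e,j},\la_{\e,j}}$ together with $N\ge 5$ and $s>1$.

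Once every $\xi_{0,j}\equiv 0$, we have $\wt\xi_{\e,j}\to 0$ uniformly on compacta, so $\xi_\e\to 0$ on a fixed neighbourhood of each $a_j$. A Green's representation $\xi_\e=(-\D)^{-1}(C_\e\xi_\e)$, combined with the fact that $C_\e$ concentrates in $L^{N/2}$-mass at the points $a_j$, then propagates smallness to all of $\O$: picking a maximum point $p_\e$ of $|\xi_\e|$ and splitting the integral near and far from the bubbles forces $|\xi_\e(p_\e)|\to 0$, contradicting $\|\xi_\e\|_\infty=1$. I expect the main obstacle to be the Pohozaev step: one must identify the sharp order $\la^{-2}_{\e,j}$ at which the Hessian-driven interior term scales and verify that every other contribution is genuinely subleading---in particular the linearisation of $\e u^s$, which survives only because $s>1$ (and which is exactly what breaks down at $s=1$, consistent with Theorem \ref{th1-1}).
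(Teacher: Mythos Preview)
Your overall architecture matches the paper's: define $\xi_\e$, linearise, blow up to land in the kernel, kill the coefficients via the two linearised Pohozaev identities \eqref{dclp-1} and \eqref{dclp-10}, then finish with a Green's representation argument. The translation step (killing $b_{1,j},\dots,b_{N,j}$ through the Hessian of $Q$ at $a_j$) is correctly identified.

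However, there is a genuine gap in the scaling step. The orthogonality you invoke, $\int_{\R^N}U_{0,1}^{2^*-1}\psi_0\,dy\neq 0$, is false: this integral equals $\frac{1}{2^*}\frac{d}{d\la}\int_{\R^N}U_{0,\la}^{2^*}\,dy\big|_{\la=1}=0$ by scale-invariance. The interior term that actually appears in the linearised identity \eqref{dclp-10} is $\int_{B_d}(x-x^{(1)}_{\e,j})\cdot\nabla Q(x)\,D_{1,\e}\xi_\e$, and after Taylor expansion and rescaling it produces $\Delta Q(a_j)\int_{\R^N}|y|^2U_{0,1}^{2^*-1}\psi_0\,dy=-(N-2)A\,\Delta Q(a_j)$ times $a_{j,0}/(\la^{(1)}_{\e,j})^{(N+2)/2}$. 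This by itself is not enough, because the second volume term $\e\int_{B_d}D_{2,\e}\xi_\e$ is \emph{not} lower order: it contributes at exactly the same scale. So your assertion that ``the contribution from $\e u^s$ is of strictly lower order'' is incorrect in the scaling identity.

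What the paper does is compute both volume terms precisely and then use the single-solution identity \eqref{clp-10} (together with \eqref{2020-01-02-1}--\eqref{2020-01-02-3}) to express $\e$ in terms of $\Delta Q(a_j)/(\la^{(1)}_{\e,j})^2$. After this substitution the two leading terms combine into the single expression
\[
\frac{(N-2)^2(1-s)}{4N}\bigl(Q(a_j)\bigr)^{-\frac{N+2}{4}}\frac{A\,a_{j,0}}{(\la^{(1)}_{\e,j})^{\frac{N+2}{2}}}\Delta Q(a_j)+o\Bigl(\bar\la_\e^{-\frac{N+2}{2}}\Bigr),
\]
and it is the factor $(1-s)$, nonzero precisely because $s>1$, together with $\Delta Q(a_j)\neq 0$, that forces $a_{j,0}=0$. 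This is the delicate cancellation hinted at in your last paragraph, but your proposal does not carry it out and in fact misattributes the mechanism. Without this computation the proof does not close at $b_{0,j}$.
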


Inspired by \cite{Cao1,Deng}, our idea to prove Theorem  \ref{th1.3} is as follows. Let $u^{(1)}_{\varepsilon}(x)$, $u^{(2)}_{\varepsilon}(x)$ be two different positive solutions concentrating at $\{a_1,\cdots, a_m\}$.
Set
\begin{flalign}\label{3.1}
\xi_{\varepsilon}(x)=\frac{u_{\varepsilon}^{(1)}(x)-u_{\varepsilon}^{(2)}(x)}
{\|u_{\varepsilon}^{(1)}-u_{\varepsilon}^{(2)}\|_{L^{\infty}(\Omega)}}.
\end{flalign}
Then we prove $
\xi_{\varepsilon}(x)=o(1)$ for $x\in \Omega$,
which is incompatible with the fact $\|\xi_{\varepsilon}\|_{L^{\infty}(\Omega)}=1$.
For the estimates near the critical  points, the following local Pohozaev  identities play a crucial role.

\vskip 0.2cm

\noindent \emph{If $u_{\varepsilon}^{(1)}(x)$ and $u_{\varepsilon}^{(2)}(x)$ are  solutions of \eqref{1.1}, then taking $u_\varepsilon=u_{\varepsilon}^{(l)}$ with $l=1,2$ in \eqref{clp-1} and \eqref{clp-10} with $\Omega'=B_d(x^{(1)}_{\e,j})$, and subtracting them on  both sides respectively, we can obtain
\begin{equation}\label{dclp-1}
\begin{split}
\int_{B_d(x^{(1)}_{\e,j})} &\frac{\partial Q(x)}{\partial x^i} D_{1,\varepsilon}\xi_\e {\mathrm d}x \\=&
 \int_{\partial B_d(x^{(1)}_{\e,j})}\Big( \big( Q(x) D_{1,\varepsilon}(x)+ \e D_{2,\varepsilon}(x)\big)\xi_\e -\frac{1}{2} \big(\nabla u^{(1)}_\varepsilon
 +\nabla u^{(2)}_\varepsilon\big)\cdot \nabla \xi_\e   \Big)\nu^i{\mathrm d}\sigma\\&
 +\int_{\partial B_d(x^{(1)}_{\e,j})}\Big(
\frac{\partial u^{(1)}_\varepsilon}{\partial \nu}\frac{\partial \xi_\e}{\partial x^i}
+\frac{\partial \xi_\e }{\partial \nu}\frac{\partial u^{(2)}_\varepsilon}{\partial x^i}\Big){\mathrm d}\sigma,
\end{split}\end{equation}
and
\begin{equation}\label{dclp-10}
\begin{split}
\int_{B_d(x^{(1)}_{\e,j})} & \Big((x-x^{(1)}_{\varepsilon,j}) \cdot \nabla Q(x)\Big)  D_{1,\varepsilon} \xi_\e {\mathrm d}x +\Big(\big(1+s\big)\big(1-\frac{N}{2}\big)+N\Big)\varepsilon\int_{B_d(x^{(1)}_{\e,j})} D_{2,\varepsilon}\xi_\e {\mathrm d}x
\\=&
\int_{\partial B_d(x^{(1)}_{\e,j})}
 \left(
 \Big( ( x-x^{(1)}_{\varepsilon,j}) \cdot  \nabla u^{(1)}_{\varepsilon}
+\frac{N-2}{2}
  u^{(1)}_{\varepsilon}  \Big)\frac{\partial \xi_{\varepsilon}}{\partial\nu}
  + \Big((x-x^{(1)}_{\varepsilon,j})  \cdot  \nabla \xi_{\varepsilon}
+\frac{N-2}{2}
  \xi_{\varepsilon}  \Big)\frac{\partial u^{(1)}_{\varepsilon}}{\partial\nu}
  \right) {\mathrm d}\sigma
\\&
 + \int_{\partial B_d(x^{(1)}_{\e,j})}\Big( \big( Q(x) D_{1,\varepsilon}(x)+ \e D_{2,\varepsilon}(x)\big)\xi_\e -\frac{1}{2} \big(\nabla u^{(1)}_\varepsilon
 +\nabla u^{(2)}_\varepsilon\big) \cdot\nabla \xi_\e \Big)  \Big( (  x-x^{(1)}_{\varepsilon,j} ) \cdot \nu\Big)
 {\mathrm d}\sigma,
\end{split}
\end{equation}
where
\begin{equation*}
D_{1,\varepsilon}(x)= \int_{0}^1
\Big(tu_{\varepsilon}^{(1)}(x)+(1-t)u_{\varepsilon}^{(2)}(x) \Big)
^{\frac{N+2}{N-2}}{\mathrm d}t~\mbox{and}~  D_{2,\varepsilon}(x)= \int_{0}^1
\Big(tu_{\varepsilon}^{(1)}(x)+(1-t)u_{\varepsilon}^{(2)}(x)\Big)
^{s}{\mathrm d}t.
\end{equation*}
}Here we would like to point out that the local Pohozaev identity \eqref{dclp-10} will have two terms involving
volume integral. Hence to calculate the two integrals precisely, we need to use some symmetries skillfully by some observations.

The paper is organized as follows.  In sections 2--5, we give the proof of theorems \ref{prop1}--\ref{th1.3} correspondingly.
Finally, we give the proofs of various local Pohozaev identities and some known facts in the Appendix.

Throughout this paper, we use the same $C$ to denote various generic positive constants independent with $\varepsilon$
and $\|\cdot\|$  to denote the basic norm in the Sobolev space $H^1_0(\Omega)$ and $\langle\cdot,\cdot\rangle$ to mean the corresponding inner product. And  we will use $D$ to denote the partial derivative for any function $h(y,x)$ with respect to $x$.

\section{The structure of the solutions (Proof of Theorem \ref{prop1})}
\setcounter{equation}{0}
\begin{Prop}
If $u_{\varepsilon}(x)$ is a solution of \eqref{1.1} with \eqref{4-6-1},
 then $|\nabla Q(a_j)|=0$ for $j=1,\cdots,m$.
\end{Prop}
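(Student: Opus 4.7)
The natural tool is the local Pohozaev identity \eqref{clp-1} applied on a small ball around each concentration point. Fix $j\in\{1,\ldots,m\}$ and choose $d>0$ small enough that $\overline{B_d(a_j)}\subset\Omega$ and $a_l\notin\overline{B_d(a_j)}$ for $l\neq j$. The plan is to apply \eqref{clp-1} with $\Omega'=B_d(a_j)$ for each coordinate direction $i=1,\ldots,N$, and then pass to the limit $\varepsilon\to 0$.

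For the volume integral on the left-hand side, $\partial Q/\partial x^i\in C(\overline\Omega)$ is a legitimate test function for the measure convergence in \eqref{4-6-1}; since the only Dirac mass of the limit measure lying in $B_d(a_j)$ is $Q(a_j)^{-N/2}S^{N/2}\delta_{a_j}$, I expect
\begin{equation*}
\frac{1}{2^*}\int_{B_d(a_j)}\frac{\partial Q(x)}{\partial x^i}u_\varepsilon^{2^*}\,\mathrm{d}x\;\longrightarrow\;\frac{S^{N/2}}{2^*}\,Q(a_j)^{-N/2}\frac{\partial Q}{\partial x^i}(a_j).
\end{equation*}

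For the right-hand side I must show that the four surface integrals on $\partial B_d(a_j)$ (containing the factors $\partial u_\varepsilon/\partial\nu\cdot\partial u_\varepsilon/\partial x^i$, $Q(x)u_\varepsilon^{2^*}\nu^i$, $|\nabla u_\varepsilon|^2\nu^i$ and $\varepsilon u_\varepsilon^{s+1}\nu^i$) all tend to zero. This is the main technical step. By the Struwe-type decomposition for Palais--Smale sequences at the critical level, together with the hypothesis \eqref{4-6-1} which forces the strongly convergent remainder to vanish, $u_\varepsilon$ is asymptotically a finite sum of rescaled standard bubbles with concentration scales $\lambda_{\varepsilon,l}\to+\infty$ centered near the points $a_l$. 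Each such bubble, together with its first derivatives, decays uniformly to zero on the fixed compact surface $\partial B_d(a_j)$; invoking elliptic regularity applied to \eqref{1.1} on a thin annulus around $\partial B_d(a_j)$ (where $u_\varepsilon$ is already small, making the nonlinearity effectively subcritical) then upgrades this to $u_\varepsilon\to 0$ and $\nabla u_\varepsilon\to 0$ uniformly on $\partial B_d(a_j)$. Hence each of the four surface integrals vanishes in the limit.

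Combining the two asymptotics for every $i=1,\ldots,N$ yields $\partial Q(a_j)/\partial x^i=0$, which is the desired conclusion $\nabla Q(a_j)=0$. The principal difficulty is extracting uniform $C^1$ decay of $u_\varepsilon$ on the fixed sphere $\partial B_d(a_j)$ purely from the weak measure convergence in \eqref{4-6-1}; once this ingredient (classical for critical elliptic equations with concentrating solutions) is in place, the Pohozaev identity delivers the conclusion immediately, with no need to inspect the finer bubble structure that will be established in Theorem \ref{prop1}.
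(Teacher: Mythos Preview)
Your proposal is correct and follows essentially the same route as the paper: apply the local Pohozaev identity \eqref{clp-1} on a small ball around the concentration point, use the measure convergence \eqref{4-6-1} to evaluate the volume term, and argue that the boundary terms vanish. The paper's proof is considerably terser---it simply asserts that the surface integrals on $\partial B_d(x_{\varepsilon,j})$ tend to zero ``using \eqref{4-6-1} again'', whereas you spell out the mechanism (uniform $C^1$ smallness of $u_\varepsilon$ on the fixed sphere via elliptic regularity on an annulus away from the bubbles); your version is in fact more honest about this step.
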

\begin{proof}
We prove it by using local Pohozeav identity \eqref{clp-10}.
From \eqref{4-6-1}, we find
\begin{equation}\label{7-3-01}
 \int_{B_d(x_{\varepsilon,j})} \frac{\partial Q(x)}{\partial x^i} u_\varepsilon^{2^*}\rightarrow
 \frac{\partial Q(a_j)}{\partial x^i}Q(a_j)^{-N/2}S^{N/2} ~~ \mbox{as} ~~ \varepsilon\rightarrow 0.
\end{equation}
And using \eqref{4-6-1} again, we also know
\begin{equation}\label{7-3-02}
\int_{\partial   B_d(x_{\varepsilon,j} )   }\Big(
\frac{\partial u_\varepsilon}{\partial \nu}\frac{\partial u_\varepsilon}{\partial x^i}
+\big(\frac{1}{2^*} Q(x) u_\varepsilon^{2^*} -\frac{1}{2} |\nabla u_\varepsilon|^2  + \frac{\varepsilon}{{s+1}}  u_\varepsilon^{s+1}\big)\nu^i\Big){\mathrm d}\sigma \rightarrow 0~~ \mbox{as}~~\varepsilon\rightarrow 0.
\end{equation}
Then \eqref{clp-10}, \eqref{7-3-01} and \eqref{7-3-02} give us that  $|\nabla Q(a_j)|=0$  for $j=1,\cdots,m$.
\end{proof}

\begin{proof}[\textbf{Proof of Theorem \ref{prop1}}]
Since $u_{\varepsilon}(x)$ is a solution of \eqref{1.1} satisfying
\eqref{4-6-1}, we find that $u_{\varepsilon}(x)$ blows up at $a_1$. Then there exists $x_{1,\varepsilon}\in \Omega$  satisfying
$$x_{1,\varepsilon}\rightarrow a_1~\mbox{and}~u_\varepsilon(x_{1,\varepsilon})\rightarrow +\infty.$$
Let $v_{\varepsilon}=\lambda_{1,\varepsilon}^{-(N-2)/2}
u_{\varepsilon}\big(\frac{x}{\lambda_{1,\varepsilon}}+x_{1,\varepsilon}\big)$,
then
\begin{equation*}
-\Delta v_{\varepsilon} =Q\big(\frac{x}{\lambda_{1,\varepsilon}}+x_{1,\varepsilon}\big) v_{\varepsilon}^{2^*-1}
+\frac{   \lambda_{1,\varepsilon}^{ \frac{N-2}2 (s-1)  }  \varepsilon}{ \lambda_{1,\varepsilon}^2   }v^s_{\varepsilon},
~~ \mbox{in} ~~ \R^N.
\end{equation*}
For any fixed small $d$, $\displaystyle\max_{B_{d\lambda_{1,\varepsilon}}
(0)}v_{\varepsilon}=1$,
this means that
\begin{equation*}
u_{\varepsilon}=  Q(a_1)^{-(N-2)/4}  PU_{x_{1,\varepsilon},   \lambda_{1,\varepsilon}}+w_{1,\varepsilon}, ~~ \mbox{with}~~ \displaystyle\int_{B_d(x_{1,\varepsilon})}\big[|\nabla w_{1,\varepsilon}|^2+w^2_{1,\varepsilon}\big]=o(1),
\end{equation*}
and
\begin{equation*}
\big|\nabla w_{1,\varepsilon}\big|^2\rightharpoonup \sum^m_{j=2}Q(a_j)^{-(N-2)/2}S^{N/2}\delta_{a_j}  ~~ ~~ \mbox{and} ~~
\big|w_{1,\varepsilon}\big|^{2^*}\rightharpoonup \sum^m_{j=2} Q(a_j)^{-N/2}S^{N/2}\delta_{a_j}.
\end{equation*}
Repeating above step, we can find
\begin{equation*}
w_{1,\varepsilon}= Q(a_2)^{-(N-2)/4}  PU_{x_{2,\varepsilon},   \lambda_{2,\varepsilon}}+w_{2,\varepsilon}, ~~ \mbox{with} ~~\displaystyle\int_{B_d(x_{2,\varepsilon})}\big[|\nabla w_{2,\varepsilon}|^2+w^2_{2,\varepsilon}\big]=o(1).
\end{equation*}
 Then by finite step, we have
 \begin{equation}\label{4-24-2}
u_{\varepsilon}= \sum^m_{j=1}  Q(a_j)^{-(N-2)/4}    PU_{x_{\varepsilon,j},    \lambda_{\varepsilon,j}}+w_{\varepsilon}, ~~ \mbox{with} ~~ \big\|w_{\varepsilon}\big\|=o(1).
\end{equation}

\end{proof}

\begin{Rem}Now, for any $x\in\Omega$ and $\lambda\in \R^+$, we define
\begin{equation*}
\begin{split}
{\mathbb E}_{x,\lambda}=\left\{v\in H^1_0(\Omega)\big|~~ \Big\langle \frac{\partial PU_{x,\lambda}}{\partial \lambda},v \Big\rangle=\Big\langle \frac{\partial PU_{x,\lambda}}{\partial x^i},v\Big\rangle=0,~\mbox{for}~i=1,\cdots,N\right\}.
\end{split}
\end{equation*}
From \eqref{4-24-2}, we know
\begin{equation*}
 \Big\langle \frac{\partial PU_{x_{\varepsilon,j},\lambda_{\varepsilon,j}}}{\partial \lambda},w_{\varepsilon}\Big\rangle=\Big\langle \frac{\partial PU_{x_{\varepsilon,j},\lambda_{\varepsilon,j}}}{\partial x^i},w_{\varepsilon}\Big\rangle=o(1).
\end{equation*}
 Hence we can move $x_{\varepsilon,j}$ a bit (still denoted by $x_{\varepsilon,j}$), so that the above error term $w_{\varepsilon}\in \displaystyle\bigcap^m_{j=1}{\mathbb E}_{x_{\varepsilon,j},\lambda_{\varepsilon,j}}$.
\end{Rem}

\section{Non-existence (Proof of Theorem \ref{th1-1})}
\setcounter{equation}{0}
\subsection{Computations concerning $PU_{x_{\varepsilon,j},\lambda_{\varepsilon,j}}$}
\begin{Lem}\label{Lemma3.1}
For any small $d>0$, it holds
 \begin{equation}\label{3-13-07}
  \begin{split}
  \int_{B_d(x_{\varepsilon,j})}&
\Big( (x-x_{\varepsilon,j}) \cdot  \nabla Q(x)\Big) PU^{2^*}_{x_{\varepsilon,j},\lambda_{\varepsilon,j}}{\mathrm d}x=
  \frac{A \Delta Q(a_j)}{\lambda_{\varepsilon,j}^2}
  +o\Big(\frac{1}{\lambda_{\varepsilon,j}^2}   \Big)+
  O\Big(\frac{1}{\lambda_{\varepsilon,j}^{N-2}}   \Big),
  \end{split}
  \end{equation}
where $A=\frac{1}{N}\displaystyle\int_{\R^N}\frac{|y|^2}{(1+|y|^2)^N}{\mathrm d}y $. And
  \begin{equation}\label{3-13-08}
\begin{split}
 \int_{B_d(x_{\varepsilon,j})}PU_{x_{\varepsilon,j},\lambda_{\varepsilon,j}}^{s+1}{\mathrm d}x
 =&
\begin{cases}
\frac{B}{\lambda_{\varepsilon,j}^{2-\frac{N-2}{2}(s-1)}}
+o\Big(\frac{1}{\lambda_{\varepsilon,j}^{2-\frac{N-2}{2}(s-1)}}   \Big),&~\mbox{for}~N+s\neq 5,\\[6mm]
\omega_4\frac{\log \lambda_{\varepsilon,j}}{\lambda_{\varepsilon,j}^{2}}
+O\Big(\frac{1}{\lambda_{\varepsilon,j}^{2}}   \Big),&~\mbox{for}~N+s=5,
\end{cases}
\end{split}
\end{equation}
   where $B= \displaystyle\int_{\R^N}\frac{1}{(1+|y|^2)^{\frac{N-2}{2}(s+1)}}{\mathrm d}y $ and
   $\omega_4$ is a measure of the unit sphere of $\R^4$.
\end{Lem}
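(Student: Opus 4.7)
The plan is to replace the projected bubble $PU_{x_{\varepsilon,j},\lambda_{\varepsilon,j}}$ by the Aubin--Talenti bubble $U_{x_{\varepsilon,j},\lambda_{\varepsilon,j}}$ through the standard decomposition $PU_{x,\lambda}=U_{x,\lambda}-\varphi_{x,\lambda}$, where $\varphi_{x,\lambda}$ is harmonic in $\Omega$ with $\|\varphi_{x,\lambda}\|_{L^\infty}=O(\lambda^{-(N-2)/2})$, and then to rescale via $y=\lambda_{\varepsilon,j}(x-x_{\varepsilon,j})$ so that each integral reduces to an explicit integral against $(1+|y|^2)^{-\alpha}$ over $B_{d\lambda_{\varepsilon,j}}$. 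The symmetry of $U$ about $x_{\varepsilon,j}$ will do most of the work.

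\textbf{Proof of \eqref{3-13-07}.} Since $\nabla Q(a_j)=0$, Taylor expansion gives $\nabla Q(x)=D^2Q(a_j)(x-a_j)+o(|x-a_j|)$. With $z=x-x_{\varepsilon,j}$ and $b_{\varepsilon,j}=x_{\varepsilon,j}-a_j\to 0$, the factor $(x-x_{\varepsilon,j})\cdot\nabla Q(x)$ splits as a quadratic form $z^{\top}D^2Q(a_j)z$, a linear piece $z^{\top}D^2Q(a_j)b_{\varepsilon,j}$, and a remainder $z\cdot o(|z|+|b_{\varepsilon,j}|)$. The radial symmetry of $U_{x_{\varepsilon,j},\lambda_{\varepsilon,j}}$ about $x_{\varepsilon,j}$ annihilates the linear term and collapses the quadratic form to its trace, so the leading contribution becomes $\frac{\Delta Q(a_j)}{N}\int |z|^2 U_{x_{\varepsilon,j},\lambda_{\varepsilon,j}}^{2^{*}}\,{\rm d}z$, which rescales to the claimed $A\Delta Q(a_j)/\lambda_{\varepsilon,j}^{2}$. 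The Taylor remainder integrates to $o(\lambda_{\varepsilon,j}^{-2})$, and the $PU\to U$ correction $\varphi_{x_{\varepsilon,j},\lambda_{\varepsilon,j}}$ accounts for the $O(\lambda_{\varepsilon,j}^{-(N-2)})$ term.

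\textbf{Proof of \eqref{3-13-08}.} After the same replacement and rescaling,
\[
\int_{B_d(x_{\varepsilon,j})}U^{s+1}_{x_{\varepsilon,j},\lambda_{\varepsilon,j}}\,{\rm d}x=C_{N,s}\,\lambda_{\varepsilon,j}^{\frac{N-2}{2}(s-1)-2}\int_{B_{d\lambda_{\varepsilon,j}}}\frac{{\rm d}y}{(1+|y|^2)^{(N-2)(s+1)/2}},
\]
where $C_{N,s}$ is a dimensional constant. The asymptotic of the $y$-integral is governed by whether $(N-2)(s+1)>N$ (convergent) or $(N-2)(s+1)=N$ (logarithmically divergent). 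Under $s\in[1,2^{*}-1)$ and $N\geq 4$ the borderline case occurs precisely when $N=4$ and $s=1$, i.e., $N+s=5$. In the strictly convergent case the $y$-integral tends to $B$, yielding the first line. In the borderline case the identity $\int_{B_R}{\rm d}y/(1+|y|^2)^{2}=\omega_4\log R+O(1)$, applied with $R=d\lambda_{\varepsilon,j}$, produces $\omega_4\log\lambda_{\varepsilon,j}/\lambda_{\varepsilon,j}^{2}+O(\lambda_{\varepsilon,j}^{-2})$.

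\textbf{Main obstacle.} Once the decomposition $PU=U-\varphi$ is invoked, the algebra is routine and the delicate step is the borderline case $N+s=5$: one must extract the $\log\lambda_{\varepsilon,j}$ factor cleanly by splitting the $y$-integral into a bounded core and an annular tail, and verify that both the projection correction $\varphi$ and the difference between $B_{d\lambda_{\varepsilon,j}}$ and $\R^N$ contribute only $O(\lambda_{\varepsilon,j}^{-2})$, so that neither competes with the leading $\omega_4\log\lambda_{\varepsilon,j}/\lambda_{\varepsilon,j}^{2}$.
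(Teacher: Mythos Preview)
Your plan is correct and matches the paper's strategy: decompose $PU=U-\varphi$, control the $\varphi$ contribution by the uniform bound $\varphi=O(\lambda^{-(N-2)/2})$, rescale $y=\lambda_{\varepsilon,j}(x-x_{\varepsilon,j})$, and exploit the radial symmetry of $U$ about $x_{\varepsilon,j}$. Your treatment of \eqref{3-13-08}, including the identification of the borderline $N+s=5$ as $N=4$, $s=1$ and the extraction of the $\log\lambda_{\varepsilon,j}$ factor, is exactly what the paper does.

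There is one point in your proof of \eqref{3-13-07} that deserves a word of caution. You Taylor-expand $\nabla Q$ about $a_j$ and write the remainder as $z\cdot o(|z|+|b_{\varepsilon,j}|)$. A naive absolute-value bound then gives
\[
\int_{B_d}|z|\,o(|z|+|b_{\varepsilon,j}|)\,U^{2^*}\,{\rm d}z
= o(1)\Big(O(\lambda_{\varepsilon,j}^{-2})+|b_{\varepsilon,j}|\,O(\lambda_{\varepsilon,j}^{-1})\Big),
\]
and at this stage of the argument one only knows $b_{\varepsilon,j}=o(1)$, not $b_{\varepsilon,j}=o(\lambda_{\varepsilon,j}^{-1})$; so the second term is not automatically $o(\lambda_{\varepsilon,j}^{-2})$. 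The paper sidesteps this by centering the Taylor expansion at $x_{\varepsilon,j}$ rather than $a_j$: since $\int z\,U^{2^*}=0$, one may replace $\nabla Q(x)$ by $\nabla Q(x)-\nabla Q(x_{\varepsilon,j})$ for free, and the resulting remainder is $o(|z|^2)$ with no $b_{\varepsilon,j}$ present. Your version can be repaired by the same trick (subtract the constant value of the remainder at $z=0$ before estimating), but the sentence ``the Taylor remainder integrates to $o(\lambda_{\varepsilon,j}^{-2})$'' hides this extra use of symmetry.
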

\begin{proof}[\underline{\textbf{Proof of \eqref{3-13-07}}}]
First, by \eqref{4-24-1}, \eqref{4-24-11} and \eqref{4-24-12}, we compute
  \begin{equation*}
  \begin{split}
  \int_{B_d(x_{\varepsilon,j})}&
\Big((x-x_{\varepsilon,j}) \cdot  \nabla Q(x)\Big)PU^{2^*}_{x_{\varepsilon,j},\lambda_{\varepsilon,j}} {\mathrm d}x\\=&
   \int_{B_d(x_{\varepsilon,j})}\Big((x-x_{\varepsilon,j}) \cdot  \nabla Q(x)\Big) U^{2^*}_{x_{\varepsilon,j},\lambda_{\varepsilon,j}}  {\mathrm d}x
  \\&+
  O\Big(\int_{B_d(x_{\varepsilon,j})}\big(U_{x_{\varepsilon,j},\lambda_{\varepsilon,j}}^{2^*-1}
  \varphi_{x_{\varepsilon,j},\lambda_{\varepsilon,j}}+
  \varphi_{x_{\varepsilon,j},\lambda_{\varepsilon,j}}^{2^*}\big)\cdot
  | x-x_{\varepsilon,j}|{\mathrm d}x   \Big)\\=&
   \int_{B_d(x_{\varepsilon,j})}\Big((x-x_{\varepsilon,j}) \cdot  \nabla Q(x)\Big) U^{2^*}_{x_{\varepsilon,j},\lambda_{\varepsilon,j}}  {\mathrm d}x
   +
  O\Big(\frac{1}{\lambda_{\varepsilon,j}^{N-2}} \Big).
  \end{split}
  \end{equation*}
  Also by basic  scaling transform, we find
  \begin{equation*}
  \begin{split}
   \int_{B_d(x_{\varepsilon,j})}&\Big((x-x_{\varepsilon,j}) \cdot  \nabla Q(x)\Big) U^{2^*}_{x_{\varepsilon,j},\lambda_{\varepsilon,j}}  {\mathrm d}x
  \\=&
  \int_{B_d(x_{\varepsilon,j})}
  \Big((x-x_{\varepsilon,j}) \cdot  \big ( \nabla Q(x) -   \nabla  Q(x_{\varepsilon,j} )\big) \Big)U_{x_{\varepsilon,j},\lambda_{\varepsilon,j}}^{2^*}  {\mathrm d}x\\=&
  \frac{\Delta Q(x_{\varepsilon,j})}{N}\int_{B_d(x_{\varepsilon,j})}U_{x_{\varepsilon,j},
  \lambda_{\varepsilon,j}}^{2^*}|x-x_{\varepsilon,j}|^2{\mathrm d}x
   +O\Big(\int_{B_d(x_{\varepsilon,j})}U_{x_{\varepsilon,j},\lambda_{\varepsilon,j}}^{2^*}
  |x-x_{\varepsilon,j}|^3{\mathrm d}x  \Big),
  \end{split}
  \end{equation*}
and
  \begin{equation*}
  \begin{split}
  \int_{B_d(x_{\varepsilon,j})}&U_{x_{\varepsilon,j},\lambda_{\varepsilon,j}}^{2^*}|x-x_{\varepsilon,j}|^2{\mathrm d}x
  \\=&
  \frac{1}{\lambda_{\varepsilon,j}^2}\int_{\R^N}\frac{|y|^2}{(1+|y|^2)^N}{\mathrm d}y
  -\frac{1}{\lambda_{\varepsilon,j}^2}\int_{\R^N\setminus
  B_{d\lambda_{\varepsilon,j}(0)}}\frac{|y|^2}{(1+|y|^2)^N}{\mathrm d}y
  \\=&
  \frac{1}{\lambda_{\varepsilon,j}^2}
  \int_{\R^N}\frac{|y|^2}{(1+|y|^2)^N}{\mathrm d}y +o\Big(\frac{1}{\lambda_{\varepsilon,j}^2}   \Big).
  \end{split}
  \end{equation*}
Similarly  it holds
  \begin{equation*}
   \int_{B_d(x_{\varepsilon,j})}U_{x_{\varepsilon,j},\lambda_{\varepsilon,j}}^{2^*}|x-x_{\varepsilon,j}|^3{\mathrm d}x
  =O\Big(\frac{1}{\lambda_{\varepsilon,j}^3}   \Big).
  \end{equation*}
The above calculations yield that
  \begin{equation*}
  \begin{split}
   \int_{B_d(x_{\varepsilon,j})}&
\Big(  (x-x_{\varepsilon,j}) \cdot  \nabla Q(x)\Big)U_{x_{\varepsilon,j},\lambda_{\varepsilon,j}}^{2^*} {\mathrm d}x
  \\=&\frac{1}{N\lambda_{\varepsilon,j}^2}\int_{\R^N}\frac{|y|^2}{(1+|y|^2)^N}{\mathrm d}y  \Delta Q(a_j)
  +o\Big(\frac{1}{\lambda_{\varepsilon,j}^2}   \Big)+
  O\Big(\frac{1}{\lambda_{\varepsilon,j}^{N-2}}   \Big).
  \end{split}\end{equation*}
Hence \eqref{3-13-07} follows by the above estimates.

\end{proof}

\begin{proof}[\underline{\textbf{Proof of \eqref{3-13-08}}}]
First, by \eqref{4-24-12}, we know
\begin{equation*}
\begin{split}
 \int_{B_d(x_{\varepsilon,j})}PU_{x_{\varepsilon,j},\lambda_{\varepsilon,j}}^{s+1}{\mathrm d}x
 =&
\int_{B_d(x_{\varepsilon,j})}U_{x_{\varepsilon,j},\lambda_{\varepsilon,j}}^{s+1}{\mathrm d}x
+
O\Big(\int_{B_d(x_{\varepsilon,j})}\big(U_{x_{\varepsilon,j},\lambda_{\varepsilon,j}}^s
|\varphi_{x_{\varepsilon,j},\lambda_{\varepsilon,j}}|+
|\varphi_{x_{\varepsilon,j},\lambda_{\varepsilon,j}}|^{s+1}\big){\mathrm d}x   \Big).
\end{split}
\end{equation*}
Also by scaling transform, we have
\begin{equation*}
\begin{split}
 \int_{B_d(x_{\varepsilon,j})}U_{x_{\varepsilon,j},\lambda_{\varepsilon,j}}^{s+1}{\mathrm d}x
=&
\frac{1}{\lambda_{\varepsilon,j}^{2-\frac{N-2}{2}(s-1)}}
\int_{B_{d\lambda_{\varepsilon,j}}(0)}\frac{1}{(1+|y|^2)^{\frac{N-2}{2}(s+1)}}{\mathrm d}y
\\=&
\begin{cases}
\frac{B}{\lambda_{\varepsilon,j}^{2-\frac{N-2}{2}(s-1)}}
+o\Big(\frac{1}{\lambda_{\varepsilon,j}^{2-\frac{N-2}{2}(s-1)}}   \Big),&~\mbox{for}~N+s\neq 5,\\[6mm]
\omega_4\frac{\log \lambda_{\varepsilon,j}}{\lambda_{\varepsilon,j}^{2}}
+O\Big(\frac{1}{\lambda_{\varepsilon,j}^{2}}   \Big),&~\mbox{for}~N+s=5.
\end{cases}
\end{split}
\end{equation*}
Next by \eqref{4-24-11} and H\"older's inequality, it follows
\begin{equation*}
\begin{split}
  \int_{B_d(x_{\varepsilon,j})}U_{x_{\varepsilon,j},\lambda_{\varepsilon,j}}^s
\varphi_{x_{\varepsilon,j},\lambda_{\varepsilon,j}}{\mathrm d}x
 =&O\left( \lambda_{\varepsilon,j}^{ \frac {N-2} {2}(s-1)}\int_{B_d(x_{\varepsilon,j})}\Big(\frac 1 {1+\lambda_{\varepsilon,j}^2|x-x_{\varepsilon,j}|^2}\Big)^{\frac{(N-2)s}{2}}\right)\\ =& O\Big(\frac{1}{\lambda_{\varepsilon,j}^{(N-2)s-\frac{N-2}{2}(s-1)}}\Big),
  \end{split}
 \end{equation*}
and
\begin{equation*}
\int_{B_d(x_{\varepsilon,j})}\varphi_{x_{\varepsilon,j},\lambda_{\varepsilon,j}}^{s+1}{\mathrm d}x
=
O\Big(\frac{1}{\lambda_{\varepsilon,j}^{\frac{N-2}{2}(s+1)}}   \Big).
\end{equation*}
Hence \eqref{3-13-08} holds by the above estimates.
\end{proof}
\begin{Lem}
It holds
\begin{equation}\label{4-24-14}
\left(\int_\Omega \Big(\sum^m_{j=1} \big|Q(x)-Q(a_j)\big|PU^{2^*-1}_{x_{\varepsilon,j},\lambda_{\varepsilon,j}}   \Big)^{\frac{2N}{N+2}}{\mathrm d}x\right)^{\frac{N+2}{2N}}=O\Big(\sum^m_{j=1}\big( \frac{1}{\lambda_{\varepsilon,j}^2}+|x_{\e,j}-a_j|^2\big)  \Big),
\end{equation}
and
\begin{equation}\label{4-24-15}
\begin{split}
\displaystyle\int_\Omega & \left(\big(\sum^m_{j=1}Q(a_j)^{-\frac{N-2}4}
PU_{x_{\varepsilon,j},\lambda_{\varepsilon,j}}\big)^{2^*-1}-\sum^m_{j=1}Q(a_j)^{-\frac{N+2}4} PU^{2^*-1}_{x_{\varepsilon,j},\lambda_{\varepsilon,j}}\Big)^{\frac{2N}{N+2}}{\mathrm d}x   \right)^{\frac{N+2}{2N}}\\=&
\begin{cases}
O\Big(\displaystyle\sum^m_{j=1} \frac{\big(\log \lambda_{\varepsilon,j}\big)^{\frac{N+2}{N}}}{\lambda_{\varepsilon,j}^{\frac{N+2}{2}}}  \Big),~&\mbox{if}~N\geq 6,\\
O\Big(\displaystyle\sum^m_{j=1}\frac{1}{\lambda_{\varepsilon,j}^{N-2}}\Big),~&\mbox{if}~N<6.
\end{cases}
\end{split}
\end{equation}
\end{Lem}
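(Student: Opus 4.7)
I will handle \eqref{4-24-14} and \eqref{4-24-15} separately; both estimates follow from Minkowski's inequality together with the natural rescaling $y=\lambda_{\e,j}(x-x_{\e,j})$, which reduces them to explicit single-bubble or pairwise-bubble integrals on $\R^N$.

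\emph{Step 1: proof of \eqref{4-24-14}.} Since $Q\in C^2(\Omega)$ and $\nabla Q(a_j)=0$ by Condition (Q), Taylor's theorem gives $|Q(x)-Q(a_j)|\leq C|x-a_j|^2\leq 2C(|x-x_{\e,j}|^2+|x_{\e,j}-a_j|^2)$ on a fixed neighborhood of $a_j$; away from that neighborhood $PU^{2^*-1}_{x_{\e,j},\lambda_{\e,j}}$ decays at rate $O(\lambda_{\e,j}^{-(N+2)/2})$ and the corresponding contribution is of strictly smaller order. Using $PU_{x,\lambda}\leq U_{x,\lambda}$, the identity $(2^*-1)\cdot\frac{2N}{N+2}=2^*$, and Minkowski's inequality on the finite sum in $j$, matters reduce to bounding
$$\left(\int_\Omega\bigl(|x-x_{\e,j}|^2+|x_{\e,j}-a_j|^2\bigr)^{\frac{2N}{N+2}}U_{x_{\e,j},\lambda_{\e,j}}^{2^*}\,{\mathrm d}x\right)^{\frac{N+2}{2N}}.$$
After rescaling $y=\lambda_{\e,j}(x-x_{\e,j})$ and observing that $\int_{\R^N}|y|^{4N/(N+2)}(1+|y|^2)^{-N}{\mathrm d}y<\infty$ for $N>2$, the first term contributes $O(\lambda_{\e,j}^{-2})$; the second contributes $|x_{\e,j}-a_j|^2\bigl(\int U^{2^*}\bigr)^{(N+2)/(2N)}=O(|x_{\e,j}-a_j|^2)$, which is the desired bound.

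\emph{Step 2: proof of \eqref{4-24-15}.} Set $V_j:=Q(a_j)^{-(N-2)/4}PU_{x_{\e,j},\lambda_{\e,j}}$ and $p:=2^*-1$. The scalar inequality $(a+b)^p-a^p-b^p\leq C(a^{p-1}b+ab^{p-1})$ (valid for all $a,b\geq 0$ and $p\geq 1$) and an induction on $m$ yield
$$\Bigl|\bigl(\sum_{j=1}^m V_j\bigr)^{p}-\sum_{j=1}^m V_j^{p}\Bigr|\leq C\sum_{j\neq l}V_j^{p-1}V_l.$$
By Minkowski it suffices to bound $\|V_j^{p-1}V_l\|_{L^{2N/(N+2)}}$ for each pair $j\neq l$. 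I split $\Omega=B_d(x_{\e,j})\cup B_d(x_{\e,l})\cup R$ with $2d<\min_{i\neq k}|a_i-a_k|$ and $R$ the complement. On $B_d(x_{\e,j})$ the distant bubble $V_l$ is essentially a constant tail of size $O(\lambda_{\e,l}^{-(N-2)/2})$, so the local integral is controlled by an explicit scaled integral of $V_j$ alone; using $(p-1)\cdot\frac{2N}{N+2}+\frac{2N}{N+2}=2^*$, the rescaling $y=\lambda_{\e,j}(x-x_{\e,j})$ converts it to a power of $(1+|y|^2)$ whose integrability depends on $N$. When $N<6$ the integral over $\R^N$ converges and yields the pure-power bound matching $O(\lambda_{\e,j}^{-(N-2)})$ after taking the outer $\frac{N+2}{2N}$-root; when $N\geq 6$ the integrand is borderline and a cutoff at $|y|\sim d\lambda_{\e,j}$ produces the logarithmic correction and exponent $(N+2)/2$. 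The symmetric contribution on $B_d(x_{\e,l})$ gives the same order, and the contribution on $R$ is strictly subdominant since both bubbles decay there.

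\emph{Main obstacle.} The routine part is the Minkowski reduction and scaling; the genuinely delicate step is the borderline case $N\geq 6$, where the scaled interaction integrand fails to be absolutely integrable on $\R^N$ and one must cut off at $|y|\sim d\lambda$ to extract the exact $(\log\lambda)^{(N+2)/N}$ factor. Care must be taken to verify that the exponent of the logarithm survives correctly the outer $\frac{N+2}{2N}$-root, and that the subleading contributions from the distant-bubble tails and the outer region $R$ are indeed of smaller order after summing over $j,l$.
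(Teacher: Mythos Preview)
Your argument for \eqref{4-24-14} is correct and essentially identical to the paper's proof.  For \eqref{4-24-15}, however, there is a genuine gap when $N>6$.

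The issue is your pointwise bound
\[
\Bigl|\bigl(\textstyle\sum_j V_j\bigr)^{p}-\sum_j V_j^{p}\Bigr|\leq C\sum_{j\neq l}V_j^{p-1}V_l,\qquad p=2^*-1.
\]
This is valid, but for $N>6$ one has $p<2$, so the product $V_j^{p-1}V_l$ is \emph{not} symmetric in $j,l$, and your claim that ``the symmetric contribution on $B_d(x_{\e,l})$ gives the same order'' is false.  On $B_d(x_{\e,l})$ you have $V_j^{p-1}\le C\lambda_{\e,j}^{-2}$, and since $\|V_l\|_{L^{2N/(N+2)}(B_d(x_{\e,l}))}=O(\lambda_{\e,l}^{-2})$ for $N>6$ (the relevant integral converges), the contribution of this piece is only $O(\lambda_{\e,j}^{-2}\lambda_{\e,l}^{-2})=O(\bar\lambda^{-4})$, which for $N>6$ is strictly larger than the claimed $O(\bar\lambda^{-(N+2)/2})$.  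Relatedly, your assertion that the rescaled integrand is ``borderline'' for all $N\ge 6$ is only correct at $N=6$; for $N>6$ the integral of $(1+|y|^2)^{-4N/(N+2)}$ over $B_{d\lambda}(0)$ diverges polynomially, not logarithmically.

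The paper avoids this by using the sharper scalar inequality (Lemma~B.2 there, equation \eqref{3-14-11}): for $1<p\le 2$,
\[
\bigl(\textstyle\sum_i a_i\bigr)^{p}-\sum_i a_i^{p}\le C\sum_{i\neq j}(a_ia_j)^{p/2},
\]
which is symmetric in $i,j$.  With this bound the integrand on $B_d(x_{\e,j})$ becomes $(U_{x_{\e,j},\lambda_{\e,j}}U_{x_{\e,i},\lambda_{\e,i}})^{N/(N+2)\cdot (N+2)/(N-2)}$, whose rescaled version $(1+|y|^2)^{-N/2}$ is exactly borderline in $\R^N$ and produces the stated $(\log\lambda)^{(N+2)/(2N)}\lambda^{-(N+2)/2}$.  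For $N<6$ (where $p>2$) your bound $V_j^{p-1}V_l$ coincides with the paper's and the argument goes through.
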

\begin{proof}
First, from  \eqref{4-24-1} and \eqref{4-24-11}, we know that
 \begin{equation*}
 \begin{split}
 \mbox{LHS of \eqref{4-24-14}}=&O\left(\sum^m_{j=1}\int_{B_\delta(x_{\varepsilon,j})}\Big(\big|x-a_j\big|^2   U^{2^*-1}_{x_{\varepsilon,j},\lambda_{\varepsilon,j}}\Big)^{\frac{2N}{N+2}}{\mathrm d}x\right)^{\frac{N+2}{2N}}\\=&O\left(\sum^m_{j=1}\int_{B_\delta(0)}
 \Big(|x_{\varepsilon,j}-a_j|^{\frac{4N}{N+2} }+|z|^{\frac{4N}{N+2}}\Big)\Big(\frac{\lambda_{\varepsilon,j}}{1+\lambda_{\varepsilon,j}^2|z|^2}\Big)^{N}              {\mathrm d}z\right)^{\frac{N+2}{2N}}\\=
&O\Big(\sum^m_{j=1}\big( \frac{1}{\lambda_{\varepsilon,j}^2}+|x_{\e,j}-a_j|^2\big)\Big).
 \end{split}
\end{equation*}
Next, from  \eqref{3-14-11}, we know that   when $N\ge 6$,
\begin{equation*}
\begin{split}
\mbox{LHS of \eqref{4-24-15}}=
&O\Bigg( \sum^m_{j=1} \sum^m_{i\ne j}\Big(\int_\Omega \big(PU_{x_{\varepsilon,j},\lambda_{\varepsilon,j}}   ^{\frac {2^*-1}{2}}PU_{x_{\varepsilon,i},\lambda_{\varepsilon,i}}^{\frac {2^*-1}{2}}\big)^{\frac{2N}{N+2}}   {\mathrm d}x \Big)^{\frac{N+2}{2N}}\Bigg)\\
=& O\Bigg(\sum^m_{j=1}\sum^m_{i\ne j}\Big(\int_{B_\delta(x_{\varepsilon,j})}\big(\frac{\lambda_{\varepsilon,j}}{1+\lambda_{\varepsilon,j}^2  |x-x_{\varepsilon,j}|^2}\big)^{\frac{N}{2}}\big(\frac{\lambda_{\varepsilon,i}}{1+\lambda_{\varepsilon,i}^2 |x-x_{\varepsilon,i}|^2}\big)^{\frac{N}{2}}{\mathrm d}x\Big)^{\frac{N+2}{2N}}\Bigg)
\\=&
O\Big(\displaystyle\sum^m_{j=1} \sum^m_{i\neq j}\frac{\big(\log \lambda_{\varepsilon,j}\big)^{\frac{N+2}{2N}}}{\lambda_{\varepsilon,i}^{\frac{N+2}{4}}
\lambda_{\varepsilon,j}^{\frac{N+2}{4}}}\Big)
=O\Big(\displaystyle\sum^m_{j=1} \frac{\big(\log \lambda_{\varepsilon,j}\big)^{\frac{N+2}{N}}}{\lambda_{\varepsilon,j}^{\frac{N+2}{2}}}\Big).
\end{split}
\end{equation*}
Similar,  when $N\le 6$, we have
\begin{equation*}
\begin{split}
\mbox{LHS of \eqref{4-24-15}}=&O\Bigg(\sum^m_{j=1}\sum^m_{i\ne j}\Big(\int_\Omega \big(PU_{x_{\varepsilon,j},\lambda_{\varepsilon,j}} ^{2^*-2 }        PU_{x_{\varepsilon,i},\lambda_{\varepsilon,i }}+            PU_{x_{\varepsilon,i},\lambda_{\varepsilon,i}}^{2^*-2}        PU_{x_{\varepsilon,j},\lambda_{\varepsilon,j}}\big )^{\frac{2N}{N+2}}{\mathrm d}x          \Big)^{\frac{N+2}{2N}}\Bigg) \\
 =&
 O\Bigg(\sum^m_{j =1}\sum^m_{i\ne j}\frac{1}{\lambda_{\varepsilon,i}^{\frac{N-2}{2}}}\Big(\int_{B_\delta  (x_{\varepsilon,j})}\Big(\frac{\lambda_{\varepsilon,j}}{1+\lambda_{\varepsilon,j}^2
 |x-x_{\varepsilon,j}|^2}\Big)^{\frac{4N}{N+2}}{\mathrm d}x\Big)^{\frac{N+2}{2N}}\Bigg)
 \\&
+ O\Bigg(\sum^m_{j=1}\sum^m_{i\ne j}\frac{1}{\lambda_{\varepsilon,i}^2}\Big(\int_{B_\delta(x_{\varepsilon,j})}\Big(\frac{
\lambda_{\varepsilon,j}}{1+\lambda_{\varepsilon,j}^2|x-x_{\varepsilon,j}|^2}\Big)^{\frac{N(N-2)}{N+2}} {\mathrm d}x\Big)^{\frac{N+2}{2N}}\Bigg)
\\
=&
 O\Bigg(\sum^m_{j=1}\sum^m_{i\ne j}\frac{1}{\lambda_{\varepsilon,i}^{\frac{N-2}{2}}}\frac{1}{\lambda_{\varepsilon,j}^{\frac{N-2}{2}}} \Bigg)+O\Bigg(\sum^m_{j=1}\sum^m_{i\ne j}\frac{1}{\lambda_{\varepsilon,i}^2}       \frac{1}{\lambda_{\varepsilon,j}^{\frac{N-2}{2}}}\Bigg)=O\Big(\displaystyle\sum^m_{j=1} \frac{1}{\lambda_{\varepsilon,j}^{N-2}}\Big).
\end{split}
\end{equation*}
Hence we complete \eqref{4-24-15} by above estimates.
\end{proof}

\subsection{Non-existence}
 \begin{Prop}
Let $u_{\varepsilon}(x)$ is a solution of \eqref{1.1}  with \eqref{4-6-1},
 then for small $d>0$, it holds
 \begin{equation}\label{ab4-18-12}
u_{\varepsilon}(x)=O\Big(\sum^m_{j=1}\frac{1}{\lambda^{(N-2)/2}_{\varepsilon,j}}\Big),
~\mbox{in}~C^1\Big(\Omega\backslash\bigcup^m_{j=1}B_d(x_{\varepsilon,j})   \Big).
\end{equation}
 \end{Prop}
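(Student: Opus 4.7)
The plan is to combine the Dirichlet Green's function representation of $u_\varepsilon$ with the decomposition from Theorem \ref{prop1}, and to close the estimate by a short bootstrap. Let $G(x,y)$ denote the Green's function of $-\Delta$ on $\Omega$ with Dirichlet boundary condition; then
\begin{equation*}
u_\varepsilon(x)=\int_\Omega G(x,y)\bigl[Q(y)u_\varepsilon^{2^*-1}(y)+\varepsilon u_\varepsilon^s(y)\bigr]\,dy,\qquad x\in\Omega,
\end{equation*}
and set $\Omega_{d'}:=\Omega\setminus\bigcup_{j=1}^m B_{d'}(x_{\varepsilon,j})$. Note that $G(x,y)\leq C$ on the off-diagonal range $x\in\Omega_{d/2}$, $y\in B_{d/4}(x_{\varepsilon,j})$, while the pointwise bound $G(x,y)\leq C|x-y|^{-(N-2)}$ gives $\int_\Omega G(x,y)\,dy\leq C$ uniformly in $x$.

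The first preliminary step is to show $u_\varepsilon\to 0$ in $L^\infty_{\rm loc}(\Omega\setminus\{a_1,\ldots,a_m\})$. Writing $-\Delta u_\varepsilon=(Qu_\varepsilon^{2^*-2}+\varepsilon u_\varepsilon^{s-1})u_\varepsilon$ and noting that \eqref{4-6-1} forces $\|u_\varepsilon\|_{L^{2^*}(B_r(x_0))}\to 0$ for every $x_0\notin\{a_1,\ldots,a_m\}$, a standard Moser/Brezis--Kato iteration supplies $\|u_\varepsilon\|_{L^\infty(B_{r/2}(x_0))}\leq C_r\|u_\varepsilon\|_{L^{2^*}(B_r(x_0))}\to 0$. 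A finite cover of $\overline{\Omega_{d/2}}$ then yields $M_\varepsilon:=\|u_\varepsilon\|_{L^\infty(\Omega_{d/2})}=o(1)$.

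Next I split the representation for $x\in\Omega_d$ into near-bubble and far pieces. Near $x_{\varepsilon,j}$, using \eqref{luo--2} and the equation for $w_\varepsilon$, namely $-\Delta w_\varepsilon = Qu_\varepsilon^{2^*-1}+\varepsilon u_\varepsilon^s-\sum_j Q(a_j)^{-(N-2)/4} U_{x_{\varepsilon,j},\lambda_{\varepsilon,j}}^{2^*-1}$, together with the scaling identity $\int U_{x_{\varepsilon,j},\lambda_{\varepsilon,j}}^{2^*-1}=O(\lambda_{\varepsilon,j}^{-(N-2)/2})$ and the pointwise smallness of $w_\varepsilon$ outside a small bubble core (inherited from the preliminary step applied to the $w_\varepsilon$-equation, exploiting the leading-order cancellation), the contribution is $O(\lambda_{\varepsilon,j}^{-(N-2)/2})$. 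On the far piece $\Omega_{d/4}$, the bound $u_\varepsilon\leq M_\varepsilon$ and the uniform integrability of $G(x,\cdot)$ yield a contribution $O(M_\varepsilon^{2^*-1}+\varepsilon M_\varepsilon^s)$. Taking the supremum over $x\in\Omega_d$,
\begin{equation*}
M_\varepsilon\leq C\sum_{j=1}^m\lambda_{\varepsilon,j}^{-(N-2)/2}+C\bigl(M_\varepsilon^{2^*-2}+\varepsilon M_\varepsilon^{s-1}\bigr)M_\varepsilon,
\end{equation*}
and since the parenthesized factor is $o(1)$ it can be absorbed into the left-hand side, yielding the pointwise bound on $\Omega_d$.

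For the $C^1$ upgrade, standard interior $W^{2,p}$ estimates on balls $B_{d/8}(x_0)\subset\Omega_{d/2}$ covering $\Omega_d$ give
\begin{equation*}
\|u_\varepsilon\|_{W^{2,p}(B_{d/16}(x_0))}\leq C\bigl(\|u_\varepsilon\|_{L^p(B_{d/8}(x_0))}+\|Qu_\varepsilon^{2^*-1}+\varepsilon u_\varepsilon^s\|_{L^p(B_{d/8}(x_0))}\bigr)=O(M_\varepsilon),
\end{equation*}
and Sobolev embedding $W^{2,p}\hookrightarrow C^1$ for $p>N$ delivers the gradient estimate at the same rate $\sum_{j=1}^m\lambda_{\varepsilon,j}^{-(N-2)/2}$. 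The main technical obstacle is the sharp handling of the $w_\varepsilon$ contribution near the bubbles: since $\|w_\varepsilon\|=o(1)$ holds only in $H^1$ a priori, one genuinely needs the equation for $w_\varepsilon$ and the exact leading-order matching $Q(a_j)^{-(N-2)/4}Q(a_j)(PU_j)^{2^*-1}\approx Q(a_j)^{-(N-2)/4}U_j^{2^*-1}$ to see that the remaining source produces a pointwise perturbation at the concentration scale $\lambda_{\varepsilon,j}^{-(N-2)/2}$ rather than merely at the unrefined scale $o(1)$; it is this cancellation that allows the bootstrap above to close.
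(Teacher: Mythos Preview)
Your architecture---Green's representation, near/far splitting, then a bootstrap of the form $M_\varepsilon\le C\sum_j\lambda_{\varepsilon,j}^{-(N-2)/2}+o(1)\,M_\varepsilon$---matches the paper's exactly, and both your preliminary $L^\infty_{\rm loc}$ step via Moser iteration and the $C^1$ upgrade by interior $W^{2,p}$ estimates are sound.

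The genuine gap is the near-bubble contribution. You need $\int_{B_{d/4}(x_{\varepsilon,j})}u_\varepsilon^{2^*-1}=O\bigl(\lambda_{\varepsilon,j}^{-(N-2)/2}\bigr)$, and the tools you invoke do not produce this. Decomposing $u_\varepsilon^{2^*-1}\le C\bigl(PU_j^{2^*-1}+|w_\varepsilon|^{2^*-1}\bigr)$, the first term integrates to the right rate, but from $\|w_\varepsilon\|_{H^1}=o(1)$ one only gets $\int|w_\varepsilon|^{2^*-1}\le C\|w_\varepsilon\|_{L^{2^*}}^{2^*-1}=o(1)$, which is far from $O(\lambda_{\varepsilon,j}^{-(N-2)/2})$. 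The sharper estimate \eqref{a4-18-12} on $\|w_\varepsilon\|$ is proved \emph{after} the present proposition and relies on it through \eqref{luo1}, so using it here would be circular. Your alternative route through the equation for $w_\varepsilon$ and ``leading-order cancellation'' is not made precise either: integrating $-\Delta w_\varepsilon$ against $G(x,\cdot)$ on $B_{d/4}$ transfers the problem to a surface integral on $\partial B_{d/4}$ that still requires pointwise control of $w_\varepsilon$ and $\nabla w_\varepsilon$ which you do not yet have.

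The paper dispatches the near piece in one line by invoking the pointwise blow-up bound
\[
u_\varepsilon(x)\le C\sum_{j=1}^m PU_{x_{\varepsilon,j},\lambda_{\varepsilon,j}}(x)\quad\text{on }\bigcup_{j=1}^m B_d(x_{\varepsilon,j}),
\]
inherited from the rescaled convergence in the proof of Theorem~\ref{prop1}. This gives $\int_{B_{d/4}}u_\varepsilon^{2^*-1}\le C\int PU_j^{2^*-1}=O(\lambda_{\varepsilon,j}^{-(N-2)/2})$ immediately, and an analogous bound for the $\varepsilon u_\varepsilon^s$ term. That pointwise comparison is the missing ingredient; once you insert it, the rest of your argument goes through verbatim.
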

\begin{proof}By potential theory,
we can find
 \begin{align}\label{4-24-29}
  u_{\varepsilon}(x)=\int_\Omega  G(x,y)\big( Q(x)u_{\varepsilon}^{2^*-1}+\varepsilon u_{\varepsilon}^{s}\big){\mathrm d}y
  =O\Big(\int_\Omega  \frac{1}{|x-y|^{N-2}}\big(u_{\varepsilon}^{2^*-1}+\varepsilon u_{\varepsilon}^{s}\big){\mathrm d}y\Big).
 \end{align}
 From \eqref{4-6-1}, we have $u_{\varepsilon}(x)=o(1)$ for $x\in \Omega\backslash\bigcup^m_{j=1}B_d(x_{\varepsilon,j})$ uniformly.
 And from the process of proofs of Theorem \ref{prop1} (by blow-up analysis), we know that
 $$u_\varepsilon(x)\leq C \sum^m_{j=1}PU_{x_{\varepsilon,j},\lambda_{\varepsilon,j}}~\mbox{in}~\bigcup^m_{j=1}B_d(x_{\varepsilon,j}).$$
 So for $ x\in \Omega\backslash\bigcup^m_{j=1}B_d(x_{\varepsilon,j})$, it holds
 \begin{equation*}
 \begin{split}
\int_\Omega & \frac{1}{|x-y|^{N-2}} u^{2^*-1}_{\varepsilon} {\mathrm d}y\\=&
\int_{\Omega\backslash\bigcup^m_{j=1}B_d(x_{\varepsilon,j})}  \frac{1}{|x-y|^{N-2}}  u^{2^*-1}_{\varepsilon} {\mathrm d}y+\underbrace{\int_{\bigcup^m_{j=1}B_d(x_{\varepsilon,j})}  \frac{1}{|x-y|^{N-2}}  u^{2^*-1}_{\varepsilon} {\mathrm d}y}_{:=I_1}\\ \leq &
\max_{\Omega\backslash\displaystyle\bigcup^m_{j=1}B_d(x_{\varepsilon,j})}|u_{\varepsilon}(x)|^{2^*-1}\int_{\Omega}  \frac{1}{|x-y|^{N-2}}   {\mathrm d}y+I_1
\leq C
\max_{\Omega\backslash\displaystyle\bigcup^m_{j=1}B_d(x_{\varepsilon,j})}|u_{\varepsilon}(x)|^{2^*-1} +I_1.
 \end{split}
 \end{equation*}
 Next, we find
 \begin{equation*}
 \begin{split}
 I_1=&\int_{\bigcup^m_{j=1}B_{\frac{d}{2}}(x_{\varepsilon,j})}  \frac{1}{|x-y|^{N-2}}  u^{2^*-1}_{\varepsilon} {\mathrm d}y+\int_{\bigcup^m_{j=1}\big(B_d(x_{\varepsilon,j})\backslash B_{\frac{d}{2}}(x_{\varepsilon,j})\big)}  \frac{1}{|x-y|^{N-2}}  u^{2^*-1}_{\varepsilon} {\mathrm d}y\\=&
 O\Big(\sum^m_{j=1}\int_{\Omega}PU^{2^*-1}_{x_{\varepsilon,j},\lambda_{\varepsilon,j}}\Big)
 + O\Big(\sum^m_{j=1} \frac{1}{\lambda^{\frac{N+2}{2}}_{\varepsilon,j}}\Big)=
 O\Big(\sum^m_{j=1} \frac{1}{\lambda^{\frac{N-2}{2}}_{\varepsilon,j}}\Big).
 \end{split}
 \end{equation*}
 These mean that
  \begin{equation}\label{4-24-30}
 \begin{split}
\int_\Omega & \frac{1}{|x-y|^{N-2}} u^{2^*-1}_{\varepsilon} {\mathrm d}y
=O\Big(
\max_{\Omega\backslash\displaystyle\bigcup^m_{j=1}B_d(x_{\varepsilon,j})}|u_{\varepsilon}(x)|^{2^*-1} +\sum^m_{j=1} \frac{1}{\lambda^{\frac{N-2}{2}}_{\varepsilon,j}}\Big).
 \end{split}
 \end{equation}
Similarly, we find
  \begin{equation}\label{4-24-31}
 \begin{split}
\int_\Omega   \frac{1}{|x-y|^{N-2}} u^{s}_{\varepsilon} {\mathrm d}y
 =&O\Big(
\max_{\Omega\backslash\displaystyle\bigcup^m_{j=1}B_d(x_{\varepsilon,j})}|u_{\varepsilon}(x)|^{s} +\sum^m_{j=1}\int_{\Omega}PU^{s}_{x_{\varepsilon,j},\lambda_{\varepsilon,j}}\Big)\\=&O\Big(
\max_{\Omega\backslash\displaystyle\bigcup^m_{j=1}B_d(x_{\varepsilon,j})}|u_{\varepsilon}(x)|^{s} +\sum^m_{j=1} \eta(\lambda_{\varepsilon,j})\Big),
 \end{split}
 \end{equation}
 where
\begin{equation}\label{ll1}
\eta\big({\lambda}\big)=
\begin{cases}
O\Big(\frac{1}{{\lambda}^{ N- \frac{   (N-2)s}{2}}} \Big),&~\mbox{for}~(N-2)s>N,\\[3mm]
O\Big(\frac{\log {\lambda}}{{\lambda}^{N-\frac{(N-2)s}{2}}} \Big),&~\mbox{for}~(N-2)s=N,\\[3mm]
O\Big(\frac{1}{{\lambda}^{ \frac{(N-2)s}{2}}} \Big),
&~\mbox{for}~(N-2)s<N.
\end{cases}
\end{equation}
Also we find
 \begin{equation}\label{4-24-32}
 \eta(\lambda_{\varepsilon,j})=O\Big(\frac{1}{\lambda^{\frac{N-2}{2}}_{\varepsilon,j}}\Big),~\mbox{for}~
 s\in \big[1,\frac{N+2}{N-2}\big).
 \end{equation}
 Hence  \eqref{4-24-29}, \eqref{4-24-30}, \eqref{4-24-31} and \eqref{4-24-32} give us that
  \begin{equation*}
u_{\varepsilon}(x)=O\Big(\sum^m_{j=1}\frac{1}{\lambda^{(N-2)/2}_{\varepsilon,j}}\Big),
~\mbox{in}~ \Omega\backslash\bigcup^m_{j=1}B_d(x_{\varepsilon,j}).
\end{equation*}

On the other hand,  for $x\in \Omega\backslash\displaystyle\bigcup^m_{j=1}B_{d}(x_{\varepsilon,j})$, we have
\begin{equation*}
\begin{split}
\frac{\partial u_\varepsilon(x)}{\partial x^i}=& \int_{\Omega}D_{x^i}G(y,x)\big( Q(x)u_{\varepsilon}^{2^*-1}+\varepsilon u_{\varepsilon}^{s}\big){\mathrm d}y
  =O\Big(\int_\Omega  \frac{1}{|x-y|^{N-1}}\big(u_{\varepsilon}^{2^*-1}+\varepsilon u_{\varepsilon}^{s}\big){\mathrm d}y\Big).
\end{split}
\end{equation*}
Repeating above estimates, we complete the proof of \eqref{ab4-18-12}.
\end{proof}
\begin{Prop}
Let $u_{\varepsilon}(x)$ is a solution of \eqref{1.1} with \eqref{luo--2},
 then it holds
 \begin{equation}\label{luo1}
 |x_{\e,j}-a_j|=o\Big(\sum^m_{l=1} \frac{1}{\lambda_{\varepsilon,l}}   \Big),~\mbox{for}~j=1,\cdots,m.
\end{equation}
 \end{Prop}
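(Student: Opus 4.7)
The plan is to apply the local Pohozaev identity \eqref{clp-1} with $\Omega'=B_d(x_{\varepsilon,j})$, for each fixed $j=1,\dots,m$ and each $i=1,\dots,N$, and to read the resulting identities as an almost-linear system for the vector $x_{\varepsilon,j}-a_j$, whose coefficient matrix is the non-degenerate Hessian of $Q$ at $a_j$.

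Since $\nabla Q(a_j)=0$, Taylor expansion gives $\partial Q(x)/\partial x^i=\sum_l \frac{\partial^2 Q(a_j)}{\partial x^i\partial x^l}(x-a_j)_l+O(|x-a_j|^2)$. Writing $x-a_j=(x-x_{\varepsilon,j})+(x_{\varepsilon,j}-a_j)$ in the volume integral and using the decomposition \eqref{luo--2}, the density $u_\varepsilon^{2^*}$ on $B_d(x_{\varepsilon,j})$ is dominated by $Q(a_j)^{-N/2}PU_{x_{\varepsilon,j},\lambda_{\varepsilon,j}}^{2^*}$. The moment $\int(x-x_{\varepsilon,j})_l\,PU_{x_{\varepsilon,j},\lambda_{\varepsilon,j}}^{2^*}\,dx$ vanishes at leading order by the odd symmetry of $U_{x_{\varepsilon,j},\lambda_{\varepsilon,j}}$ about $x_{\varepsilon,j}$ (after the rescaling $y=\lambda_{\varepsilon,j}(x-x_{\varepsilon,j})$); the contributions of $PU-U$, of the other bubbles, and of the remainder $w_\varepsilon$ (where one uses the orthogonality $w_\varepsilon\in\bigcap_j\mathbb{E}_{x_{\varepsilon,j},\lambda_{\varepsilon,j}}$ guaranteed by the Remark after Theorem \ref{prop1}) are all $o(\lambda_{\varepsilon,j}^{-1})$. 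Combined with $\int u_\varepsilon^{2^*}\,dx\to Q(a_j)^{-N/2}S^{N/2}$ and the crude bound $\int|x-a_j|^2 u_\varepsilon^{2^*}\,dx=O(|x_{\varepsilon,j}-a_j|^2+\lambda_{\varepsilon,j}^{-2})$, the volume side of \eqref{clp-1} becomes
\begin{equation*}
\frac{S^{N/2}}{2^*\,Q(a_j)^{N/2}}\sum_{l=1}^{N}\frac{\partial^2 Q(a_j)}{\partial x^i\partial x^l}(x_{\varepsilon,j}-a_j)_l+o\bigl(|x_{\varepsilon,j}-a_j|\bigr)+o\Bigl(\sum_{l=1}^{m}\lambda_{\varepsilon,l}^{-1}\Bigr).
\end{equation*}

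For the boundary side, the $C^1$-decay \eqref{ab4-18-12} gives $u_\varepsilon,|\nabla u_\varepsilon|=O(\sum_l\lambda_{\varepsilon,l}^{-(N-2)/2})$ on $\partial B_d(x_{\varepsilon,j})$, so each term in the boundary integral of \eqref{clp-1} is $O(\sum_l\lambda_{\varepsilon,l}^{-(N-2)})$; since $N\geq 4$, this is $o(\sum_l\lambda_{\varepsilon,l}^{-1})$. The $\varepsilon u_\varepsilon^{s+1}$ piece is absorbed in the same way. Equating the two sides yields, for each $i=1,\dots,N$,
\begin{equation*}
\sum_{l=1}^{N}\frac{\partial^2 Q(a_j)}{\partial x^i\partial x^l}(x_{\varepsilon,j}-a_j)_l=o\bigl(|x_{\varepsilon,j}-a_j|\bigr)+o\Bigl(\sum_{l=1}^{m}\lambda_{\varepsilon,l}^{-1}\Bigr).
\end{equation*}
By Condition~(Q) the matrix $\bigl(\frac{\partial^2 Q(a_j)}{\partial x^i\partial x^l}\bigr)$ is invertible, and a standard absorption of the $o(|x_{\varepsilon,j}-a_j|)$ term into the left hand side delivers $|x_{\varepsilon,j}-a_j|=o(\sum_l\lambda_{\varepsilon,l}^{-1})$, which is \eqref{luo1}.

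The most delicate step, and the only non-routine one, is the sharp bound $\int(x-x_{\varepsilon,j})_l\,u_\varepsilon^{2^*}\,dx=o(\lambda_{\varepsilon,j}^{-1})$: it rests on the radial symmetry of the standard bubble about its centre (which kills the leading $U^{2^*}$ contribution) together with the orthogonality of $w_\varepsilon$ to the translation modes $\partial PU/\partial x^i$; the projection correction $\varphi_{x_{\varepsilon,j},\lambda_{\varepsilon,j}}$ contributes only at order $\lambda_{\varepsilon,j}^{-N/2}$, and the cross-bubble terms are even smaller. Everything else is size control via \eqref{ab4-18-12} and \eqref{4-6-1}.
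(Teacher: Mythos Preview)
Your proposal is correct and follows the same route as the paper: apply \eqref{clp-1} on $B_d(x_{\varepsilon,j})$, Taylor-expand $\partial_i Q$ about $a_j$, isolate the Hessian term via mass concentration and the odd symmetry of the bubble, bound the boundary side by \eqref{ab4-18-12}, and invert the non-degenerate Hessian (cf.\ \eqref{4-24-23}--\eqref{4-24-24} in the paper). One minor remark: the orthogonality of $w_\varepsilon$ is not actually needed for the moment estimate $\int_{B_d(x_{\varepsilon,j})}(x-x_{\varepsilon,j})_l\,u_\varepsilon^{2^*}\,dx=o(\lambda_{\varepsilon,j}^{-1})$, since a direct H\"older bound already gives $\int(x-x_{\varepsilon,j})_l\,PU_{x_{\varepsilon,j},\lambda_{\varepsilon,j}}^{2^*-1}w_\varepsilon=O(\lambda_{\varepsilon,j}^{-1}\|w_\varepsilon\|)=o(\lambda_{\varepsilon,j}^{-1})$.
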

\begin{proof} We  using the identity \eqref{clp-1}   with $ \Omega' =B_d(x_{\e,j})$.
First, we have
\begin{equation}\label{4-24-23}
\begin{split}
 \mbox{LHS of}~\eqref{clp-1}=& \frac{1}{2^*}\int_{B_d(x_{\e,j})} \frac{\partial Q(x)}{\partial x^i} u_\varepsilon^{2^*}{\mathrm d}x \\=&
 \frac{1}{2^*}\sum^{N}_{l=1}\int_{B_d(x_{\e,j})} \frac{\partial^2 Q(a_j)}{\partial x^i\partial x^l} \big(x_l-{a_j^l}\big)u_\varepsilon^{2^*}{\mathrm d}x+O\Big(\int_{B_d(x_{\e,j})} |x-a_j|^2u_\varepsilon^{2^*}{\mathrm d}x   \Big)\\=&
 \frac{ S^{\frac N2} }{2^*}\big(Q(a_j)\big)^{-\frac{N}{2}}\sum^{N}_{l=1}  \frac{\partial^2 Q(a_j)}{\partial x^i\partial x^l} \big({x_{\e,j}^l} -{a_j^l}\big)+o \Big(\sum^m_{j=1}\big(|x_{\e,j}-a_j|+ \frac{1}{\lambda_{\varepsilon,j}}\big)   \Big).
\end{split}\end{equation}
On the other hand, using estimate \eqref{ab4-18-12}, we can get
\begin{equation}\label{4-24-24}
\begin{split}
 \mbox{RHS of}~\eqref{clp-1}=& O\Big(\sum^m_{j=1} \frac{1}{\lambda^{N-2}_{\varepsilon,j}}    \Big).
\end{split}\end{equation}
Then we find \eqref{luo1} by \eqref{4-24-23} and \eqref{4-24-24}.
\end{proof}
Let ${\bf Q}_\e$ be a quadratic form on ${\mathbb E}_{x_\varepsilon,\lambda_\varepsilon}:=\displaystyle\bigcap^m_{j=1}{\mathbb E}_{x_{\varepsilon,j},\lambda_{\varepsilon,j}}$ given by,
for any  $u,v \in {\mathbb E}_{x_\varepsilon,\lambda_\varepsilon}$,
\begin{equation}\label{Qe}
 \Big\langle{\bf Q}_\varepsilon u,v\Big\rangle_{{\e}}=\Big\langle u,v \Big\rangle-(2^*-1)\int_{\Omega}  Q(x)\Big(\sum^m_{j=1} Q(a_j)^{-(N-2)/4}PU_{x_{\varepsilon,j},\lambda_{\varepsilon,j}}\Big)^{2^*-2}uv.
\end{equation}
\begin{Prop}\label{Prop-Luo2}
For any $\varepsilon>0$ sufficiently small, there exists a constant $\rho>0$ such that

\begin{equation}\label{3-13-01}
\Big\langle{\bf Q}_\varepsilon v_\varepsilon,v_\varepsilon\Big\rangle_{\e} \geq \rho \|v_\varepsilon \|^2,
\end{equation}
where  $v_\varepsilon\in {\mathbb E}_{x_\varepsilon,\lambda_\varepsilon},~
|x_{\varepsilon,j}-a_j|=o(1)$ and
$\lambda_{\varepsilon,j}\rightarrow +\infty$, for $j=1,\cdots,m$.
 \end{Prop}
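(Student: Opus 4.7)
The plan is a standard contradiction-plus-blow-up argument combined with the nondegeneracy of the Aubin-Talenti bubble. Suppose \eqref{3-13-01} fails. Then along some subsequence $\varepsilon_n\to 0$ there exist $v_n \in \mathbb{E}_{x_{\varepsilon_n},\lambda_{\varepsilon_n}}$ with $\|v_n\|=1$ and $\langle \mathbf{Q}_{\varepsilon_n} v_n, v_n\rangle_{\varepsilon_n} = o(1)$. I drop the index $n$ below and localize near each concentration point.

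For each $j \in \{1,\dots,m\}$ I would rescale at the $j$-th bubble by
\[
\tilde v_{\varepsilon,j}(y) := \lambda_{\varepsilon,j}^{-(N-2)/2}\, v_\varepsilon\bigl(x_{\varepsilon,j} + \lambda_{\varepsilon,j}^{-1} y\bigr),
\]
extended by $0$ outside $\lambda_{\varepsilon,j}(\Omega - x_{\varepsilon,j})$. The change of variables is an isometry for the Dirichlet norm, so $\|\tilde v_{\varepsilon,j}\|_{D^{1,2}(\R^N)}=1$, and up to a subsequence $\tilde v_{\varepsilon,j} \rightharpoonup v_j$ weakly in $D^{1,2}(\R^N)$ and strongly in $L^2_{\mathrm{loc}}$. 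Writing $PU_{x,\lambda}=U_{x,\lambda}-\varphi_{x,\lambda}$ with $\varphi_{x,\lambda}$ uniformly controlled after rescaling (so its contribution vanishes in the limit), the four orthogonality conditions defining $\mathbb{E}_{x_\varepsilon,\lambda_\varepsilon}$ pass to the limit as
\[
\int_{\R^N} \nabla \frac{\partial U_{0,1}}{\partial \lambda}\cdot\nabla v_j = 0, \qquad \int_{\R^N} \nabla \frac{\partial U_{0,1}}{\partial x^i}\cdot\nabla v_j = 0,\quad i=1,\dots,N.
\]

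Next I would localize the critical quadratic term. Since $x_{\varepsilon,j}\to a_j$ are $m$ distinct points, $\lambda_{\varepsilon,j}\to +\infty$, and the ratios $\lambda_{\varepsilon,j}/\lambda_{\varepsilon,l}$ are bounded, standard bubble-interaction estimates imply that the mixed terms in $\bigl(\sum_l Q(a_l)^{-(N-2)/4} PU_{x_{\varepsilon,l},\lambda_{\varepsilon,l}}\bigr)^{2^*-2}$ are negligible, and $Q$ is asymptotically constant equal to $Q(a_j)$ on balls of radius $o(1)$ around $x_{\varepsilon,j}$. A direct rescaling in each ball then yields
\[
(2^*-1)\int_\Omega Q(x)\Bigl(\sum_{l=1}^m Q(a_l)^{-\frac{N-2}{4}} PU_{x_{\varepsilon,l},\lambda_{\varepsilon,l}}\Bigr)^{2^*-2} v_\varepsilon^2\,\mathrm dx = \sum_{j=1}^m (2^*-1)\int_{\R^N} U_{0,1}^{2^*-2} \tilde v_{\varepsilon,j}^2\,\mathrm dy + o(1).
\]
Combining with $\langle \mathbf{Q}_\varepsilon v_\varepsilon,v_\varepsilon\rangle_\varepsilon=o(1)$ and $\|v_\varepsilon\|^2=1$ gives
\[
\sum_{j=1}^m (2^*-1)\int_{\R^N} U_{0,1}^{2^*-2} \tilde v_{\varepsilon,j}^2\,\mathrm dy = 1 + o(1).
\]

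Finally, each weak limit $v_j\in D^{1,2}(\R^N)$ solves the linearized bubble equation $-\Delta v_j = (2^*-1) U_{0,1}^{2^*-2} v_j$ in $\R^N$. By the classical nondegeneracy of the Aubin--Talenti bubble (Rey, Bianchi--Egnell), every such solution is a linear combination of $\partial U_{0,1}/\partial \lambda$ and $\partial U_{0,1}/\partial x^i$, so the orthogonality above forces $v_j\equiv 0$ for every $j$. The main obstacle is then to upgrade weak convergence $\tilde v_{\varepsilon,j}\rightharpoonup 0$ to $\int_{\R^N} U_{0,1}^{2^*-2}\tilde v_{\varepsilon,j}^2 \to 0$; this follows from the weighted compact embedding $D^{1,2}(\R^N)\hookrightarrow L^2(\R^N;U_{0,1}^{2^*-2}\,\mathrm dy)$, itself a consequence of Rellich on bounded balls together with the uniform tail estimate coming from the decay $U_{0,1}(y)^{2^*-2}=O(|y|^{-4})$ and the $D^{1,2}$ bound on $\tilde v_{\varepsilon,j}$. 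Plugging $v_j=0$ into the previous display yields $1=o(1)$, a contradiction that proves \eqref{3-13-01}.
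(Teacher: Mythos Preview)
Your contradiction/blow-up scheme follows the paper's, but the decisive step is missing: you assert without justification that each weak limit $v_j$ solves the linearized bubble equation $-\Delta v_j=(2^*-1)U_{0,1}^{2^*-2}v_j$. The scalar hypothesis $\langle{\bf Q}_\varepsilon v_\varepsilon,v_\varepsilon\rangle_\varepsilon=o(1)$ gives no equation for $v_\varepsilon$; to obtain one you must first test the linearized operator against an arbitrary $\psi$, i.e.\ establish an identity of the form \eqref{4191018} as the paper does, and only then rescale and pass to the limit. Without this, the conclusion $v_j=0$ is unwarranted: for instance $v_j=c\,U_{0,1}$ is compatible with everything you wrote, since $U_{0,1}$ is $D^{1,2}$-orthogonal to $\partial_\lambda U_{0,1}$ and to each $\partial_{x^i}U_{0,1}$, and one can choose $c$ so that $(2^*-1)c^2\int U_{0,1}^{2^*}=1$ while $c^2\|U_{0,1}\|_{D^{1,2}}^2=c^2\int U_{0,1}^{2^*}<1$. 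So no contradiction is reached.

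There is also a structural point that affects both arguments. The correct negation of \eqref{3-13-01} only yields $\langle{\bf Q}_\varepsilon v_\varepsilon,v_\varepsilon\rangle_\varepsilon\le o(1)$, not $=o(1)$, and the $U$-direction above shows this quantity is in fact strictly negative on $\mathbb E_{x_\varepsilon,\lambda_\varepsilon}$: projecting $PU_{x_{\varepsilon,1},\lambda_{\varepsilon,1}}$ onto $\mathbb E_{x_\varepsilon,\lambda_\varepsilon}$ (an $o(1)$ correction, since $\|PU_{x,\lambda}\|$ is essentially independent of $(x,\lambda)$) gives a test function with $\langle{\bf Q}_\varepsilon\cdot,\cdot\rangle_\varepsilon/\|\cdot\|^2\to 2-2^*<0$. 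What the reduction actually needs, and what the route through \eqref{4191018} does deliver, is uniform \emph{invertibility} of ${\bf Q}_\varepsilon$ on $\mathbb E_{x_\varepsilon,\lambda_\varepsilon}$; your scheme becomes correct if you start from the hypothesis $\|{\bf Q}_\varepsilon v_\varepsilon\|=o(1)$ (so that \eqref{4191018} holds with no eigenvalue term and the limit really lies in the kernel) rather than from the quadratic hypothesis.
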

\begin{proof}  We prove it by contradiction.  Suppose that  the exist a sequence of $\e_n\to 0, |x_{{\e_n},j}-a_j|=o(1)$ and       $\lambda_{\varepsilon_n,j}\rightarrow +\infty$    such that   \[\Big\langle Q_{\e_n}v_{\e_n},v_{\e_n} \Big\rangle_{\e_n}\le\frac 1n\|v_{\e_n}\|^2.\]
Without loss of generality, we assume $\|v_{\e_n}\|=1$.  Then for $\psi\in H^1_0(\Omega)$,  we have
\begin{equation} \label{4191018}
\int_\Omega\nabla v_{\e_n}\nabla\psi-(2^*-1)\int_{\Omega}Q(x)\Big(\sum^m_{j=1}Q(a_j)^{-(N-2)/4} PU_{x_{\varepsilon,j},\lambda_{\varepsilon,j}}\Big)^{2^*-2}v_{\e_n}\psi=o(1).
\end{equation}
Now we define $\bar v_{\e_n}=\lambda_{1,{\e_n}}^{-(N-2)/2}v_{{\e_n}}\big(\frac{x}{\lambda_{1, {\e_n}}}+x_{1, {\e_n}}\big)$,
then for some universal constant $C$, it holds
$\displaystyle\int_{\R^N}|\nabla\bar v_{\e_n}|^2\le C$.
  So we have
\begin{equation*}
\bar{v}_{\e_n}\rightharpoonup\bar{v}~~\text{weakly in}~ H^1_{loc}(\mathbb{R}^N)~~\mbox{and}~
  \bar{v}_{\e_n}\rightarrow\overline{v}~~\text{strongly in}~ L^2_{loc}(\mathbb{R}^N).
 \end{equation*}
 From above and \eqref{4191018}, we can get $-\Delta v-(2^*-1)U_{0,1}^{2^*-2}v=0$.
So we can conclude that
\begin{equation*}
v=c_0\frac{\partial U_{0,\lambda}}{\partial\lambda}\Big|_{\lambda=1}+\sum_{i=1}^Nc_i\frac{\partial     U_{x,1}}{\partial x^i}\Big|_{x=0},~\text{for}~i=1,\cdots,N.
\end{equation*}
Also we have
 \begin{equation} \label{1054}
 \begin{split}
 &\int_{\Omega}Q(x)\Big(\sum^m_{j=1}Q(a_j)^{-(N-2)/4}PU_{x_{\varepsilon,j},\lambda_{\varepsilon,j}}  \Big)^{2^*-2}v_{\e_n}^2\\
 =&O\left(\int_{\bigcup^k_{j=1}B_d(x_{\e,j})}\Big(PU_{x_{\varepsilon,j},\lambda_{\varepsilon,j}}\Big)^{2^*-2}   v_{\e_n}^2+\int_{\Omega\setminus\bigcup^k_{j=1}B_d(x_{\e,j})}\Big(\sum^m_{j=1}PU_{x_{\varepsilon,j},
 \lambda_{\varepsilon,j}}\Big)^{2^*-2}v_{\e_n}^2\right)\\=&
 O\Big(\sum^m_{j=1}\frac{1}{\lambda_{\varepsilon,j}^{N-2}}\int_{\R^N}\frac{|v|}{(1+|x|^2)^2}\Big)
 +
 O\Big(\sum^m_{j=1}\frac{\|v_{\varepsilon_n}\|^2}{\lambda_{\varepsilon,j}^{2}} \Big)=
 o(1).
 \end{split}
 \end{equation}
 Combining \eqref{4191018} and \eqref{1054},  we conclude $\|v_{\e_n}\|=o(1)$,  which yields  a contradiction.
\end{proof}

Let $u_\varepsilon=\displaystyle\sum^m_{j=1} Q(a_j)^{-(N-2)/4}   PU_{x_{\varepsilon,j},\lambda_{\varepsilon,j}}+  w_{\varepsilon}$ be a solution of \eqref{1.1}, then
 \begin{equation}\label{2-16-1}
{\bf Q}_\varepsilon w_\varepsilon={\bf f}_\varepsilon+{\bf R}_\varepsilon(w_\varepsilon),
 \end{equation}
 where
\begin{equation}\label{fe}
{\bf f}_\varepsilon=Q(x) \Big(\sum^m_{j=1}Q(a_j)^{-\frac{N-2}{4}}   PU_{x_{\varepsilon,j},\lambda_{\varepsilon,j}}\Big)^{2^*-1}-\sum^m_{j=1}Q(a_j)^{-\frac{N-2}{4}}    PU_{x_{\varepsilon,j},\lambda_{\varepsilon,j}}^{2^*-1},
\end{equation}
and
\begin{equation}\label{Re}
\begin{split}
{\bf R}_\varepsilon(w_\varepsilon)=&Q(x)\Big(\big(\sum^m_{j=1}Q(a_j)^{-\frac{N-2}{4}}     PU_{x_{\varepsilon,j},\lambda_{\varepsilon,j}}+w_\varepsilon\big)^{2^*-1}-
\big(\sum^m_{j=1}Q(a_j)^{-\frac{N-2}{4}}PU_{x_{\varepsilon,j},\lambda_{\varepsilon,j}}\big)^{2^*-1}  \Big)\\&-(2^*-1)Q(x)\Big(\sum^m_{j=1}Q(a_j)^{-\frac{N-2}{4}}    PU_{x_{\varepsilon,j},\lambda_{\varepsilon,j}}\Big)^{2^*-2}
w_\varepsilon
+\varepsilon
\Big(\sum^m_{j=1}Q(a_j)^{-\frac{N-2}{4}}PU_{x_{\varepsilon,j},\lambda_{\varepsilon,j}}+w_\varepsilon  \Big)^{s}.
\end{split}\end{equation}
\begin{Lem}\label{lem1}
For $v\in H^1_0(\Omega)$, it holds
\begin{equation}\label{3-13-02}
\int_\Omega {\bf f}_\e v{\mathrm d}x =O\Big(\sum^m_{j=1}\big( \frac{1}{\lambda_{\varepsilon,j}^2}+|x_{\e,j}-a_j|^2\big)   \Big) \|v \|.
\end{equation}
\end{Lem}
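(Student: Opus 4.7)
The plan is to write ${\bf f}_\e$ as the sum of a \emph{bubble-interaction} piece and a \emph{$Q$-variation} piece, each of whose $L^{2N/(N+2)}(\Omega)$-norm has already been controlled by \eqref{4-24-14} and \eqref{4-24-15}, and then to pair ${\bf f}_\e$ against $v$ via H\"older's inequality and the Sobolev embedding $\|v\|_{L^{2^*}}\le C\|v\|$. Setting $c_j:=Q(a_j)^{-(N-2)/4}$, the key algebraic observation is
\[
Q(a_j)\, c_j^{2^*-1}\;=\;Q(a_j)^{1-\frac{N+2}{4}}\;=\;Q(a_j)^{-\frac{N-2}{4}}\;=\;c_j,
\]
which licenses the clean decomposition ${\bf f}_\e=I_1+I_2$, where
\[
I_1:=Q(x)\Big[\big(\textstyle\sum_{j} c_j\, PU_{x_{\e,j},\lambda_{\e,j}}\big)^{2^*-1}-\sum_{j} c_j^{2^*-1}\, PU_{x_{\e,j},\lambda_{\e,j}}^{2^*-1}\Big],\quad I_2:=\sum_{j}\big(Q(x)-Q(a_j)\big)\, c_j^{2^*-1}\, PU_{x_{\e,j},\lambda_{\e,j}}^{2^*-1}.
\]

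Next, H\"older's inequality with conjugate exponents $2N/(N+2)$ and $2^*=2N/(N-2)$, combined with the Sobolev embedding, yields
\[
\Big|\int_\Omega {\bf f}_\e\, v\,\mathrm{d}x\Big|\;\le\; C\big(\|I_1\|_{L^{2N/(N+2)}}+\|I_2\|_{L^{2N/(N+2)}}\big)\|v\|.
\]
Since $Q$ is bounded on $\ol\Omega$, \eqref{4-24-15} directly gives $\|I_1\|_{L^{2N/(N+2)}}=O\big(\sum_j \lambda_{\e,j}^{-(N+2)/2}(\log\lambda_{\e,j})^{(N+2)/N}\big)$ when $N\ge 6$ and $O\big(\sum_j \lambda_{\e,j}^{-(N-2)}\big)$ when $N<6$. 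The factors $c_j^{2^*-1}$ being uniformly bounded, one has pointwise $|I_2|\le C\sum_j |Q(x)-Q(a_j)|\, PU_{x_{\e,j},\lambda_{\e,j}}^{2^*-1}$, so \eqref{4-24-14} furnishes $\|I_2\|_{L^{2N/(N+2)}}=O\big(\sum_j(\lambda_{\e,j}^{-2}+|x_{\e,j}-a_j|^2)\big)$.

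Finally, I would check that the $I_1$-bound is always absorbed into the $I_2$-bound for $N\ge 4$: when $N\ge 6$, $(N+2)/2\ge 4>2$ forces $\lambda_{\e,j}^{-(N+2)/2}(\log\lambda_{\e,j})^{(N+2)/N}=o(\lambda_{\e,j}^{-2})$, while for $N=4,5$ we have $\lambda_{\e,j}^{-(N-2)}=O(\lambda_{\e,j}^{-2})$. Summing produces $\|{\bf f}_\e\|_{L^{2N/(N+2)}}=O\big(\sum_j(\lambda_{\e,j}^{-2}+|x_{\e,j}-a_j|^2)\big)$, which together with the H\"older step gives \eqref{3-13-02}. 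The only real subtlety is the algebraic identity $Q(a_j)c_j^{2^*-1}=c_j$: without it, a naive splitting of ${\bf f}_\e$ would leave residual terms of the form $(\text{const})\,PU_{x_{\e,j},\lambda_{\e,j}}^{2^*-1}$ that are not small in any useful norm and would spoil the bound. Once the identity is noticed, the proof reduces to a routine invocation of \eqref{4-24-14}, \eqref{4-24-15}, and the Sobolev embedding.
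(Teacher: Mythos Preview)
Your proposal is correct and follows essentially the same route as the paper: H\"older's inequality with exponents $2N/(N+2)$ and $2^*$, then invoking \eqref{4-24-14} for the $Q$-variation piece and \eqref{4-24-15} for the bubble-interaction piece. You are simply more explicit than the paper about the algebraic identity $Q(a_j)c_j^{2^*-1}=c_j$ that makes the splitting ${\bf f}_\e=I_1+I_2$ exact, and about why the \eqref{4-24-15} contribution is absorbed by the $\lambda_{\e,j}^{-2}$ term; the paper leaves both of these implicit.
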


\begin{proof}
First, by H\"older's inequality, we have
\begin{equation*}
\int_\Omega {\bf f}_\e v{\mathrm d}x =O\Big( \int_\Omega \big({\bf f}_\e\big)^{\frac{2N}{N+2}}{\mathrm d}x   \Big)^{\frac{N+2}{2N}}\|v \|.
\end{equation*}
Next, we compute
\begin{equation*}
\begin{split}
\Big(\int_\Omega \big({\bf f}_\e\big)^{\frac{2N}{N+2}}{\mathrm d}x    \Big)^{\frac{N+2}{2N}}=&
 O\left(\Big(\int_\Omega \Big(\sum^m_{j=1} \big|Q(x)-Q(a_j)\big|PU^{2^*-1}_{x_{\varepsilon,j},\lambda_{\varepsilon,j}}    \Big)^{\frac{2N}{N+2}}{\mathrm d}x\Big)^{\frac{N+2}{2N}}\right)
 \\&
+O\Bigg(\int_\Omega\Big(\big(\sum^m_{j=1} PU_{x_{\varepsilon,j},\lambda_{\varepsilon,j}}\big)^{2^*-1}
-\sum^m_{j=1}PU^{2^*-1}_{x_{\varepsilon,j},\lambda_{\varepsilon,j}}   \Big)^{\frac{2N}{N+2}}{\mathrm d}x\Big)^{\frac{N+2}{2N}} \Bigg).
\end{split}\end{equation*}
Hence using \eqref{4-24-14} and above estimates, we  get \eqref{3-13-02}.

\end{proof}

\begin{Lem}
For $v\in H^1_0(\Omega)$, it holds
\begin{equation}\label{dd3-13-02}
\int_\Omega {\bf R}_\e(w_\e) v{\mathrm d}x =o\Big( \|w_\e\|\Big) \|v \|.
\end{equation}
\end{Lem}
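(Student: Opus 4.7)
The plan is to decompose ${\bf R}_\varepsilon(w_\varepsilon) = T_1 + T_2$, where, setting $U_\varepsilon := \sum_{j=1}^m Q(a_j)^{-(N-2)/4} PU_{x_{\varepsilon,j},\lambda_{\varepsilon,j}}$,
$$
T_1 := Q(x)\bigl[(U_\varepsilon+w_\varepsilon)^{2^*-1} - U_\varepsilon^{2^*-1} - (2^*-1)\, U_\varepsilon^{2^*-2}\, w_\varepsilon\bigr], \qquad T_2 := \varepsilon(U_\varepsilon+w_\varepsilon)^s.
$$
For each piece I would estimate $\int_\Omega T_i\, v\,{\mathrm d}x$ via H\"older's inequality with conjugate exponents $\tfrac{2N}{N+2}$ and $2^*$ and then apply the Sobolev embedding $H^1_0(\Omega) \hookrightarrow L^{2^*}(\Omega)$ to replace $\|v\|_{L^{2^*}}$ by $\|v\|$. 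The target is $\|T_i\|_{L^{2N/(N+2)}} = o(\|w_\varepsilon\|)$ as $\varepsilon \to 0$.

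For $T_1$, I would use the elementary pointwise inequality
$$
\bigl|(a+b)^p - a^p - p\,a^{p-1} b\bigr| \leq C\bigl(|b|^p + a^{p-2}b^2\bigr) \quad \text{if } p>2, \qquad \leq C |b|^p \quad \text{if } 1<p\leq 2,
$$
with $p = 2^*-1$. The $|w_\varepsilon|^p$ contribution gives $\|w_\varepsilon^p\|_{L^{2N/(N+2)}} = \|w_\varepsilon\|_{L^{2^*}}^p \leq C\|w_\varepsilon\|^p$, which is $o(\|w_\varepsilon\|)$ since $p>1$ and $\|w_\varepsilon\|=o(1)$ by Theorem \ref{prop1}. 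In the low-dimensional range $N < 6$ (where $p>2$), the extra $U_\varepsilon^{p-2}w_\varepsilon^2$ term is handled by a generalized H\"older inequality with exponents tuned so that $U_\varepsilon$ is measured in $L^{2^*}$ (uniformly bounded) and $w_\varepsilon^2$ in $L^{2^*/2}$, yielding $\|U_\varepsilon\|_{L^{2^*}}^{p-2}\|w_\varepsilon\|^2 = O(\|w_\varepsilon\|^2) = o(\|w_\varepsilon\|)$.

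For $T_2$, use $(a+b)^s \leq 2^{s-1}(a^s + b^s)$ for $s \geq 1$. The $\varepsilon|w_\varepsilon|^s$ contribution gives $C\varepsilon\|w_\varepsilon\|_{L^{2Ns/(N+2)}}^s\|v\|$; since $s < 2^*-1$ implies $2Ns/(N+2) < 2^*$, Sobolev embedding yields $\leq C\varepsilon \|w_\varepsilon\|^s \|v\| = o(\|w_\varepsilon\|)\|v\|$, using $\varepsilon \to 0$ and that $\|w_\varepsilon\|^{s-1}$ remains bounded. The $w_\varepsilon$-independent residue $\varepsilon\int U_\varepsilon^s v$ is bounded by $C\varepsilon\|U_\varepsilon^s\|_{L^{2N/(N+2)}}\|v\|$; using \eqref{3-13-08} (equivalently \eqref{ll1}), this is controlled by $\varepsilon$ times a negative power of $\lambda_{\varepsilon,j}$, which is the piece that must be absorbed into $o(\|w_\varepsilon\|)\|v\|$.

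The main obstacle is precisely this last piece $\varepsilon U_\varepsilon^s$: its magnitude is governed by the external parameter $\varepsilon$ and the concentration scales $\lambda_{\varepsilon,j}$, independently of $w_\varepsilon$. To close the estimate as stated, one has to feed this term back through the coercivity inequality \eqref{3-13-01} and the equation ${\bf Q}_\varepsilon w_\varepsilon = {\bf f}_\varepsilon + {\bf R}_\varepsilon(w_\varepsilon)$, combined with Lemma \ref{lem1}, to deduce an a priori lower bound on $\|w_\varepsilon\|$ in terms of the forcing data. Equivalently, one uses that the concentrated-solution ansatz forces $\|w_\varepsilon\|$ to be at least of the order of $\varepsilon\|U_\varepsilon^s\|_{L^{2N/(N+2)}}$ plus the contribution from ${\bf f}_\varepsilon$; the bookkeeping hinges on the sharp asymptotics of the subcritical integral \eqref{3-13-08} in $\lambda_{\varepsilon,j}$, which will later also be exploited when applying the local Pohozaev identity \eqref{clp-10} to obtain the non-existence in Theorem \ref{th1-1}.
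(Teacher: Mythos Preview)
Your decomposition ${\bf R}_\varepsilon(w_\varepsilon)=T_1+T_2$ and your treatment of $T_1$ via the pointwise Taylor-remainder inequality, H\"older with exponents $\tfrac{2N}{N+2}$ and $2^*$, and Sobolev embedding are exactly what the paper does (this is the meaning of the phrase ``${\bf R}_\varepsilon(w_\varepsilon)$ is the high order of $w_\varepsilon$'' together with the use of \eqref{4-24-15} and \eqref{3-14-11}). Your handling of the $\varepsilon|w_\varepsilon|^s$ piece of $T_2$ is also correct.

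You have correctly isolated the one genuinely delicate point: the residue $\varepsilon U_\varepsilon^s$ is independent of $w_\varepsilon$ and cannot literally be $o(\|w_\varepsilon\|)$ without further input. The paper is in fact slightly imprecise here. Its proof explicitly records the estimate \eqref{3-13-03}, i.e.\ $\int_\Omega U_\varepsilon^s v = O\big(\sum_j \eta_1(\lambda_{\varepsilon,j})\big)\|v\|$, and then this contribution $O(\varepsilon\,\eta_1(\lambda_{\varepsilon,j}))\|v\|$ is carried as a separate term into the proof of the next Proposition, which is why the final bound \eqref{a4-18-12} reads $\|w_\varepsilon\| = O\big(\varepsilon\sum_j\eta_1(\lambda_{\varepsilon,j})\big) + O\big(\sum_j \lambda_{\varepsilon,j}^{-2}\big)$. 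So the honest statement of the lemma should be
\[
\int_\Omega {\bf R}_\varepsilon(w_\varepsilon)\,v\,{\mathrm d}x \;=\; o(\|w_\varepsilon\|)\|v\| \;+\; O\Big(\varepsilon\sum_{j=1}^m \eta_1(\lambda_{\varepsilon,j})\Big)\|v\|,
\]
and this is how the paper actually uses it.

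Your proposed resolution, however, goes in the wrong direction: you suggest feeding the $\varepsilon U_\varepsilon^s$ term back through the coercivity inequality \eqref{3-13-01} to extract an a priori \emph{lower} bound on $\|w_\varepsilon\|$. Coercivity gives upper bounds, not lower ones, and in any case $w_\varepsilon$ could in principle be smaller than the forcing (there could be cancellation between ${\bf f}_\varepsilon$ and the $\varepsilon U_\varepsilon^s$ part of ${\bf R}_\varepsilon$). The correct and much simpler fix is the one above: do not try to absorb $\varepsilon U_\varepsilon^s$ into $o(\|w_\varepsilon\|)$ at all, but keep it as an explicit additive term and pass it directly into the estimate \eqref{a4-18-12}.
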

\begin{proof}
First, direct calculations yield that
\begin{equation}\label{3-13-03}
\begin{split}
\int_\Omega \Big(\sum^m_{j=1}PU_{x_{\varepsilon,j},\lambda_{\varepsilon,j}}\Big)^{s}v{\mathrm d}x=& O\left(\Big(\int_\Omega \Big(\sum^m_{j=1}PU_{x_{\varepsilon,j},\lambda_{\varepsilon,j}}  \Big)^{\frac{2Ns}{N+2}}{\mathrm d}x\Big)^{\frac{N+2}{2N}}\|v \|\right)\\=&O\Big(\sum^m_{j=1}\eta_1({\lambda}_{\varepsilon,j})\Big)\|v\|,
\end{split}
\end{equation}
where
\begin{equation*}
\eta_1(\lambda)=
\begin{cases}
\frac{1}{\lambda^{(N+2)/2-(N-2)s/2}}, & \mbox{if}~s>\frac{N+2}{2(N-2)},\\[2mm]
\frac{(\log \lambda)^{(N+2)/(2N)}}{\lambda^{(N+2)/2-(N-2)s/2}}, & \mbox{if}~s=\frac{N+2}{2(N-2)},\\[2mm]
\frac{1}{\lambda^{ (N-2)s/2}}, & \mbox{if}~s<\frac{N+2}{2(N-2)}.
\end{cases}
\end{equation*}
Since ${\bf R}_\e(w_\e)$ is the high order of $w_\e$, then \eqref{dd3-13-02} can be deduced  by
H\"older's inequality, \eqref{4-24-15}, \eqref{3-13-03} and \eqref{3-14-11}.
\end{proof}

 \begin{Prop}
Let $u_{\varepsilon}(x)$ is a solution of \eqref{1.1} with \eqref{4-6-1},
 then the error term $w_{\varepsilon}$ satisfies
 \begin{equation}\label{a4-18-12}
 \|w_{\varepsilon}\|=O\Big(\varepsilon \sum^m_{j=1}\eta_1(\lambda_{\varepsilon,j})\Big)
 +O\Big(\sum^m_{j=1}\frac{1}{\lambda_{\varepsilon,j}^2}\Big).
\end{equation}
 \end{Prop}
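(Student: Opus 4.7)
The plan is to test the equation \eqref{2-16-1} against $w_\varepsilon$ itself and invert $\mathbf{Q}_\varepsilon$ via the coercivity provided by Proposition \ref{Prop-Luo2}. By the Remark following the proof of Theorem \ref{prop1}, after adjusting $x_{\varepsilon,j}$ by $o(1/\lambda_{\varepsilon,j})$, one may assume $w_\varepsilon \in \mathbb{E}_{x_\varepsilon,\lambda_\varepsilon}$. Then \eqref{3-13-01} gives
\[
\rho \|w_\varepsilon\|^{2} \le \langle \mathbf{Q}_\varepsilon w_\varepsilon, w_\varepsilon\rangle_{\varepsilon} = \int_\Omega \mathbf{f}_\varepsilon\, w_\varepsilon\,{\mathrm d}x + \int_\Omega \mathbf{R}_\varepsilon(w_\varepsilon)\, w_\varepsilon\,{\mathrm d}x.
\]
So everything reduces to two dual-pairing estimates.

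For the first integral, I would simply invoke Lemma \ref{lem1} with $v=w_\varepsilon$, getting
\[
\int_\Omega \mathbf{f}_\varepsilon\, w_\varepsilon\,{\mathrm d}x = O\Big(\sum_{j=1}^{m}\big(\tfrac{1}{\lambda_{\varepsilon,j}^{2}} + |x_{\varepsilon,j}-a_j|^{2}\big)\Big)\|w_\varepsilon\|.
\]
Then \eqref{luo1} yields $|x_{\varepsilon,j}-a_j|^{2} = o\big(\sum_l 1/\lambda_{\varepsilon,l}^{2}\big)$, so this term is controlled by $O\big(\sum_j \lambda_{\varepsilon,j}^{-2}\big)\|w_\varepsilon\|$.

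For the second integral, the expression of $\mathbf{R}_\varepsilon$ in \eqref{Re} splits into a purely nonlinear piece (the difference between the $(2^*-1)$-power of $\sum PU + w_\varepsilon$ and its linearization) and a subcritical piece $\varepsilon(\sum Q(a_j)^{-(N-2)/4} PU_{x_{\varepsilon,j},\lambda_{\varepsilon,j}} + w_\varepsilon)^{s}$. The first piece is a Taylor remainder which is pointwise bounded by $C(|w_\varepsilon|^{2} + |w_\varepsilon|^{2^*-1})$ when $N\le 6$ and by $C|w_\varepsilon|^{2^*-1}$ when $N\ge 7$; tested against $w_\varepsilon$ via H\"older it produces $o(\|w_\varepsilon\|^{2})$ exactly as in the bound \eqref{dd3-13-02}. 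For the subcritical piece, I would further split it into $\varepsilon(\sum PU)^{s}$ plus a remainder of order $\varepsilon |w_\varepsilon|^{s} + \varepsilon |w_\varepsilon|(\sum PU)^{s-1}$; the main part is handled by estimate \eqref{3-13-03}, giving $O\big(\varepsilon \sum_j \eta_1(\lambda_{\varepsilon,j})\big)\|w_\varepsilon\|$, while the remainder is subordinate (either $o(\|w_\varepsilon\|^{2})$ or absorbable into the main contribution for small $\varepsilon$).

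Combining the two estimates yields
\[
\rho\|w_\varepsilon\|^{2} \le C\Big(\sum_{j=1}^{m}\tfrac{1}{\lambda_{\varepsilon,j}^{2}} + \varepsilon\sum_{j=1}^{m}\eta_{1}(\lambda_{\varepsilon,j})\Big)\|w_\varepsilon\| + o(1)\|w_\varepsilon\|^{2},
\]
and absorbing the $o(1)\|w_\varepsilon\|^{2}$ term into the left-hand side gives \eqref{a4-18-12}. The one subtlety I expect is the bookkeeping in the subcritical term: one must verify that the cross term $\varepsilon\int(\sum PU)^{s-1} w_\varepsilon^{2}$ is genuinely of higher order relative to $\varepsilon \sum \eta_1(\lambda_{\varepsilon,j})\|w_\varepsilon\|$ (it is, since $\|w_\varepsilon\|=o(1)$ and the mass of $(\sum PU)^{s-1}$ is comparable to a power of $\lambda_{\varepsilon,j}^{-1}$), so it does not pollute the leading-order bound.
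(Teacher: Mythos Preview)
Your proof is correct and follows essentially the same route as the paper: test \eqref{2-16-1} against $w_\varepsilon$, use the coercivity \eqref{3-13-01}, bound $\int\mathbf{f}_\varepsilon w_\varepsilon$ via \eqref{3-13-02} together with \eqref{luo1}, and control $\int\mathbf{R}_\varepsilon(w_\varepsilon)w_\varepsilon$ by the Taylor remainder plus the subcritical contribution from \eqref{3-13-03}. In fact your splitting of the subcritical piece is more explicit than the paper's rather compressed statement \eqref{dd3-13-02}, which bundles the $\varepsilon\,\eta_1(\lambda_{\varepsilon,j})$ contribution into a single $o(\|w_\varepsilon\|)$ symbol; your version makes clear where the $\varepsilon\sum_j\eta_1(\lambda_{\varepsilon,j})$ term in \eqref{a4-18-12} actually comes from.
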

 \begin{proof}
 The estimate \eqref{a4-18-12} can be deduced  by \eqref{luo1}, \eqref{3-13-01}, \eqref{2-16-1}, \eqref{3-13-02} and \eqref{dd3-13-02}.
  \end{proof}

  \begin{Prop}
For any fixed small $d>0$, it holds
\begin{equation}\label{2020-01-02-1}
 \begin{split}                                                                                           \int_{B_d(x_{\varepsilon,j})}& \Big(( x-x_{\varepsilon,j}) \cdot \nabla Q(x)\Big)u_\varepsilon^{2^*}
 {\mathrm d}x   \\
=&A \big(Q(a_j)\big)^{-\frac{N}{2}} \frac{\Delta Q(a_j) }{\lambda_{\varepsilon,j}^2}
+o\Big(\sum^m_{j=1}\frac{1}{\lambda_{\varepsilon,j}^2}   \Big)+
O\Big(\sum^m_{j=1}\frac{1}{\lambda_{\varepsilon,j}^{N-2}}+\varepsilon \sum^m_{j=1}
\frac{\eta_1(\lambda_{\varepsilon,j})}{\lambda_{\varepsilon,j}} \Big),
\end{split}
\end{equation}
   where $A$ is the constant in Lemma \ref{Lemma3.1}.
\end{Prop}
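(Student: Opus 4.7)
The plan is to substitute the decomposition from Theorem~\ref{prop1}, namely $u_\varepsilon = V_\varepsilon + w_\varepsilon$ with $V_\varepsilon := \sum_{l=1}^m Q(a_l)^{-(N-2)/4} PU_{x_{\varepsilon,l},\lambda_{\varepsilon,l}}$, into $u_\varepsilon^{2^*}$. The pointwise inequality
\[
\bigl| (V_\varepsilon + w_\varepsilon)^{2^*} - V_\varepsilon^{2^*} - 2^* V_\varepsilon^{2^*-1} w_\varepsilon\bigr| \le C\bigl(V_\varepsilon^{2^*-2} w_\varepsilon^2 + |w_\varepsilon|^{2^*}\bigr)
\]
splits the integral into a main piece and errors. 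First I would isolate the $j$-th bubble by writing $V_\varepsilon^{2^*} = Q(a_j)^{-N/2}\,PU_{x_{\varepsilon,j},\lambda_{\varepsilon,j}}^{2^*}$ plus cross terms. On $B_d(x_{\varepsilon,j})$, for $i\neq j$ one has $PU_{x_{\varepsilon,i},\lambda_{\varepsilon,i}}=O(\lambda_{\varepsilon,i}^{-(N-2)/2})$ uniformly, so the cross contributions integrate to $O(\sum_l\lambda_{\varepsilon,l}^{-(N-2)})$. Applying \eqref{3-13-07} from Lemma~\ref{Lemma3.1} to the single-bubble piece produces the announced main asymptotic $A\,Q(a_j)^{-N/2}\,\Delta Q(a_j)\,\lambda_{\varepsilon,j}^{-2} + o(\lambda_{\varepsilon,j}^{-2}) + O(\lambda_{\varepsilon,j}^{-(N-2)})$.

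For the linear error $2^*\int_{B_d(x_{\varepsilon,j})}((x-x_{\varepsilon,j})\cdot \nabla Q(x))\,V_\varepsilon^{2^*-1}\,w_\varepsilon\,dx$, the crucial observation is that $\nabla Q(a_j)=0$ together with $|x_{\varepsilon,j}-a_j|=o(\lambda_{\varepsilon,j}^{-1})$ from \eqref{luo1} gives $\nabla Q(x_{\varepsilon,j}) = o(\lambda_{\varepsilon,j}^{-1})$, so by Taylor expansion $|(x-x_{\varepsilon,j})\cdot \nabla Q(x)| = O(|x-x_{\varepsilon,j}|^2 + |x-x_{\varepsilon,j}|/\lambda_{\varepsilon,j}) = O(|x-x_{\varepsilon,j}|)$ on $B_d(x_{\varepsilon,j})$. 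By H\"older's inequality with exponents $2N/(N+2)$ and $2^*$, and the Sobolev embedding,
\[
\Bigl|\int_{B_d(x_{\varepsilon,j})}(x-x_{\varepsilon,j})\cdot \nabla Q(x)\, V_\varepsilon^{2^*-1} w_\varepsilon \, dx\Bigr| \le C\bigl\||x-x_{\varepsilon,j}|\,V_\varepsilon^{2^*-1}\bigr\|_{L^{\frac{2N}{N+2}}}\|w_\varepsilon\|.
\]
A scaling computation around the $j$-th bubble shows that $\bigl\||x-x_{\varepsilon,j}|\,V_\varepsilon^{2^*-1}\bigr\|_{L^{2N/(N+2)}}=O(\lambda_{\varepsilon,j}^{-1})$. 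Substituting the bound $\|w_\varepsilon\|=O\bigl(\varepsilon\sum_l\eta_1(\lambda_{\varepsilon,l})+\sum_l\lambda_{\varepsilon,l}^{-2}\bigr)$ from \eqref{a4-18-12} then yields a contribution of order $O\bigl(\varepsilon\sum_l\eta_1(\lambda_{\varepsilon,l})/\lambda_{\varepsilon,l}\bigr) + O\bigl(\sum_l\lambda_{\varepsilon,l}^{-3}\bigr)$, precisely matching the stated errors (the cubic term being absorbed into $o(\sum_l\lambda_{\varepsilon,l}^{-2})$).

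For the quadratic and higher terms, one applies H\"older together with Sobolev: both $\int V_\varepsilon^{2^*-2} w_\varepsilon^2$ and $\int |w_\varepsilon|^{2^*}$ are controlled by powers of $\|w_\varepsilon\|$ and are easily absorbed into $o(\lambda_{\varepsilon,j}^{-2})$ via \eqref{a4-18-12}. The main obstacle I foresee is the careful exploitation of $\nabla Q(a_j)=0$: without the extra $|x-x_{\varepsilon,j}|$ factor it produces in the weight, the H\"older bound on the linear error would carry only a factor $\|w_\varepsilon\|$, which is too large to fit inside the claimed error. Extracting the additional $\lambda_{\varepsilon,j}^{-1}$ improvement, and propagating it all the way through the bounds on $w_\varepsilon$ from \eqref{a4-18-12}, is what makes the estimate sharp and matches the right-hand side exactly.
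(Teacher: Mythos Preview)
Your approach is correct and essentially the same as the paper's. The paper uses the simpler first-order expansion from \eqref{4-24-12}, namely $u_\varepsilon^{2^*} = \bigl(Q(a_j)^{-(N-2)/4}PU_{x_{\varepsilon,j},\lambda_{\varepsilon,j}}\bigr)^{2^*} + O\bigl(PU_{x_{\varepsilon,j},\lambda_{\varepsilon,j}}^{2^*-1}|w_\varepsilon| + |w_\varepsilon|^{2^*}\bigr)$ on $B_d(x_{\varepsilon,j})$ (absorbing the other bubbles, which are $O(\lambda^{-(N-2)/2})$ there), rather than your second-order expansion; the H\"older step with exponents $2N/(N+2)$ and $2^*$ and the subsequent use of \eqref{3-13-07} and \eqref{a4-18-12} are identical.

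One correction to your commentary: the factor $|x-x_{\varepsilon,j}|$ in the weight requires no ``careful exploitation of $\nabla Q(a_j)=0$''. It is there trivially, since $|(x-x_{\varepsilon,j})\cdot\nabla Q(x)|\le \|\nabla Q\|_{L^\infty}\,|x-x_{\varepsilon,j}|$; this is exactly what the paper uses. Neither the hypothesis $\nabla Q(a_j)=0$ nor the estimate \eqref{luo1} enters the control of the error terms in this proposition. (The vanishing of $\nabla Q$ at $a_j$ is relevant only for identifying the main term, and even in the proof of \eqref{3-13-07} it is the radial symmetry of $U_{x_{\varepsilon,j},\lambda_{\varepsilon,j}}$, not $\nabla Q(a_j)=0$, that kills the linear piece.) So your ``main obstacle'' paragraph misidentifies where the gain comes from, though the estimate itself is fine.
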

\begin{proof}
  First, using \eqref{4-24-12}, we compute
  \begin{equation*}
  \begin{split}
   \int_{B_d(x_{\varepsilon,j})}&\Big(( x-x_{\varepsilon,j}) \cdot \nabla Q(x)\Big)u_\varepsilon^{2^*}
   {\mathrm d}x
  \\ =& \int_{B_d(x_{\varepsilon,j})}
  \Big(( x-x_{\varepsilon,j}) \cdot \nabla Q(x)\Big)
  \Big(\big(Q(a_j)\big)^{-\frac{N-2}{4}}PU_{x_{\varepsilon,j},
\lambda_{\varepsilon,j}}\Big)^{2^*}{\mathrm d}x
  \\ &+O\left(\int_{B_d(x_{\varepsilon,j})} \Big(   \big(Q(a_j)\big)^{-\frac{N-2}{4}}PU_{x_{\varepsilon,j},
\lambda_{\varepsilon,j}}\Big)^{2^*-1}|w_{\varepsilon}|\cdot |x-x_{\varepsilon,j}|{\mathrm d}x +
  \int_{B_d(x_{\varepsilon,j})}|w_{\varepsilon}|^{2^*}{\mathrm d}x\right).
  \end{split}
  \end{equation*}
  Also, by H\"older's inequality, we find
  \begin{equation*}\begin{split}
\int_{B_d(x_{\varepsilon,j})} & \Big(   \big(Q(a_j)\big)^{-\frac{N-2}{4}}PU_{x_{\varepsilon,j},
\lambda_{\varepsilon,j}}       \Big)^{2^*-1}
  |w_{\varepsilon}|\cdot |x-x_{\varepsilon,j}|    {\mathrm d}x
  \\=&
  O\Big(\big(\int_{B_d(x_{\varepsilon,j})}PU^{2^*}_{x_{\varepsilon,j},{\lambda_{\varepsilon,j}}}|x-x_{\varepsilon,j}|
  ^{\frac{2^*}{2^*-1}}{\mathrm d}x\big)^{\frac{2^*-1}{2^*}}\|w_{\varepsilon}\|_{L^{2^*}}   \Big)
  =
  O\Big(\frac{1}{\lambda_{\varepsilon,j}}\|w_{\varepsilon}\|   \Big).
  \end{split}
  \end{equation*}
  Then from above estimates, it holds
  \begin{equation*}
  \begin{split}
   &\int_{B_d(x_{\varepsilon,j})} \Big(( x-x_{\varepsilon,j}) \cdot \nabla Q(x)\Big)u_\varepsilon^{2^*}
   {\mathrm d}x \\
  =& \int_{B_d(x_{\varepsilon,j})}
  \Big(( x-x_{\varepsilon,j}) \cdot \nabla Q(x)\Big)
   \Big(\big(Q(a_j)\big)^{-\frac{N-2}{4}} PU_{x_{\varepsilon,j},
\lambda_{\varepsilon,j}}\Big)^{ 2^*}{\mathrm d}x
  + O\Big(\frac{\|w_{\varepsilon}\|}{\lambda_{\varepsilon,j}}+\|w_{\varepsilon}\|^{2^*}\Big),
  \end{split}
  \end{equation*}
 which, together with \eqref{3-13-07} and  \eqref{a4-18-12}, gives \eqref{2020-01-02-1}.
  \end{proof}

\begin{Lem}
It holds
\begin{equation}\label{2020-01-02-2}
   \begin{split}
   \displaystyle\int_{B_d(x_{\varepsilon,j})}u_\varepsilon^{s+1}{\mathrm d}x
              =&  \begin{cases}
    \frac{1}{\lambda_{\varepsilon,j}^{2-\frac{N-2}{2}(s-1)}}
                  \Big( \big(Q(a_j)\big)^{-\frac{(N-2)(s+1)}{4}} B
                 +o(1)   \Big),&~\mbox{for}~N+s\neq 5,\\[8mm]
                \frac{\omega_4}{Q(a_j)} \frac{\log \lambda_{\varepsilon,j}}{\lambda_{\varepsilon,j}^{2}}
                 +O(\frac{1}{\lambda_{\varepsilon,j}^{2}}),&~\mbox{for}~N+s=5,
                 \end{cases}
   \end{split}     \end{equation}
with $B$ and $\omega_4$ are the constants in Lemma \ref{Lemma3.1}.
\end{Lem}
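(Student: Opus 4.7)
The plan is to localize the integral to $B_d(x_{\varepsilon,j})$, exploit the fact that only the $j$-th bubble is genuinely concentrated on this ball, and reduce the calculation to the bubble integral already carried out in Lemma \ref{Lemma3.1}.

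First I would decompose $u_\varepsilon = v_j + \phi_j$, where $v_j := \bigl(Q(a_j)\bigr)^{-(N-2)/4} PU_{x_{\varepsilon,j},\lambda_{\varepsilon,j}}$ is the $j$-th profile and $\phi_j := \sum_{i\neq j}\bigl(Q(a_i)\bigr)^{-(N-2)/4} PU_{x_{\varepsilon,i},\lambda_{\varepsilon,i}} + w_\varepsilon$ collects the off-centre bubbles and the error. Since the $a_i$ are distinct and $x_{\varepsilon,i}\to a_i$, one has $|x-x_{\varepsilon,i}|\geq c>0$ uniformly on $B_d(x_{\varepsilon,j})$ for $i\neq j$ once $d$ and $\varepsilon$ are small; consequently each off-centre bubble is bounded by $C\lambda_{\varepsilon,i}^{-(N-2)/2}$ there, so that $\phi_j$ is a genuinely small perturbation on the ball. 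Applying the elementary inequality $|(a+b)^{s+1}-a^{s+1}|\leq C_s\bigl(a^s|b|+|b|^{s+1}\bigr)$ for $a\geq 0$ and $s\geq 0$, the integral splits as
\begin{equation*}
\int_{B_d(x_{\varepsilon,j})} u_\varepsilon^{s+1}\,{\mathrm d}x = \int_{B_d(x_{\varepsilon,j})} v_j^{s+1}\,{\mathrm d}x + O\!\left(\int_{B_d(x_{\varepsilon,j})} v_j^s|\phi_j|\,{\mathrm d}x\right) + O\!\left(\int_{B_d(x_{\varepsilon,j})} |\phi_j|^{s+1}\,{\mathrm d}x\right).
\end{equation*}
The main term factors as $\bigl(Q(a_j)\bigr)^{-(N-2)(s+1)/4}\int_{B_d(x_{\varepsilon,j})} PU_{x_{\varepsilon,j},\lambda_{\varepsilon,j}}^{s+1}\,{\mathrm d}x$, and invoking \eqref{3-13-08} produces precisely the stated leading expressions in the two cases $N+s\neq 5$ and $N+s=5$ (the $\omega_4$-coefficient in the borderline case arising from the logarithmic divergence of $\int(1+|z|^2)^{-N/2}\,{\mathrm d}z$ when $(N-2)(s+1)=N$).

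It then remains to show that the two error integrals are of strictly lower order. The off-centre bubble part of $\phi_j$ contributes $\lambda_{\varepsilon,i}^{-(N-2)/2}\int_{B_d(x_{\varepsilon,j})} v_j^s\,{\mathrm d}x$, which scales as an extra negative power of $\lambda$ compared with the principal rate. For the $w_\varepsilon$ part I would use Hölder's inequality with a carefully chosen conjugate pair, combining the Sobolev embedding $H_0^1\hookrightarrow L^{2^*}$ with the bound $\|w_\varepsilon\|=O\bigl(\varepsilon\sum_i\eta_1(\lambda_{\varepsilon,i})+\sum_i\lambda_{\varepsilon,i}^{-2}\bigr)$ from \eqref{a4-18-12}. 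The integrability estimate for $\int PU^{sp}$ in the range corresponding to the conjugate exponent feeds back in the style of \eqref{3-13-08} and yields a bound that is $o$ of the main term in every sub-case.

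The main obstacle is the borderline regime $N+s=5$ (i.e.\ $N=4,\,s=1$), where the principal term decays only like $\log\lambda_{\varepsilon,j}/\lambda_{\varepsilon,j}^2$ while \eqref{a4-18-12} already contributes a $\lambda^{-2}$ term to $\|w_\varepsilon\|$. Here the Hölder exponents must be chosen so that $\int v_j^s|w_\varepsilon|$ picks up a factor $(\log\lambda_{\varepsilon,j})^{1/2}$ together with $\|w_\varepsilon\|$, and the remaining smallness relative to $\log\lambda_{\varepsilon,j}/\lambda_{\varepsilon,j}^2$ is supplied by $\varepsilon\to 0$ in the $\varepsilon\eta_1(\lambda)$ contribution. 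Once this delicate case is handled, the estimate \eqref{2020-01-02-2} follows directly by combining the main term with the verified $o(\cdot)$ error bounds.
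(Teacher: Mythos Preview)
Your proposal is correct and follows essentially the same route as the paper: split off the $j$-th bubble, apply the elementary inequality \eqref{4-24-12}, read off the main term from \eqref{3-13-08}, and control the $w_\varepsilon$-contribution via H\"older with the $2^*$-conjugate exponent together with \eqref{a4-18-12}, obtaining $O\bigl(\eta_1(\lambda_{\varepsilon,j})\|w_\varepsilon\|\bigr)$. One remark: the borderline case $N+s=5$ is less delicate than you suggest, since the claimed remainder there is only $O(\lambda_{\varepsilon,j}^{-2})$ rather than $o(\log\lambda_{\varepsilon,j}/\lambda_{\varepsilon,j}^{2})$, and the standard H\"older bound already delivers $\eta_1(\lambda_{\varepsilon,j})\|w_\varepsilon\|=O(\lambda_{\varepsilon,j}^{-1}\cdot\lambda_{\varepsilon,j}^{-1})=O(\lambda_{\varepsilon,j}^{-2})$ without any extra logarithmic factor.
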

\begin{proof}
First, using \eqref{4-24-12}, we have
\begin{equation}\label{Luo3.34}
\begin{split}
 \int_{B_d(x_{\varepsilon,j})}u_\varepsilon^{s+1}{\mathrm d}x
 =&
\int_{B_d(x_{\varepsilon,j})}  \Big(  \big(Q(a_j)\big)^{-\frac{N-2}{4}} PU_{x_{\varepsilon,j},\lambda_{\varepsilon,j}}        \Big)^{s+1}  {\mathrm d}x
\\   &~~ +O\left(
\int_{B_d(x_{\varepsilon,j})}\Big(\big(Q(a_j)\big)^{-\frac{(N-2)s}{4}} PU^{s}_{x_{\varepsilon,j},\lambda_{\varepsilon,j}} |w_{\varepsilon}|+
|w_{\varepsilon}|^{s+1}
\Big){\mathrm d}x   \right).
\end{split}
\end{equation}
 Furthermore, by H\"older's inequality and \eqref{a4-18-12}, we find
\begin{equation}\label{Luo3.35}
\begin{split}
\int_{B_d(x_{\varepsilon,j})}PU_{x_{\varepsilon,j},\lambda_{\varepsilon,j}}^{s}|w_{\varepsilon}|
 {\mathrm d}x
=&
O\Big(\big(\int_{B_d(x_{\varepsilon,j})}PU_{x_{\varepsilon,j},\lambda_{\varepsilon,j}}^{\frac{2Ns}{N+2}}
{\mathrm d}x\big)^{\frac{N+2}{2N}}\|w_{\varepsilon}\|   \Big)
=O\Big(\eta_1  \big(\lambda_{\varepsilon,j}\big)\|w_{\varepsilon}\|   \Big).
\end{split}
\end{equation}
Similarly, we know
\begin{equation}\label{Luo3.36}
\int_{B_d(x_{\varepsilon,j})}|w_{\varepsilon}|^{s+1}{\mathrm d}x
=O\Big(\|w_{\varepsilon}\|^{s+1}   \Big).
\end{equation}
Combining  Lemma \ref{Lemma3.1}, Lemma \ref{lem1}, \eqref{3-13-08}, \eqref{Luo3.34}, \eqref{Luo3.35} and \eqref{Luo3.36},  we get
  \eqref{2020-01-02-2}.

\end{proof}
\begin{proof}[\textbf{Proof of Theorem \ref{th1-1}:}]
If $u_{\varepsilon}(x)$ is a solution of \eqref{1.1}  with \eqref{4-6-1}, then we have
that $u_{\varepsilon}(x)$ has the structure  \eqref{luo--2}.
Also, Pohzaev identity  \eqref{clp-10} in domain $\Omega'=B_d(x_{\varepsilon,j})$  yields
  \begin{equation}\label{clp-11}
\begin{split}
\frac{1}{2^*}&\int_{ B_d(x_{\varepsilon,j})   }\Big( (  x-x_{\varepsilon,j}
) \cdot  \nabla Q(x) \Big)u^{2^*}_\varepsilon   {\mathrm d}x +(1-\frac{N}{2}+\frac{N}{1+s})\varepsilon\int_{ B_d(x_{\varepsilon,j})   } u^{s+1}_\varepsilon {\mathrm d}x
\\  =&
 \int_{\partial B_d(x_{\varepsilon,j})  } \Big[\Big(\frac{Q(x)}{2^*}u^{2^*}_\varepsilon+\frac{\varepsilon}{{s+1}
  } u_{\varepsilon}^{s+1}-\frac{1}{2}
|\nabla  u_{\varepsilon}|^2   \Big)   \Big( (  x-x_{\varepsilon,j}  ) \cdot \nu\Big)
  +
\Big(  (  x-x_{\varepsilon,j})\cdot  \nabla u_{\varepsilon}
+\frac{N-2}{2}
  u_{\varepsilon}   \Big)\frac{\partial u_{\varepsilon}}{\partial\nu} \Big]{\mathrm d}\sigma.
\end{split}
\end{equation}
  Then    from  \eqref{ab4-18-12},  we can find
   \begin{equation} \label{2020-01-02-3}
 \mbox{RHS of}~\eqref{clp-11}= O\Big(\sum^m_{j=1}\frac{1}{\lambda_{\varepsilon,j}^{{N-2}}}   \Big).
 \end{equation}
Taking $j_0\in \{1,\cdots,m\}$ such that $\frac{\lambda_{\varepsilon,j_0}}{\lambda_{\varepsilon,j}}\leq C$ for $j\neq j_0$ as $\varepsilon\to 0$. Here $C$ is independent with $\varepsilon$.
 If $N\geq 5$ and $s=1$, then \eqref{2020-01-02-1} gives
 \begin{equation}\label{2020-01-02-4}
  \begin{split}
  \int_{B_d(x_{\varepsilon,j_0})}&\Big(( x-x_{\varepsilon,j_0 }) \cdot \nabla Q(x) \Big)u_\varepsilon^{2^*}
  {\mathrm d}x
 = \big(Q(a_{j_0})\big)^{-\frac{N}{2}} \frac{\Delta Q(a_{j_0}) }{\lambda_{\varepsilon,j_0}^2}
\Big( A+o(1)\Big),
  \end{split}                                                                                             \end{equation}                                                                                           and from \eqref{2020-01-02-2}, we have
  \begin{equation}\label{2020-01-02-5}
   \displaystyle\int_{B_d(x_{\varepsilon,j_0})}u_\varepsilon^{s+1}{\mathrm d}x
=\big(Q(a_ {j_0} )\big)^{-\frac{(N-2)(s+1)}{4}} \frac{B+o(1)}{\lambda_{\varepsilon,j_0}^{2}}.
\end{equation}                                                                                                             Then   combining \eqref{clp-11}, \eqref{2020-01-02-3},  \eqref{2020-01-02-4}  and  \eqref{2020-01-02-5}, we can obtain
    \begin{equation*}
     \begin{split}
     \Delta Q(a_{j_0})
    = -\frac{B}{A} \big(Q(a_ {j_0} )\big)^{\frac{N}{2}-\frac{(N-2)(s+1)}{4}} \varepsilon
    +o\big(1\big)=o\big(1\big),
     \end{split}
     \end{equation*}
which is a  contradiction with \eqref{4-24-1}. Hence if $N\geq 5$, $s=1$ and $Q(x)$ satisfies Condition (Q), problem \eqref{1.1} has  no solutions $u_\varepsilon$ with \eqref{4-6-1}.
\end{proof}
\section{Existence (Proof of Theorem \ref{th1.2})}
\setcounter{equation}{0}
\subsection{Computations concerning $PU_{x_{\varepsilon,j},\lambda_{\varepsilon,j}}$}
\begin{Lem}
If $ N\geq 4 $,  we have
\begin{equation}\label{3-13-21}
\begin{split}
 \sum^m_{l=1} & \int_{\Omega} \big(Q(x)-Q(a_l) \big) PU_{x_{\varepsilon,l},\lambda_{\varepsilon,l}}^{\frac{N+2}{N-2}} \frac{\partial PU_{x_{\varepsilon,j},\lambda_{\varepsilon,j}}}{\partial\lambda }
= - \frac{\Delta Q(a_j)(N-2)A}{N\lambda^3_{\varepsilon,j}}  \big( 1+ o(1) \big)
  +o\Big(\sum^m_{l=1}\frac{1}{\lambda_{\varepsilon,l}^{N-1}} \Big).
  \end{split}
\end{equation}
\end{Lem}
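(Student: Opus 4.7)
The plan is to split the sum $\sum_{l=1}^m$ into the diagonal term $l=j$ (which produces the main contribution) and the cross terms $l\ne j$ (which fall into the error $o(\sum_l\lambda_{\varepsilon,l}^{1-N})$).

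For the diagonal term $l=j$, I would first replace $PU_{x_{\varepsilon,j},\lambda_{\varepsilon,j}}$ by $U_{x_{\varepsilon,j},\lambda_{\varepsilon,j}}$ via $PU=U-\varphi$, handling the regular part $\varphi$ with the estimates already used in the proof of Lemma~\ref{Lemma3.1} (e.g.\ \eqref{4-24-11}), which produces only remainders of size $O(\lambda_{\varepsilon,j}^{-(N-1)})$ or better. Next, by Condition~(Q), Taylor expand
\[
Q(x)-Q(a_j)=\tfrac{1}{2}\sum_{i,k}\frac{\partial^2 Q(a_j)}{\partial x^i\partial x^k}(x-a_j)^i(x-a_j)^k+O(|x-a_j|^3),
\]
and use \eqref{luo1} (namely $|x_{\varepsilon,j}-a_j|=o(\lambda_{\varepsilon,j}^{-1})$) to replace $x-a_j$ by $x-x_{\varepsilon,j}$ up to negligible remainders. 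After the change of variable $y=\lambda_{\varepsilon,j}(x-x_{\varepsilon,j})$, the quadratic factor contributes $\lambda_{\varepsilon,j}^{-3}$, and by radial symmetry only the trace $\Delta Q(a_j)/N$ survives, reducing the computation to the scalar integral
\[
I_0:=\int_{\R^N}|y|^2\,U^{2^*-1}_{0,1}(y)\,\partial_\lambda U_{0,\lambda}(y)\bigm|_{\lambda=1}\,dy.
\]

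To evaluate $I_0$ with the claimed constant, I would use $\partial_\lambda U_{0,\lambda}|_{\lambda=1}=\tfrac{N-2}{2}U_{0,1}+y\cdot\nabla U_{0,1}$ together with $U^{2^*-1}\nabla U=(2^*)^{-1}\nabla U^{2^*}$ to recast
\[
U^{2^*-1}_{0,1}\,\partial_\lambda U_{0,\lambda}\bigm|_{\lambda=1}=\tfrac{N-2}{2}U^{2^*}_{0,1}+\tfrac{1}{2^*}\,y\cdot\nabla U^{2^*}_{0,1}.
\]
Integrating the second piece against $|y|^2$ by parts, using $\nabla\cdot(|y|^2y)=(N+2)|y|^2$, collapses this to $I_0=-\tfrac{N-2}{N}\int_{\R^N}|y|^2 U^{2^*}_{0,1}\,dy$, which with the normalization defining $A$ produces the main term $-\Delta Q(a_j)(N-2)A/(N\lambda_{\varepsilon,j}^3)$ after incorporating the $\tfrac{1}{2}\Delta Q(a_j)$ prefactor.

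For the cross terms $l\ne j$, set $\delta_0=\min_{l\ne j}|a_l-a_j|>0$ and split $\Omega=B_\rho(x_{\varepsilon,l})\cup(\Omega\setminus B_\rho(x_{\varepsilon,l}))$ with $\rho<\delta_0/3$. On $\Omega\setminus B_\rho(x_{\varepsilon,l})$ the factor $PU^{2^*-1}_{x_{\varepsilon,l},\lambda_{\varepsilon,l}}$ is uniformly $O(\lambda_{\varepsilon,l}^{-(N+2)/2})$, which combined with $\int|\partial_\lambda PU_{x_{\varepsilon,j},\lambda_{\varepsilon,j}}|\,dx=O(\lambda_{\varepsilon,j}^{-N/2})$ yields a contribution of order $\lambda_{\varepsilon,l}^{-(N+2)/2}\lambda_{\varepsilon,j}^{-N/2}$. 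On $B_\rho(x_{\varepsilon,l})$ the factor $|\partial_\lambda PU_{x_{\varepsilon,j},\lambda_{\varepsilon,j}}|$ is uniformly $O(\lambda_{\varepsilon,j}^{-N/2})$, and using $\nabla Q(a_l)=0$ one gets $\int(Q(x)-Q(a_l))PU^{2^*-1}_{x_{\varepsilon,l},\lambda_{\varepsilon,l}}\,dx=O(\lambda_{\varepsilon,l}^{-(N+2)/2})$, giving the same bound. Since $\lambda_{\varepsilon,l}/\lambda_{\varepsilon,j}\le C$, each cross term is $O(\lambda^{-(N+1)})=o(\sum_l\lambda_{\varepsilon,l}^{1-N})$, as required.

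The main technical obstacle is the Pohozaev-style evaluation of $I_0$ producing exactly the constant $-(N-2)A/N$; once the algebraic identity and integration by parts are in place, all remaining steps are routine Taylor expansion, scaling, and decay estimates away from the concentration points.
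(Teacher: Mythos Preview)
Your overall strategy matches the paper's exactly: split into the diagonal term $l=j$ and the cross terms $l\neq j$, Taylor expand $Q$ around $a_j$, exploit radial symmetry to isolate $\Delta Q(a_j)$, and reduce to a scaling computation. The paper's only computational shortcut is to write $PU^{2^*-1}\partial_\lambda PU=\tfrac{1}{2^*}\partial_\lambda PU^{2^*}$ and then differentiate $\lambda\mapsto\int|x-x_{\varepsilon,j}|^2U^{2^*}_{x_{\varepsilon,j},\lambda}=C\lambda^{-2}$ directly, rather than your integration-by-parts evaluation of $I_0$; the two are equivalent.

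There is however one gap: you invoke \eqref{luo1}, i.e.\ $|x_{\varepsilon,j}-a_j|=o(\lambda_{\varepsilon,j}^{-1})$, both to shift $x-a_j$ to $x-x_{\varepsilon,j}$ in the diagonal term and to bound $\int_{B_\rho}(Q-Q(a_l))PU^{2^*-1}_{x_{\varepsilon,l}}$ in the cross terms. But \eqref{luo1} is established in Section~3 for \emph{solutions} of \eqref{1.1}; the present lemma lives in Section~4, where one works with arbitrary $(x_\varepsilon,\lambda_\varepsilon)\in\mathbb{D}_\varepsilon$ and only $|x_{\varepsilon,j}-a_j|=o(1)$ is available. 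The fix is easy and is what the paper does implicitly: for the diagonal term, the linear-in-$(x-x_{\varepsilon,j})$ pieces of the Taylor expansion integrate to zero by odd symmetry against the radial function $\partial_\lambda U^{2^*}$, while the constant piece $|x_{\varepsilon,j}-a_j|^2$ multiplies $\int_\Omega\partial_\lambda U^{2^*}=\partial_\lambda\int_\Omega U^{2^*}=O(\lambda^{-(N+1)})$, since $\int_{\R^N}U^{2^*}_{0,\lambda}$ is $\lambda$-independent. For the cross terms, with only $|x_{\varepsilon,l}-a_l|=o(1)$ one gets $\int_{B_\rho}(Q-Q(a_l))PU^{2^*-1}_{x_{\varepsilon,l}}=o(\lambda_{\varepsilon,l}^{-(N-2)/2})$ rather than $O(\lambda_{\varepsilon,l}^{-(N+2)/2})$, but combined with $|\partial_\lambda PU_{x_{\varepsilon,j}}|=O(\lambda_{\varepsilon,j}^{-N/2})$ on $B_\rho(x_{\varepsilon,l})$ this still yields $o(\lambda^{-(N-1)})$, which is all that is claimed.
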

\begin{proof}
First,  by scaling transform and \eqref{4-24-11}, we have, for $l=j$,
\begin{equation*}
\begin{split}
 \int_{\Omega}&\Big(Q(x)-Q(a_j) \Big)    PU_{x_{\varepsilon,j},\lambda_{\varepsilon,j }}^{\frac{N+2}{N-2}} \frac{\partial PU_{x_{\varepsilon,j},\lambda_{\varepsilon,j}}}{\partial
  \lambda }   \\=&
  \frac 1{2^*}  \int_{\Omega} \Big(Q(x)-Q(a_j) \Big)  \frac{\partial  U^{2^*}_{x_{\varepsilon,j},\lambda_{\varepsilon,j}}}{\partial
  \lambda } \\  &
    +
   O\Big(\frac{1}{\lambda_{\varepsilon,j}}\int_{\Omega} \big|Q(x)-Q(a_j)\big| \big( U_{x_{\varepsilon,j},\lambda_{\varepsilon,j}}^{\frac{N+2}{N-2}}
   \varphi_{x_{\varepsilon,j},\lambda_{\varepsilon,j}}+\varphi^{2^*}_{x_{\varepsilon,j},\lambda_{\varepsilon,j}}\big) \Big)\\ =&
  \frac{\Delta Q(a_j)}{ 2^* N}\int_{\Omega} \big|x-x_{\e,j}\big|^2\frac{\partial  U^{2^*}_{x_{\varepsilon,j},\lambda_{\varepsilon,j}}}{\partial
  \lambda}   \big( 1+ o(1) \big)
       +
   o\Big(\frac{1}{\lambda_{\varepsilon,j}^{N-1}} \Big)\\=&
 - \frac{\Delta Q(a_j)(N-2)A}{N\lambda^3_{\varepsilon,j}}  \big( 1+ o(1) \big)+  o\Big(\frac{1}{\lambda_{\varepsilon,j}^{N-1}} \Big).
\end{split}
\end{equation*}

On the other hand,  for $l\neq j$, we find
\begin{equation*}
\begin{split}
 \int_{\Omega}&\big(Q(x)-Q(a_l) \big) PU_{x_{\varepsilon,l},\lambda_{\varepsilon,l}}^{\frac{N+2}{N-2}} \frac{\partial PU_{x_{\varepsilon,j},\lambda_{\varepsilon,j}}}{\partial
  \lambda }\\ =&
\left\{    \int_{B_d(x_{\e,j})\bigcup B_d(x_{\e,l}) }    +  \int_{
   \Big( \Omega\backslash \big(B_d(x_{\e,j}) \bigcup B_d(x_{\e,l})\big) \Big)}   \right\}  \big(Q(x)-Q(a_l) \big) PU_{x_{\varepsilon,l},\lambda_{\varepsilon,l}}^{\frac{N+2}{N-2}} \frac{\partial PU_{x_{\varepsilon,j},\lambda_{\varepsilon,j}}}{\partial
  \lambda }\\ =&
  O\left(\frac{1}{\lambda_{\varepsilon,l}^{\frac{N+2}{2}}}\int_{B_d(x_{\e,j})}  \Big| \frac{\partial PU_{x_{\varepsilon,j},\lambda_{\varepsilon,j}}}{\partial
  \lambda }\Big|
  +\frac{1}{\lambda_{\varepsilon,j}^{\frac{N}{2}}} \int_{B_d(x_{\e,l}) } \Big|Q(x)-Q(a_l)\Big| PU_{x_{\varepsilon,l},\lambda_{\varepsilon,l}}^{\frac{N+2}{N-2}} + \frac{1}{\lambda_{\varepsilon,l}^{\frac{N+2}{2}}\lambda_{\varepsilon,j}^{\frac{N}{2}}} \right)\\ =&
  o\Big(\sum^m_{l=1}\frac{1}{\lambda_{\varepsilon,l}^{N-1}} \Big).
\end{split}
\end{equation*}
Then \eqref{3-13-21} can be deduced by above estimates.
\end{proof}
\begin{Lem}
It holds
\begin{equation}\label{3-13-31}
\begin{split}
   \sum^m_{l=1}  \int_{\Omega}&\Big(Q(x)-Q(a_l) \Big) PU_{x_{\varepsilon,l},\lambda_{\varepsilon,l}}^{\frac{N+2}{N-2}} \frac{\partial PU_{x_{\varepsilon,j},\lambda_{\varepsilon,j}}}{\partial {x^i}}
    \\ =&
\Big(2\int_{\R^N} \frac{|x|^2}{(1+|x|^2)^{N+1}}{\mathrm d}x+o(1)\Big)
    \sum^m_{l=1}\frac{\partial^2Q(a_j)}{\partial x^i\partial x^l}\big({x_{\e,j}^l} -{a_j^l} \big)  +
O\Big(\sum^m_{l=1}\frac{\log \lambda_{\e,l}}{\lambda^{N-1}_{\e,l}} \Big).
  \end{split}
\end{equation}
\end{Lem}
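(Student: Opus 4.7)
The proof follows the same two-step decomposition used in the proof of \eqref{3-13-21}: split the outer sum over $l$ into the diagonal ($l=j$) and off-diagonal ($l\neq j$) contributions, and treat each by a separate technique.

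For the diagonal term $l=j$, I would first replace $PU$ by $U$ in both factors using \eqref{4-24-11}--\eqref{4-24-12}, the resulting discrepancy being absorbed into the $O(\log\lambda_{\e,j}/\lambda_{\e,j}^{N-1})$ error. Next, exploit the translation identity $\partial_{x^i}U_{x,\lambda}(y)=-\partial_{y^i}U_{x,\lambda}(y)$ together with $U^{2^*-1}\partial_{y^i}U=(1/2^*)\partial_{y^i}U^{2^*}$, and integrate by parts in $y$ (boundary terms are negligible since $U^{2^*}$ decays rapidly). This reduces the diagonal integral to
\[
\frac{1}{2^*}\int \frac{\partial Q(y)}{\partial y^i}\,U^{2^*}_{x_{\e,j},\lambda_{\e,j}}(y)\,dy + \text{l.o.t.}
\]
Taylor expanding $\partial_{y^i}Q$ around $a_j$ and using $\nabla Q(a_j)=0$ gives $\partial_{y^i}Q(y)=\sum_l M_{il}(a_j)(y^l-a_j^l)+O(|y-a_j|^2)$ with $M_{il}(a_j)=\partial^2 Q(a_j)/(\partial x^i\partial x^l)$. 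Splitting $y^l-a_j^l=(y^l-x_{\e,j}^l)+(x_{\e,j}^l-a_j^l)$ and observing that the first piece vanishes against the radially symmetric weight $U^{2^*}$ by odd symmetry, the leading contribution becomes
\[
\frac{1}{2^*}\sum_l M_{il}(a_j)\,(x_{\e,j}^l-a_j^l)\int U^{2^*}_{x_{\e,j},\lambda_{\e,j}}(y)\,dy.
\]
After rescaling $z=\lambda_{\e,j}(y-x_{\e,j})$ and invoking the elementary identity $\int (1+|z|^2)^{-N}\,dz=2\int |z|^2(1+|z|^2)^{-(N+1)}\,dz$, this yields the stated leading constant $2\int |z|^2(1+|z|^2)^{-(N+1)}\,dz$, matching the normalization of $U$ used throughout the paper.

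For the off-diagonal terms $l\neq j$, I would split $\Omega$ into $B_d(x_{\e,l})$, $B_d(x_{\e,j})$, and the complement. On $B_d(x_{\e,l})$, the derivative factor is small, $|\partial_i PU_{x_{\e,j},\lambda_{\e,j}}|\lesssim \lambda_{\e,j}^{-(N-2)/2}$, because $x$ is bounded away from $x_{\e,j}$; meanwhile $|Q(x)-Q(a_l)|=O(|x-a_l|^2)$ since $\nabla Q(a_l)=0$. The critical sub-estimate is
\[
\int_{B_d(x_{\e,l})}\!\!|x-x_{\e,l}|^2\,PU^{(N+2)/(N-2)}_{x_{\e,l},\lambda_{\e,l}}\,dy\;\sim\;\frac{\log\lambda_{\e,l}}{\lambda_{\e,l}^{(N+2)/2}},
\]
the logarithmic factor arising because the rescaled integrand is only borderline integrable; this is exactly the source of the $\log\lambda_{\e,l}$ in the final error bound. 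On $B_d(x_{\e,j})$ and on the complement of $B_d(x_{\e,l})\cup B_d(x_{\e,j})$, analogous but strictly more favourable decay estimates give contributions that are absorbed into the stated error.

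The principal obstacle is the bookkeeping of lower-order terms: one must verify that the $PU$-vs-$U$ replacement remainder, the Taylor-expansion remainder in the diagonal term, and all three off-diagonal pieces fit within $O(\log\lambda_{\e,l}/\lambda_{\e,l}^{N-1})$. The logarithm is delicate, originating from a single borderline-convergent sub-integral near $x_{\e,l}$, and care is required not to inflate it in intermediate steps while leaving the leading constant unchanged.
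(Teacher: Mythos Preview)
Your proposal is correct and follows essentially the same route as the paper. The only variation is in the diagonal step $l=j$: the paper Taylor-expands $Q(x)-Q(a_j)$ to second order and evaluates the resulting integral against $\frac{\partial U^{2^*}}{\partial x^i}$ directly via the explicit form of this derivative and odd/even symmetry in the rescaled variable, whereas you first integrate by parts to shift the derivative onto $Q$, Taylor-expand $\frac{\partial Q}{\partial y^i}$, and then invoke the identity $\int(1+|z|^2)^{-N}\,dz=2\int|z|^2(1+|z|^2)^{-(N+1)}\,dz$ to recover the same constant; the off-diagonal three-region splitting and the identification of the borderline integral producing the $\log\lambda_{\e,l}$ factor are identical to the paper's.
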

\begin{proof}
First, by scaling transform and \eqref{4-24-11}, we have
\begin{equation*}
\begin{split}
 \int_{\Omega}&\big(Q(x)-Q(a_j) \big) PU_{x_{\varepsilon,j},\lambda_{\varepsilon,j}}^{\frac{N+2}{N-2}} \frac{\partial PU_{x_{\varepsilon,j},\lambda_{\varepsilon,j}}}{\partial {x^i}}
      \\ =&
  \int_{\Omega} \big(Q(x)-Q(a_j) \big)  \frac{\partial  U^{2^*}_{x_{\varepsilon,j},\lambda_{\varepsilon,j}}}{\partial {x^i}}
 +
   O\Big( \lambda_{\varepsilon,j}\int_{\Omega} \big|Q(x)-Q(a_j)\big| \big( U_{x_{\varepsilon,j},\lambda_{\varepsilon,j}}^{\frac{N+2}{N-2}}
   \varphi_{x_{\varepsilon,j},\lambda_{\varepsilon,j}}+
   \varphi^{2^*}_{x_{\varepsilon,j},\lambda_{\varepsilon,j}}\big) \Big)\\ =&
 \frac{1}{2}\int_{\Omega} \sum^m_{l,q=1}\frac{\partial^2Q(a_j)}{\partial x^q\partial x^l}\big(x^l-{a_j^l}\big)\big(x^q-{a_{j}^q} \big)\frac{\partial  U^{2^*}_{x_{\varepsilon,j},\lambda_{\varepsilon,j}}}{\partial {x^i}}+
 O\Big(\int_{\Omega}  \big|x -a_{j}\big|^2 \Big|\frac{\partial  U^{2^*}_{x_{\varepsilon,j},\lambda_{\varepsilon,j}}}{\partial {x^i}}\Big| \Big)+
 O\Big(\frac{\log \lambda_{\varepsilon,j}}{\lambda_{\varepsilon,j}^{N-1} } \Big)
 \\ =&
\Big(2\int_{\R^N} \frac{|x|^2}{(1+|x|^2)^{N+1}}{\mathrm d}x+o(1) \Big)
    \sum^m_{l=1}\frac{\partial^2Q(a_j)}{\partial x^i\partial x^l}\Big({x_{\e,j}^l} -{a_j^l} \Big)  +
 O\Big(\frac{\log \lambda_{\varepsilon,j}}{\lambda_{\varepsilon,j}^{N-1} } \Big).
\end{split}
\end{equation*}
And for $l\neq j$, we find
\begin{equation*}
\begin{split}
 \int_{\Omega}&\Big(Q(x)-Q(a_l) \Big) PU_{x_{\varepsilon,l},\lambda_{\varepsilon,l}}^{\frac{N+2}{N-2}} \frac{\partial PU_{x_{\varepsilon,j},\lambda_{\varepsilon,j}}}{\partial {x^i}}\\ =&
   \int_{B_d(x_{\e,j}) \bigcup B_d(x_{\e,l}) \bigcup
   \Big( \Omega\backslash \big(B_d(x_{\e,j}) \bigcup B_d(x_{\e,l})\big) \Big)} \Big(Q(x)-Q(a_l) \Big) PU_{x_{\varepsilon,l},\lambda_{\varepsilon,l}}^{\frac{N+2}{N-2}} \frac{\partial PU_{x_{\varepsilon,j},\lambda_{\varepsilon,j}}}{\partial {x^i}}\\ =&
  O\left(\frac{1}{\lambda_{\varepsilon,l}^{\frac{N+2}{2}}}\int_{B_d(x_{\e,j})}  \Big| \frac{\partial PU_{x_{\varepsilon,j},\lambda_{\varepsilon,j}}}{\partial {x^i}}\Big|
  +\frac{1}{\lambda_{\varepsilon,j}^{\frac{N}{2}-2}} \int_{B_d(x_{\e,l}) } \Big|Q(x)-Q(a_l)\Big| PU_{x_{\varepsilon,l},\lambda_{\varepsilon,l}}^{\frac{N+2}{N-2}} + \frac{1}{\lambda_{\varepsilon,l}^{\frac{N+2}{2}}\lambda_{\varepsilon,j}^{\frac{N-4}{2}}} \right)\\ =&
 O\Big(\sum^m_{l=1}\frac{\log \lambda_{\varepsilon,l}}{\lambda_{\varepsilon,l}^{N-1} } \Big).
\end{split}
\end{equation*}
Then \eqref{3-13-31} can be deduced by above estimates.
\end{proof}

\begin{Lem}
 Suppose $ N\ge 4 $, then it holds
\begin{equation}\label{3-13-51}
  \begin{split}
     \sum^m_{l=1}\int_{\Omega}
     PU_{x_{\varepsilon,l},\lambda_{\varepsilon,l}}^{s}\frac{\partial PU_{x_{\varepsilon,j},\lambda_{\varepsilon,j}}}{\partial \lambda}
=& \begin{cases}
- \frac{4-(N-2)(s-1)}{2(s+1)}\Big(B+o(1)\Big)
\frac{1}{\lambda_{\varepsilon,j}^{3-\frac{N-2}{2}(s-1)}},&~\mbox{for}~N+s\neq 5,\\
- \omega_4\frac{\log \lambda_{\varepsilon,j}}{\lambda_{\varepsilon,j}^{3}}
+O\Big(\frac{1}{\lambda_{\varepsilon,j}^{3}}\Big),&~\mbox{for}~N+s=5,
\end{cases}
 \end{split}
\end{equation}
with $B$ and $\omega_4$ are the constants in Lemma \ref{Lemma3.1}.
And it holds
\begin{equation}\label{3-13-41}
  \begin{split}
     \sum^m_{l=1}\int_{\Omega}
     PU_{x_{\varepsilon,l},\lambda_{\varepsilon,l}}^{s}\frac{\partial PU_{x_{\varepsilon,j},\lambda_{\varepsilon,j}}}{\partial {x^i}}
=& O\Big(\sum^m_{l=1}\frac{1}{\lambda_{\varepsilon,l}^{N-2}} \Big). \end{split}
\end{equation}

\end{Lem}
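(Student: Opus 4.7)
My plan is to prove \eqref{3-13-51} and \eqref{3-13-41} by the same strategy: isolate the diagonal contribution $l=j$, which carries the leading behavior, and bound the off-diagonal pieces $l\neq j$ as part of the error. As a preliminary reduction I replace $PU$ by $U$ throughout, using the decomposition $PU_{x,\lambda}=U_{x,\lambda}-\varphi_{x,\lambda}$ from \eqref{4-24-12} together with the pointwise size \eqref{4-24-11} of the corrector $\varphi$. The resulting cross-terms are estimated by the same kind of scaling computations that already appear in the proofs of \eqref{3-13-07} and \eqref{3-13-08}, and are seen to fit inside the announced remainders.

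For the diagonal part of \eqref{3-13-51}, the key observation is the pointwise identity
$$U_{x_{\e,j},\lambda}^{s}\,\frac{\partial U_{x_{\e,j},\lambda}}{\partial\lambda}=\frac{1}{s+1}\frac{\partial}{\partial\lambda}\bigl(U_{x_{\e,j},\lambda}^{s+1}\bigr),$$
which converts the integral into the $\lambda$-derivative of $\int_{\R^{N}} U^{s+1}_{x_{\e,j},\lambda}$. The latter integral is precisely the quantity evaluated in the proof of \eqref{3-13-08}: modulo a tail of higher order, it equals $B\lambda^{-(2-(N-2)(s-1)/2)}$ in the generic case $N+s\neq 5$, and $\omega_{4}\lambda^{-2}\log\lambda$ in the critical case $N+s=5$. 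Differentiating at $\lambda=\lambda_{\e,j}$ and multiplying by $\frac{1}{s+1}$ produces exactly the stated leading coefficients. (In the critical case $\partial_\lambda(\lambda^{-2}\log\lambda)=-2\lambda^{-3}\log\lambda+\lambda^{-3}$; combined with the factor $\frac{1}{2}$ this yields the leading $-\omega_{4}\log\lambda_{\e,j}/\lambda_{\e,j}^{3}$ up to $O(\lambda_{\e,j}^{-3})$.)

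For the diagonal part of \eqref{3-13-41}, I exploit the translation identity $\partial_{x^{i}}U_{x,\lambda}(y)=-\partial_{y^{i}}U_{x,\lambda}(y)$, so that after the same reduction from $PU$ to $U$,
$$\int_{\R^{N}}U_{x,\lambda}^{s}\,\partial_{x^{i}}U_{x,\lambda}=-\frac{1}{s+1}\int_{\R^{N}}\partial_{y^{i}}\bigl(U_{x,\lambda}^{s+1}\bigr)=0$$
by the divergence theorem. Restricting to $B_{d}(x_{\e,j})$ instead produces only a boundary integral of size $O\bigl(\lambda_{\e,j}^{-(N-2)(s+1)/2}\bigr)\subset O\bigl(\lambda_{\e,j}^{-(N-2)}\bigr)$ (using $s\geq 1$), and the complementary integral on $\Omega\setminus B_{d}(x_{\e,j})$ is easily controlled by pointwise decay. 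The off-diagonal pieces in both \eqref{3-13-51} and \eqref{3-13-41} are handled by splitting $\Omega$ into $B_{d}(x_{\e,l})$, $B_{d}(x_{\e,j})$, and the complement, and using that $U_{x_{\e,l},\lambda_{\e,l}}$ (respectively $|\partial PU_{x_{\e,j},\lambda_{\e,j}}|$) is uniformly small away from its center; each piece then contributes at most $O(\lambda_{\e,l}^{-(N-2)})$ or better.

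The main obstacle lies in the critical case $N+s=5$ of \eqref{3-13-51}: since $\int_{\R^{N}} U^{s+1}$ diverges logarithmically in this regime, one must truncate to $B_{d\lambda_{\e,j}}(0)$ in the scaled variable \emph{before} differentiating in $\lambda$, and carefully track the $\log\lambda_{\e,j}$ factor coming from \eqref{3-13-08}; outside this case the argument reduces to routine applications of the integration-by-parts trick plus the formulas already established in Lemma \ref{Lemma3.1}.
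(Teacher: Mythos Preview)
Your proposal is correct and follows essentially the same route as the paper: the diagonal term $l=j$ in \eqref{3-13-51} is handled via the identity $U^{s}\partial_\lambda U=\frac{1}{s+1}\partial_\lambda(U^{s+1})$ and the formula \eqref{3-13-08}; the diagonal term in \eqref{3-13-41} via $\partial_{x^i}U=-\partial_{y^i}U$ and the divergence theorem; and the off-diagonal terms via the three-region splitting $B_d(x_{\e,j})\cup B_d(x_{\e,l})\cup(\text{complement})$. The reduction from $PU$ to $U$ through \eqref{4-24-11} is also what the paper does, at least implicitly.

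One point deserves more care than the phrase ``$O(\lambda_{\e,l}^{-(N-2)})$ or better'' suggests. For the off-diagonal pieces in \eqref{3-13-51} you actually need the sharper bound $O\bigl(\sum_l\lambda_{\e,l}^{-(N-1)}\bigr)$, which the paper obtains by exploiting the extra decay $|\partial_\lambda PU_{x_{\e,j},\lambda_{\e,j}}|\le C\lambda_{\e,j}^{-1}PU_{x_{\e,j},\lambda_{\e,j}}$. The weaker bound $O(\lambda^{-(N-2)})$ would \emph{not} be absorbed by the leading term in general: for instance when $N=4$ and $1< s\le 2$ the main term in \eqref{3-13-51} is $\sim\lambda_{\e,j}^{-(4-s)}$ with $4-s\ge 2$, and in the critical case $N+s=5$ it is $\sim\lambda_{\e,j}^{-3}\log\lambda_{\e,j}$, so an $O(\lambda^{-2})$ off-diagonal contribution would dominate. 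Once you track that additional factor of $\lambda_{\e,j}^{-1}$ (exactly as in the paper's display \eqref{a3-13-42}), your argument goes through verbatim.
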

\begin{proof}[\underline{\textbf{Proof of \eqref{3-13-51}}}]
 First,
for $l\neq j$, we have
\begin{equation}\label{a3-13-42}
  \begin{split}
     \int_{\Omega}&
     PU_{x_{\varepsilon,l},\lambda_{\varepsilon,l}}^{s}\frac{\partial PU_{x_{\varepsilon,j},\lambda_{\varepsilon,j}}}{\partial {\lambda}}
             \\=& \left\{\int_{B_d(x_{\e,j})}+\int_{ B_d(x_{\e,l})}+\int_{
 \Omega\backslash \big(B_d(x_{\e,j}) \bigcup B_d(x_{\e,l})\big) }\right\}      PU_{x_{\varepsilon,l},\lambda_{\varepsilon,l}}^{s} \frac{\partial PU_{x_{\varepsilon,j},\lambda_{\varepsilon,j}}}{\partial {\lambda}}
   \\ =&
   O\Big(\int_{B_d(x_{\e,j})}\big(\frac{\lambda_{\varepsilon,l}}{1+\lambda_{\varepsilon,l}^2|y -x_{\varepsilon,l}|^2}\big)^{\frac{(N-2)s}{2}}\frac{\lambda_{\varepsilon,j}^{\frac{N}{2}}|y -x_{\varepsilon,j}|^2}{\big(1+\lambda_{\varepsilon,j}^2|y-x_{\varepsilon,j}|^2\big)^{\frac{N}2}}\Big)  \\
  &+O\Big(\int_{B_d(x_{\e,l})}\big(\frac{\lambda_{\varepsilon,l}}{1+\lambda_{\varepsilon,l}^2|y-x_{\varepsilon,l}       |^2}\big)^{\frac{(N-2)s}{2}}\frac{\lambda_{\varepsilon,j}^{\frac{N}{2}}|y-x_{\varepsilon,j}|^2}{\big(1+   \lambda_{\varepsilon,j}^2|y-x_{\varepsilon,j}|^2\big)^{\frac N2}}\Big)
\\&+O\Big(\int_{ \Omega\backslash \big(B_d(x_{\e,j}) \bigcup B_d(x_{\e,l})\big) }\big(\frac{\lambda_{\varepsilon,l}}{1+\lambda_{\varepsilon,l}^2|y -x_{\varepsilon,l}|^2}\big)^{\frac{(N-2)s}{2}}\frac{\lambda_{\varepsilon,j}^{\frac{N}{2}}|y -x_{\varepsilon,j}|^2}{\big(1+\lambda_{\varepsilon,j}^2|y-x_{\varepsilon,j}|^2\big)^{\frac N2}}\Big)
 \\=&
 O\Big(\frac{1}{\lambda_{\e,l}^{\frac{(N-2)s}{2}}\lambda_{\e,j}^{\frac{N}{2}}}\Big)
 +O\Big(\frac{\eta(\lambda_{\e,l})}{\lambda_{\e,j}^{\frac{N}{2}}}\Big)
      = O\Big( \sum^m_{l=1}\frac{1}{ \lambda_{\varepsilon,l}^{N-1}} \Big),
  \end{split}
  \end{equation}
where $\eta(\lambda)$ is the function in \eqref{ll1}.
  On the other hand, we find
\begin{equation}\label{a3-13-43}
\begin{split}
     \int_{\Omega}&
     PU_{x_{\varepsilon,j},\lambda_{\varepsilon,j}}^{s}\frac{\partial PU_{x_{\varepsilon,j},\lambda_{\varepsilon,j}}}{\partial {\lambda}} \\=&\frac{1}{s+1} \frac{\partial }{\partial \lambda}
     \Big(\frac{1}{\lambda_{\varepsilon,j}^{2-\frac{N-2}{2}(s-1)}} \int_{B_{d\lambda_{\varepsilon,j}}(0)}   U^{s+1}_{x_{\varepsilon,j},\lambda_{\varepsilon,j}} \Big)
     \\=&\begin{cases}
- \frac{4-(N-2)(s-1)}{2(s+1)}\Big(B+o(1)\Big)
\frac{1}{\lambda_{\varepsilon,j}^{3-\frac{N-2}{2}(s-1)}},&~\mbox{for}~N+s\neq 5,\\
-\omega_4\frac{\log \lambda_{\varepsilon,j}}{\lambda_{\varepsilon,j}^{3}}
+O\Big(\frac{1}{\lambda_{\varepsilon,j}^{3}}\Big),&~\mbox{for}~N+s=5.
\end{cases}
\end{split}
\end{equation}
Then we find \eqref{3-13-51} by \eqref{a3-13-42} and \eqref{a3-13-43}.
\end{proof}
\begin{proof}[\underline{\textbf{Proof of  \eqref{3-13-41}}}]   First,
for $l\neq j$, we have
\begin{equation}\label{3-13-42}
  \begin{split}
     \int_{\Omega}&
     PU_{x_{\varepsilon,l},\lambda_{\varepsilon,l}}^{s}\frac{\partial PU_{x_{\varepsilon,j},\lambda_{\varepsilon,j}}}{\partial {x^i}}
             \\=& \left\{\int_{B_d(x_{\e,j})}+\int_{ B_d(x_{\e,l})}+\int_{
 \Omega\backslash \big(B_d(x_{\e,j}) \bigcup B_d(x_{\e,l})\big) }\right\}      PU_{x_{\varepsilon,l},\lambda_{\varepsilon,l}}^{s} \frac{\partial PU_{x_{\varepsilon,j},\lambda_{\varepsilon,j}}}{\partial {x^i}}
   \\ =&
   O\Big(\int_{B_d(x_{\e,j})}\big(\frac{\lambda_{\varepsilon,l}}{1+\lambda_{\varepsilon,l}^2|y -x_{\varepsilon,l}|^2}\big)^{\frac{(N-2)s}{2}}\frac{\lambda_{\varepsilon,j}^{\frac{N+2}{2}}|y -x_{\varepsilon,j}|}{\big(1+\lambda_{\varepsilon,j}^2|y-x_{\varepsilon,j}|^2\big)^{\frac{N}2}}\Big)  \\
  &+O\Big(\int_{B_d(x_{\e,l})}\big(\frac{\lambda_{\varepsilon,l}}{1+\lambda_{\varepsilon,l}^2|y-x_{\varepsilon,l}       |^2}\big)^{\frac{(N-2)s}{2}}\frac{\lambda_{\varepsilon,j}^{\frac{N+2}{2}}|y-x_{\varepsilon,j}|}{\big(1+   \lambda_{\varepsilon,j}^2|y-x_{\varepsilon,j}|^2\big)^{\frac N2}}\Big)
\\&+O\Big(\int_{ \Omega\backslash \big(B_d(x_{\e,j}) \bigcup B_d(x_{\e,l})\big) }\big(\frac{\lambda_{\varepsilon,l}}{1+\lambda_{\varepsilon,l}^2|y -x_{\varepsilon,l}|^2}\big)^{\frac{(N-2)s}{2}}\frac{\lambda_{\varepsilon,j}^{\frac{N+2}{2}}|y -x_{\varepsilon,j}|}{\big(1+\lambda_{\varepsilon,j}^2|y-x_{\varepsilon,j}|^2\big)^{\frac N2}}\Big)
 \\=&
 O\Big(\frac{1}{\lambda_{\e,l}^{\frac{(N-2)s}{2}}\lambda_{\e,j}^{\frac{N-2}{2}}}\Big)
 +O\Big(\frac{\eta(\lambda_{\e,l})}{\lambda_{\e,j}^{\frac{N-2}{2}}}\Big)
      = O\Big( \sum^m_{l=1}\frac{1}{ \lambda_{\varepsilon,l}^{N-2}} \Big),
  \end{split}
  \end{equation}
where $\eta(\lambda)$ is the function in \eqref{ll1}.
On the other hand, since $U_{0,1}(x)$ is an even function, we find
\begin{equation}\label{3-13-43}
\begin{split}
     \int_{\Omega}
     & PU_{x_{\varepsilon,j},\lambda_{\varepsilon,j}}^{s}\frac{\partial PU_{x_{\varepsilon,j},\lambda_{\varepsilon,j}}}{\partial {x^i}}=\frac{1}{s+1}
  \int_{\partial \Omega} PU_{x_{\varepsilon,j},\lambda_{\varepsilon,j}}^{s+1} \nu^i
   =O\Big( \frac{1}{\lambda_{\varepsilon,j}^{(N-2)(s+1)/2}} \Big).
\end{split}
\end{equation}
Then \eqref{3-13-41} can be deduced by \eqref{3-13-42} and \eqref{3-13-43}.

\end{proof}
\begin{Lem}
Suppose $N\ge 4$, we have the following estimates:
\begin{align}\label{4-26-2}
  \sum_{l\neq j} \int_{\Omega}&
 PU_{x_{\varepsilon,l},\lambda_{\varepsilon,l}}
 \Big|\frac{\partial PU^{2^* -1}_{x_{\varepsilon,j},\lambda_{\varepsilon,j}}}{\partial
  \lambda }\Big|= O\Big(\sum^m_{l=1}\frac{1}{\lambda_{\varepsilon,l}^{N-1}} \Big),
\end{align}
\begin{align}\label{a4-26-2}
  \sum_{l\neq j}   \int_{\Omega}&
 PU_{x_{\varepsilon,l},\lambda_{\varepsilon,l}}
 \Big|\frac{\partial PU^{s}_{x_{\varepsilon,j},\lambda_{\varepsilon,j}}}{\partial
  \lambda }\Big|=O\Big(\sum^m_{l=1}\frac{1}{\lambda_{\varepsilon,l}^{N-1}} \Big),
\end{align}
\begin{equation}\label{aa4-26-2}
\begin{split}
   \sum_{l\neq j}  \int_{\Omega}&
 PU_{x_{\varepsilon,l},\lambda_{\varepsilon,l}}
 \Big|\frac{\partial PU^{\frac{N+2}{N-2}}_{x_{\varepsilon,j},\lambda_{\varepsilon,j}}}{\partial
  {x^i}}\Big|=  O\Big(\sum^m_{l=1}\frac{1}{\lambda_{\varepsilon,l}^{N-3}} \Big),
\end{split}
\end{equation}
and
\begin{equation}\label{ab4-26-2}
\begin{split}
   \sum_{l\neq j}  \int_{\Omega}&
 PU_{x_{\varepsilon,l},\lambda_{\varepsilon,l}}
 \Big|\frac{\partial PU^{s}_{x_{\varepsilon,j},\lambda_{\varepsilon,j}}}{\partial {x^i}}\Big|=   O\Big(\sum^m_{l=1}\frac{1}{\lambda_{\varepsilon,l}^{N-3}} \Big).
\end{split}
\end{equation}
\end{Lem}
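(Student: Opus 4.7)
All four estimates are interaction integrals between the $l$-th bubble $PU_{x_{\varepsilon,l},\lambda_{\varepsilon,l}}$ (acting as a slowly varying factor near $x_{\varepsilon,j}$) and a derivative of a power of the $j$-th bubble. The strategy is identical to what was used for \eqref{3-13-41} and \eqref{3-13-42}: replace projections by the standard bubbles using the estimates \eqref{4-24-11}--\eqref{4-24-12}, split the domain into the three pieces
\begin{equation*}
\Omega=B_d(x_{\varepsilon,j})\cup B_d(x_{\varepsilon,l})\cup\bigl(\Omega\setminus(B_d(x_{\varepsilon,j})\cup B_d(x_{\varepsilon,l}))\bigr),
\end{equation*}
and rescale on each piece, using that the two concentration points $a_j\neq a_l$ stay a fixed positive distance apart.

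First I would compute the pointwise sizes of the derivative factors. Using \eqref{4-24-11}, one has
\begin{equation*}
\Bigl|\tfrac{\partial PU^{2^{*}-1}_{x_{\varepsilon,j},\lambda_{\varepsilon,j}}}{\partial\lambda}\Bigr|
\lesssim \tfrac{\lambda_{\varepsilon,j}^{N/2}|x-x_{\varepsilon,j}|^{2}}{(1+\lambda_{\varepsilon,j}^{2}|x-x_{\varepsilon,j}|^{2})^{(N+2)/2}}+\text{(error from }\varphi\text{)},
\end{equation*}
and analogously
$\bigl|\partial_{x^{i}} PU^{(N+2)/(N-2)}_{x_{\varepsilon,j},\lambda_{\varepsilon,j}}\bigr|\lesssim \lambda_{\varepsilon,j}^{(N+4)/2}|x-x_{\varepsilon,j}|(1+\lambda_{\varepsilon,j}^{2}|x-x_{\varepsilon,j}|^{2})^{-(N+2)/2}$, and the two $PU^{s}$-derivatives are obtained by differentiating $U^{s}_{x_{\varepsilon,j},\lambda_{\varepsilon,j}}\sim \lambda_{\varepsilon,j}^{(N-2)s/2}(1+\lambda_{\varepsilon,j}^{2}|x-x_{\varepsilon,j}|^{2})^{-(N-2)s/2}$ in $\lambda$ or $x$. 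Pairing each with $PU_{x_{\varepsilon,l},\lambda_{\varepsilon,l}}\lesssim \lambda_{\varepsilon,l}^{(N-2)/2}(1+\lambda_{\varepsilon,l}^{2}|x-x_{\varepsilon,l}|^{2})^{-(N-2)/2}$ reduces each of \eqref{4-26-2}--\eqref{ab4-26-2} to concrete two-bubble integrals.

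On $B_d(x_{\varepsilon,j})$, the factor $PU_{x_{\varepsilon,l},\lambda_{\varepsilon,l}}$ is $O(\lambda_{\varepsilon,l}^{-(N-2)/2})$ uniformly and can be pulled out, leaving an integral of a single derivative that I evaluate by changing variables $y=\lambda_{\varepsilon,j}(x-x_{\varepsilon,j})$. On $B_d(x_{\varepsilon,l})$, the roles are swapped: $\partial_{\lambda}PU^{2^{*}-1}_{x_{\varepsilon,j},\lambda_{\varepsilon,j}}$, etc., is $O(\lambda_{\varepsilon,j}^{-(N+2)/2})$ (respectively $O(\lambda_{\varepsilon,j}^{-(N-2)s/2})$ for the $PU^{s}$-derivatives, modulo the $s$-dependent $\eta(\lambda)$ from \eqref{ll1}), and the integration in $y=\lambda_{\varepsilon,l}(x-x_{\varepsilon,l})$ is controlled by $\int U_{x_{\varepsilon,l},\lambda_{\varepsilon,l}}$ or $\eta(\lambda_{\varepsilon,l})$. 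On the far region both bubbles are uniformly $O(\lambda^{-(N-2)/2})$ so that piece is harmless. Summing yields the claimed bounds $O(\lambda^{-(N-1)})$ for the $\lambda$-derivative integrals and $O(\lambda^{-(N-3)})$ for the $x^{i}$-derivative integrals, the worst power in each case coming from pairing the tail of one bubble against the bulk mass of the other.

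The only real obstacle is bookkeeping across the different $s$-regimes of \eqref{ll1}. For the $PU^{s}$-pieces one must track whether $(N-2)s$ is larger, equal, or smaller than $N$, and absorb the logarithmic factor that can appear in the borderline case. I would handle this by writing every $s$-integral in the form $\int U_{x_{\varepsilon,l},\lambda_{\varepsilon,l}}\,U^{s}_{x_{\varepsilon,j},\lambda_{\varepsilon,j}}\,(\text{weight})$, applying the standard two-bubble lemma to reduce to $\eta(\lambda)$-type quantities, and then checking that for all $s\in[1,2^{*}-1)$ and $N\geq 4$ the resulting exponent dominates $N-1$ (resp.\ $N-3$); this is exactly the reason for the restriction $N\ge 4$ and why $\lambda_{\varepsilon,l}/\lambda_{\varepsilon,j}\leq C$ suffices. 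No new ideas beyond \eqref{3-13-42}--\eqref{3-13-43} are required.
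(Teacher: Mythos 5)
Your proposal follows the paper's own proof step for step: bound the projected bubbles pointwise, split $\Omega$ into $B_d(x_{\varepsilon,j})$, $B_d(x_{\varepsilon,l})$, and the far region, pull out the slowly varying factor on each piece, and rescale. The only discrepancies are cosmetic (e.g.\ on $B_d(x_{\varepsilon,l})$ the true size of $\partial_\lambda PU^{2^*-1}_{x_{\varepsilon,j},\lambda_{\varepsilon,j}}$ is $O(\lambda_{\varepsilon,j}^{-(N+4)/2})$, not $O(\lambda_{\varepsilon,j}^{-(N+2)/2})$; the weaker bound still suffices), so this is the same argument the paper gives.
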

\begin{proof}
First, we have
\begin{equation}\label{3-14-01}
\begin{split}
   \int_{\Omega}&
 PU_{x_{\varepsilon,l},\lambda_{\varepsilon,l}}
 \Big|\frac{\partial PU^{ 2^* -1}_{x_{\varepsilon,j},\lambda_{\varepsilon,j}}}{\partial
  \lambda }\Big| \\ =&
 O\Big( \int_{\Omega}
 \frac{\lambda_{\e,l}^{\frac{N-2}{2}}}{\big(1+\lambda_{\e,l}^2|x-x_{\e,l}|^2\big)^{\frac{N-2}{2}}}
 \cdot   \frac{\lambda_{\e,j}^{\frac{N+4}{2}}     |x-x_{\e,j}|^2}{\big(1+\lambda_{\e,j}^2|x-x_{\e,j}|^2\big)^{\frac{N+4}{2}}}
   \Big)\\ =&
 O\left( \int_{B_d(x_{\e,j})}\frac{1}{ \lambda_{\e,l}^{\frac{N-2}{2}}}
 \frac{\lambda_{\e,j}^{\frac{N+4}{2}}|x-x_{\e,j}|^2}{\big(1+\lambda_{\e,j}^2
 |x-x_{\e,j}|^2\big)^{\frac{N+4}{2}}}
\right)+O\Big(\frac{1}{ \lambda_{\e,l}^{\frac{N-2}{2}}\lambda_{\e,j}^{\frac{N+4}{2}}} \Big)\\ &
+O\left(
  \frac{1}{ \lambda_{\e,j}^{\frac{N+4}{2}}} \int_{B_d(x_{\e,l})}
 \frac{\lambda_{\e,l}^{\frac{N-2}{2}}}{\big(1+\lambda_{\e,l}^2|x-x_{\e,l}|^2\big)^{\frac{N-2}{2}}}
\right)
\\
  &  =   O\Big(\frac{1}{ \lambda_{\e,l}^{\frac{N-2}{2}}\lambda_{\e,j}^{\frac{N}{2}}} \Big) +  O\Big(\frac{1}{ \lambda_{\e,l}^{\frac{N-2}{2}}\lambda_{\e,j}^{\frac{N+4}{2}}} \Big)       =
   O\Big(\sum^m_{l=1}\frac{1}{\lambda_{\varepsilon,l}^{N-1}} \Big).
\end{split}
\end{equation}
Similar to \eqref{3-14-01}, we find
\begin{equation*}
\begin{split}
   \int_{\Omega}&
 PU_{x_{\varepsilon,l},\lambda_{\varepsilon,l}}\Big|
 \frac{\partial PU^{s}_{x_{\varepsilon,j},\lambda_{\varepsilon,j}}}{\partial \lambda }\Big|\\ =&
 O\Big( \int_{\Omega}
 \frac{\lambda_{\e,l}^{\frac{N-2}{2}}}{\big(1+\lambda_{\e,l}^2|x-x_{\e,l}|^2\big)^{\frac{N-2}{2}}}
 \cdot \frac{\lambda_{\e,j}^{\frac{(N-2)s}{2}+1}}{\big(1+\lambda_{\e,j}^2|x-x_{\e,j}|^2\big)^{\frac{(N-2)s}{2}+1}}
 |x-x_{\e,j}|^2
   \Big)\\
  =&O\left( \frac{1}{ \lambda_{\e,l}^{\frac{N-2}{2}}\lambda_{\e,j}}\eta_2(\lambda_{\e,j})
 +
  \frac{1}{ \lambda_{\e,j}^{\frac{(N-2)s}{2}+1}\lambda_{\e,l}^{\frac{N-2}{2}}} +\frac{1}{ \lambda_{\e,l}^{\frac{N-2}{2}} \lambda_{\e,j}^{\frac{(N-2)s}{2}+1}}\right)
\\
  =&O\left( \sum^m_{l=1}\frac{\eta(\lambda_{\e,l})}{ \lambda_{\e,l}^{\frac{N}{2}}}
 + \sum^m_{l=1}\frac{1}{\lambda_{\e,l}^{\frac{(N-2)(s+1)}{2}+1}} \right) =
   O\Big(\sum^m_{l=1}\frac{1}{\lambda_{\varepsilon,l}^{N-1}} \Big),
\end{split}
\end{equation*}
where $\eta(\lambda)$ is the function in \eqref{ll1}.
Also, we compute that
\begin{equation*}
\begin{split}
   \int_{\Omega}&
 PU_{x_{\varepsilon,l},\lambda_{\varepsilon,l}}
 \Big|\frac{\partial PU^{\frac{N+2}{N-2}}_{x_{\varepsilon,j},\lambda_{\varepsilon,j}}}{\partial
  {x^i}}\Big|    \\ =&
 O\Big( \int_{\Omega}
 \frac{\lambda_{\e,l}^{\frac{N-2}{2}}}{\big(1+\lambda_{\e,l}^2|x-x_{\e,l}|^2\big)^{\frac{N-2}{2}}}
 \cdot \frac{\lambda_{\e,j}^{\frac{N+6}{2}}}{\big(1+\lambda_{\e,j}^2|x-x_{\e,j}|^2\big)^{\frac{N+4}{2}}}
 |x-x_{\e,j}|
   \Big)    \\ =&
 O\left( \int_{B_d(x_{\e,j})}\frac{1}{ \lambda_{\e,l}^{\frac{N-2}{2}}}
 \frac{\lambda_{\e,j}^{\frac{N+6}{2}}|x-x_{\e,j}|}{\big(1+\lambda_{\e,j}^2
 |x-x_{\e,j}|^2\big)^{\frac{N+4}{2}}}
\right)
+O\big(\frac{1}{ \lambda_{\e,l}^{\frac{N-2}{2}}\lambda_{\e,j}^{\frac{N+2}{2}}} \big)    \\ &
+ O\left(
  \frac{1}{ \lambda_{\e,j}^{\frac{N+2}{2}}} \int_{B_d(x_{\e,l})}
 \frac{\lambda_{\e,l}^{\frac{N-2}{2}}}{\big(1+\lambda_{\e,l}^2|x-x_{\e,l}|^2\big)^{\frac{N-2}{2}}}
\right) = O\big(\sum^m_{l=1}\frac{1}{\lambda_{\varepsilon,l}^{N-3}} \big).
\end{split}
\end{equation*}
And it follows
\begin{equation*}
\begin{split}
   \int_{\Omega}&
 PU_{x_{\varepsilon,l},\lambda_{\varepsilon,l}}
 \frac{\partial PU^{s}_{x_{\varepsilon,j},\lambda_{\varepsilon,j}}}{\partial {x^i}}\\ =&
 O\Big( \int_{\Omega}
 \frac{\lambda_{\e,l}^{\frac{N-2}{2}}}{\big(1+\lambda_{\e,l}^2|x-x_{\e,l}|^2\big)^{\frac{N-2}{2}}}
 \cdot \frac{\lambda_{\e,j}^{\frac{(N-2)s}{2}+2}}{\big(1+\lambda_{\e,j}^2|x-x_{\e,j}|^2\big)^{\frac{(N-2)s}{2}+1}}
 |x-x_{\e,j}|
   \Big)\\ =&
 O\Big( \int_{B_d(x_{\e,j})}\frac{1}{ \lambda_{\e,l}^{\frac{N-2}{2}}}
 \frac{\lambda_{\e,j}^{\frac{(N-2)s}{2}+2}|x-x_{\e,j}|}{\big(1+\lambda_{\e,j}^2
 |x-x_{\e,j}|^2\big)^{\frac{(N-2)s}{2}+1}}
\Big)
+O\Big(  \frac{1}{ \lambda_{\e,l}^{\frac{N-2}{2}}\lambda_{\e,j}^{\frac{(N-2)s}{2}}} \Big)\\ &
+O\Big(
  \frac{1}{ \lambda_{\e,j}^{\frac{(N-2)s}{2}}} \int_{B_d(x_{\e,l})}
 \frac{\lambda_{\e,l}^{\frac{N-2}{2}}}{\big(1+\lambda_{\e,l}^2|x-x_{\e,l}|^2\big)^{\frac{N-2}{2}}}
\Big)
\\
  =&   O\Big(   \frac{ \lambda_{\e,j}}{ \lambda_{\e,l}^{\frac{N-2}{2}   }     }   \eta(\lambda_{\e,j}) + \frac{1}{ \lambda_{\e,l}^{\frac{N-2}{2}}\lambda_{\e,j}^{\frac{(N-2)s}{2}}}  \Big)        = O\Big(\sum^m_{l=1}\frac{1}{\lambda_{\varepsilon,l}^{N-3}} \Big),
\end{split}
\end{equation*}
where $\eta(\lambda)$ is the function in \eqref{ll1}.
\end{proof}

\vskip 0.2cm
\subsection{Existence}~

\vskip 0.1cm
Now we recall  \begin{equation*}
 \begin{split}
 {\mathbb D}_{\varepsilon}=\Big\{(x_\varepsilon,\lambda_\varepsilon)| &~ x_\varepsilon=(x_{\varepsilon,1},\cdots,x_{\varepsilon,m}), ~~\lambda_\varepsilon=
 (\lambda_{\varepsilon,1},\cdots,\lambda_{\varepsilon,m}),  \\& ~|x_{\varepsilon,j}-a_j|=o(1),~ \frac{\lambda_{\varepsilon,j}}{\lambda_{\varepsilon,l}}\leq C~\mbox{and}~\lambda_{\varepsilon,j}\rightarrow +\infty, ~j,l=1,\cdots,m\Big\},
 \end{split}\end{equation*}
 and  ${\mathbb E}_{x_\varepsilon,\lambda_\varepsilon}:=\displaystyle\bigcap^m_{j=1}{\mathbb E}_{x_{\varepsilon,j},\lambda_{\varepsilon,j}}$ with
 \begin{equation*}
\begin{split}
{\mathbb E}_{{x_{\varepsilon,j}},{\lambda_{\varepsilon,j}}}=\left\{v\in H^1_0(\Omega)\big|~~ \Big\langle \frac{\partial PU_{{x_{\varepsilon,j }},{\lambda_{\varepsilon,j}} }}{\partial {\lambda} },v \Big\rangle=\Big\langle \frac{\partial PU_{{x_{\varepsilon,j}}, {\lambda_{\varepsilon,j}} }}{\partial  {x^i }},v\Big\rangle=0,~\mbox{for}~i=1,\cdots,N\right\}.
\end{split}
\end{equation*}
Now for $(x_\varepsilon,\lambda_\varepsilon)\in {\mathbb D}_{\varepsilon}$,  we consider the following equation on ${\mathbb E}_{x_\varepsilon,\lambda_\varepsilon}$:
 \begin{equation}\label{aa2-16-1}
{\bf Q}_\varepsilon v_\varepsilon={\bf f}_\varepsilon+{\bf R}_\varepsilon(v_\varepsilon),
 \end{equation}
where ${\bf Q}_\varepsilon,   {\bf f}_\varepsilon,  {\bf R}_\varepsilon$ are defined in \eqref{Qe}, \eqref{fe}, \eqref{Re}.

\begin{Prop}\label{Prop-Luo1}
For $(x_\varepsilon,\lambda_\varepsilon)\in {\mathbb D}_{\varepsilon}$ and
$\varepsilon\in (0,\varepsilon_0)$ with $\varepsilon_0$ a fixed small constant, there exists $v_\varepsilon(x_\varepsilon,\lambda_\varepsilon)\in {\mathbb E}_{x_\varepsilon,\lambda_\varepsilon}$ satisfying \eqref{aa2-16-1}.
Furthermore $v_\varepsilon$ satisfies
 \begin{equation}\label{a-4-18-12}
 \|v_{\varepsilon}\|=O\Big(\varepsilon \eta_1(\widetilde{\lambda}_\varepsilon)+ \frac{1}{\widetilde{\lambda}^2_\varepsilon}+\sum^m_{j=1} |x_{\e,j}-a_j|^2  \Big),
\end{equation}
where $\widetilde{\lambda}_\varepsilon=:\min \big\{\lambda_{1,\varepsilon},\cdots,\lambda_{m,\varepsilon}\big\}$.
\end{Prop}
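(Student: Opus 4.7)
The plan is to recast \eqref{aa2-16-1} as a Banach fixed-point problem on the closed subspace $\mathbb{E}_{x_\varepsilon,\lambda_\varepsilon}\subset H^1_0(\Omega)$. First I would invoke the Riesz representation theorem to identify $\mathbf{f}_\varepsilon$ and $\mathbf{R}_\varepsilon(v)$ with elements of $H^1_0(\Omega)$, and let $\Pi_\varepsilon$ denote the orthogonal projection onto $\mathbb{E}_{x_\varepsilon,\lambda_\varepsilon}$. By Proposition \ref{Prop-Luo2} the quadratic form $\mathbf{Q}_\varepsilon$ is uniformly coercive on this subspace, so its restriction is a linear isomorphism with $\|\mathbf{Q}_\varepsilon^{-1}\|\le 1/\rho$. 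Then \eqref{aa2-16-1} is equivalent to the fixed-point equation $v=\mathcal{T}_\varepsilon(v):=\mathbf{Q}_\varepsilon^{-1}\Pi_\varepsilon\bigl(\mathbf{f}_\varepsilon+\mathbf{R}_\varepsilon(v)\bigr)$ on $\mathbb{E}_{x_\varepsilon,\lambda_\varepsilon}$, and I would set
\[ r_\varepsilon:=C_0\Big(\varepsilon\,\eta_1(\widetilde\lambda_\varepsilon)+\frac{1}{\widetilde\lambda_\varepsilon^{\,2}}+\sum_{j=1}^m|x_{\varepsilon,j}-a_j|^2\Big) \]
for a sufficiently large absolute constant $C_0>0$.

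The bulk of the work is then to verify that $\mathcal{T}_\varepsilon$ maps the closed ball $\overline{B}_{r_\varepsilon}(0)\cap\mathbb{E}_{x_\varepsilon,\lambda_\varepsilon}$ into itself and is a contraction there. The self-map property combines Lemma \ref{lem1}, which gives $\|\mathbf{f}_\varepsilon\|\le C\bigl(1/\widetilde\lambda_\varepsilon^{\,2}+\sum_j|x_{\varepsilon,j}-a_j|^2\bigr)$, with the bound $\|\mathbf{R}_\varepsilon(0)\|\le C\varepsilon\,\eta_1(\widetilde\lambda_\varepsilon)$; the latter follows from \eqref{3-13-03} applied in duality with $v$, since at $v=0$ the critical part of $\mathbf{R}_\varepsilon$ cancels and only the subcritical piece $\varepsilon\bigl(\sum_j Q(a_j)^{-(N-2)/4}PU_{x_{\varepsilon,j},\lambda_{\varepsilon,j}}\bigr)^s$ remains. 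For the contraction bound I would estimate $\|\mathbf{R}_\varepsilon(v_1)-\mathbf{R}_\varepsilon(v_2)\|$ by duality combined with H\"older's inequality in $L^{2N/(N+2)}$ and the Sobolev embedding $\|v\|_{L^{2^*}}\le C\|v\|$, starting from the pointwise estimate
\[ \bigl|(A+v_1)^{2^*-1}-(A+v_2)^{2^*-1}-(2^*-1)A^{2^*-2}(v_1-v_2)\bigr|\le C\bigl(A^{2^*-3}(|v_1|+|v_2|)+|v_1|^{2^*-2}+|v_2|^{2^*-2}\bigr)|v_1-v_2| \]
valid when $N\le 6$, with $A:=\sum_j Q(a_j)^{-(N-2)/4}PU_{x_{\varepsilon,j},\lambda_{\varepsilon,j}}$, together with an obvious H\"older-type modification when $N\ge 7$, and an analogous estimate for the subcritical piece with exponent $s$. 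On the ball of radius $r_\varepsilon=o(1)$ these collapse to an $o(1)\|v_1-v_2\|$ bound, so the Banach fixed-point theorem delivers a unique $v_\varepsilon\in\overline{B}_{r_\varepsilon}(0)\cap\mathbb{E}_{x_\varepsilon,\lambda_\varepsilon}$ solving \eqref{aa2-16-1}, and the resulting norm bound $\|v_\varepsilon\|\le r_\varepsilon$ is exactly \eqref{a-4-18-12}.

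I expect the main obstacle to be the Lipschitz estimate on the critical nonlinearity in the high-dimensional regime $N\ge 7$, where $2^*-1<2$ makes a direct mean-value-type expansion impossible and one must instead use a H\"older-type pointwise bound of the form $|(A+v_1)^{2^*-1}-(A+v_2)^{2^*-1}|\le C|v_1-v_2|^{2^*-1}$ supplemented by a careful subtraction of the linear term in order to retain a factor $|v_1-v_2|$; producing a prefactor that is genuinely $o(1)$ rather than merely $O(1)$ then requires exploiting both the smallness of $r_\varepsilon$ and the uniform integrability of $A^{2^*-2}$ in $L^{N/2}(\Omega)$. The rest of the scheme is a routine adaptation of the modified reduction framework used in \cite{PWY2018}, and the upper bound \eqref{a-4-18-12} is built into the choice of $r_\varepsilon$.
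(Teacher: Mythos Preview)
Your proposal is correct and follows essentially the same route as the paper: invert $\mathbf{Q}_\varepsilon$ on $\mathbb{E}_{x_\varepsilon,\lambda_\varepsilon}$ via Proposition~\ref{Prop-Luo2} and Riesz representation, rewrite \eqref{aa2-16-1} as a fixed-point equation, and apply the contraction mapping theorem using the bounds from Lemma~\ref{lem1} and \eqref{3-13-03}. The only cosmetic difference is that the paper runs the contraction on a slightly looser ball $\mathcal{S}$ (with exponents weakened by a small $\tau>0$) and then reads off the sharp estimate \eqref{a-4-18-12} from the fixed-point equation itself, whereas you work directly on the ball of radius $r_\varepsilon$; both choices are standard and equivalent here.
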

\begin{proof}
First, from Proposition \ref{Prop-Luo2}, ${\bf Q}_\e$ is invertible on ${\mathbb E}_{x_\varepsilon,\lambda_\varepsilon}$ and $\|{\bf Q}_\e^{-1}\|\leq C$ for some $C>0$ independent of $x_\varepsilon,\lambda_\varepsilon,\varepsilon$.
   Furthermore,   by using  Riesz  representation theorem, this equation can be  rewritten   in  $    {\mathbb E}_{x_\varepsilon,\lambda_\varepsilon}$   as the operational form
\[       v_\e =     {\bf Q}_\e^{-1}  \Big[   {\bf f}_\varepsilon+{\bf R}_\varepsilon(v_\varepsilon)  \Big]           : = {\bf A }   (  v_\e)        \quad \text{for} ~   v_\e  \in     {\mathbb E}_{x_\varepsilon,\lambda_\varepsilon}.  \]
For small fixed $\tau>0$, we define
 \[     \mathcal{S} =  \Bigg\{    v_\e  \in     {\mathbb E}_{x_\varepsilon,\lambda_\varepsilon}\Big|    ~     \|v_{\varepsilon}\|\leq \varepsilon^{1-\tau} \eta_1(\widetilde{\lambda}_\varepsilon)+ \frac{1}{\widetilde{\lambda}^{2-\tau}_\varepsilon}+\sum^m_{j=1} |x_{\e,j}-a_j|^{2-\tau}        \Bigg\}.
 \]
Also we have the following estimate on  ${\bf R}_\varepsilon(v_\var)$:
\begin{equation} \label{4.23}
\int_{\Omega} {\bf R}_\varepsilon(v_\var)v_\var= O\Big(\|v_\var\|^{\min\{3,2^*\}}+\|v_\var\|^{s+1} \Big)
+O\left(  \e \int_\Omega\Big(\sum^m_{j=1}PU_{x_{\varepsilon,j},\lambda_{\varepsilon,j}}
 \Big)^{s}v_\e {\mathrm d}x \right).
\end{equation}
Combining   \eqref{4.23} and the results of Lemma \ref{lem1},  we can prove the operator ${\bf A } $  is  contraction mapping from  $\mathcal{S}$ to $\mathcal{S}$.
Then by using  fixed  point Theorem,  we can show that there exists $v_\varepsilon$ satisfying \eqref{2-16-1} and \eqref{a-4-18-12}.
\end{proof}
Let $u_\varepsilon=\displaystyle\sum^m_{j=1} Q(a_j) ^{ - \frac {N-2} {4 } }  PU_{x_{\varepsilon,j},
\lambda_{\varepsilon,j}}+  v_{\varepsilon}$, then \eqref{2-16-1} gives us
\begin{equation*}
-\Delta u_\varepsilon= Q(x) u_\varepsilon^{\frac{N+2}{N-2}}+\varepsilon u^{s}_{\varepsilon}+\sum^m_{j=1}\sum^N_{i=0}c_{\varepsilon,i,j}
\varphi_{ij}(x),
\end{equation*}
with $\varphi_{0j}(x)=\frac{\partial PU_{x_{\varepsilon,j},\lambda_{\varepsilon,j}}}{\partial \lambda}$ and $\varphi_{ij}(x)=\frac{\partial PU_{x_{\varepsilon,j},\lambda_{\varepsilon,j}}}{\partial  x^i}$ for $j=1,\cdots,m$ and $i=1,\cdots,N$.

  \begin{Prop}\label{Prop1.2} If $(x_\var,\lambda_\var)\in {\mathbb D}_{\varepsilon}$, then  \eqref{a22-1-8} is equivalent to

  \begin{equation}\label{2020-01-02-11}
 \frac{1}{\lambda^3_{\varepsilon,j}}=o\Big(
\frac{ |x_{\varepsilon,j}-a_j|^2 + \varepsilon \eta_1(\widetilde{\lambda}_\varepsilon)  }{\widetilde{\lambda}_\e }
   \Big) - \begin{cases}
 \Big(\frac{N(4-(N-2)(s-1))B}{2A(N-2)\Delta Q(a_j)(s+1)}+o(1)\Big)
\frac{\varepsilon}{\lambda_{\varepsilon,j}^{3-\frac{N-2}{2}(s-1)}},&~\mbox{for}~N+s\neq 5,\\
 \Big(\frac{2\omega_4}{A\Delta Q(a_j)}+o(1)\Big)\frac{\varepsilon \log \lambda_{\varepsilon,j}}{\lambda_{\varepsilon,j}^{3}},&~\mbox{for}~N+s=5.
\end{cases}
  \end{equation}
   and  \eqref{a2-13-1}  is equivalent to
  \begin{equation}\label{2020-01-02-12}
 \sum^m_{l=1}\frac{\partial^2Q(a_j)}{\partial x^i\partial x^l}\Big({x_{\e,j}^l} -{a_j^l} \Big)=
 O\Big(\frac{1}{\widetilde{\lambda}_\e^{N-3}} \Big)+o\Big( \sum^m_{q=1}|x_{\e,q}-a_q|\Big).
  \end{equation}
  \end{Prop}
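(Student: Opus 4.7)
The strategy is to substitute the decomposition $u_\varepsilon = W + v_\varepsilon$, with $W = \sum_{l=1}^{m} Q(a_l)^{-(N-2)/4}PU_{x_{\varepsilon,l},\lambda_{\varepsilon,l}}$, into the two orthogonality conditions \eqref{a22-1-8} and \eqref{a2-13-1}, and expand each side to leading order. Since $-\Delta PU_l = U_l^{2^*-1}$ in $\Omega$, one has $\Delta u_\varepsilon = -\sum_l Q(a_l)^{-(N-2)/4}U_l^{2^*-1} + \Delta v_\varepsilon$, and since $\varphi_{ij}\in H^1_0(\Omega)$ together with $v_\varepsilon\in\mathbb{E}_{x_\varepsilon,\lambda_\varepsilon}$ gives $\int_\Omega\Delta v_\varepsilon\,\varphi_{ij}\,{\mathrm d}x = -\langle v_\varepsilon,\varphi_{ij}\rangle = 0$, both conditions reduce to
\begin{equation*}
\int_\Omega\Big[Q(x)u_\varepsilon^{\frac{N+2}{N-2}} - \sum_{l=1}^{m} Q(a_l)^{-\frac{N-2}{4}}U_l^{\frac{N+2}{N-2}} + \varepsilon u_\varepsilon^s\Big]\varphi_{ij}\,{\mathrm d}x = 0.
\end{equation*}

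Next I Taylor-expand $Qu_\varepsilon^{2^*-1}$ around $W$ and split
\begin{equation*}
Qu_\varepsilon^{2^*-1} - \sum_l Q(a_l)^{-\frac{N-2}{4}}U_l^{2^*-1} = \mathrm{(I)} + \mathrm{(II)} + \mathrm{(III)} + \mathrm{(IV)},
\end{equation*}
where $\mathrm{(I)} = \sum_l Q(a_l)^{-(N+2)/4}(Q(x)-Q(a_l))PU_l^{2^*-1}$ is the main piece, $\mathrm{(II)} = \sum_l Q(a_l)^{-(N-2)/4}(PU_l^{2^*-1}-U_l^{2^*-1})$ is the projection-error piece, $\mathrm{(III)}$ collects the cross-bubble remainders from $QW^{2^*-1}-\sum_l Q(a_l)^{-(N+2)/4}QPU_l^{2^*-1}$, and $\mathrm{(IV)} = (2^*-1)QW^{2^*-2}v_\varepsilon + $ terms higher order in $v_\varepsilon$. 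For \eqref{a22-1-8} I test against $\varphi_{0j}=\partial_\lambda PU_j$: Lemma \eqref{3-13-21} produces from $\mathrm{(I)}$ the dominant $\lambda^{-3}$ term $-Q(a_j)^{-(N+2)/4}(N-2)A\Delta Q(a_j)/(N\lambda_{\varepsilon,j}^3)(1+o(1))$; $\mathrm{(II)}$ is $O(\lambda_{\varepsilon,j}^{-(N-1)})$ via the standard Robin asymptotics $PU-U = O(\lambda^{-(N-2)/2})$; $\mathrm{(III)}$ is $O(\widetilde\lambda_\varepsilon^{-(N-1)})$ by \eqref{4-26-2}. For the $\varepsilon u_\varepsilon^s$ term I use $u_\varepsilon^s\approx W^s\approx\sum_l Q(a_l)^{-(N-2)s/4}PU_l^s$ plus cross remainders (bounded using \eqref{a4-26-2}) and apply Lemma \eqref{3-13-51} to obtain the coefficient involving $B$ (or $\omega_4\log\lambda/\lambda^3$ in the borderline case $N+s=5$). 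Critically, the linear-in-$v_\varepsilon$ integral $(2^*-1)\int_\Omega QW^{2^*-2}v_\varepsilon\,\partial_\lambda PU_j$ reduces, thanks to the arithmetic identity $(N-2)(2^*-2)/4=1$ that forces $Q(a_j)\cdot Q(a_j)^{-1}=1$ near $x_{\varepsilon,j}$, to $(2^*-1)\int U_j^{2^*-2}\partial_\lambda U_j\cdot v_\varepsilon$ at leading order, and this vanishes exactly by the orthogonality $v_\varepsilon\in\mathbb{E}$. Inserting $\|v_\varepsilon\| = O(\varepsilon\eta_1(\widetilde\lambda_\varepsilon)+\widetilde\lambda_\varepsilon^{-2}+\sum_j|x_{\varepsilon,j}-a_j|^2)$ from \eqref{a-4-18-12} to absorb the subleading $v_\varepsilon$ remainders and dividing by the coefficient of $\lambda_{\varepsilon,j}^{-3}$ then yields \eqref{2020-01-02-11}.

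The derivation of \eqref{2020-01-02-12} from \eqref{a2-13-1} is entirely parallel: test the same reduced identity with $\varphi_{ij}=\partial_{x^i}PU_j$, invoke Lemma \eqref{3-13-31} to extract from $\mathrm{(I)}$ the leading term $2Q(a_j)^{-(N+2)/4}\big(\int_{\R^N}|x|^2/(1+|x|^2)^{N+1}\,{\mathrm d}x\big)\sum_l\partial^2 Q(a_j)/(\partial x^i\partial x^l)(x_{\varepsilon,j}^l-a_j^l)$ modulo $O(\log\widetilde\lambda_\varepsilon/\widetilde\lambda_\varepsilon^{N-1})$, bound the $\varepsilon u_\varepsilon^s$ contribution by $O(\widetilde\lambda_\varepsilon^{-(N-2)})$ via Lemma \eqref{3-13-41} and \eqref{ab4-26-2}, control $\mathrm{(II)},\mathrm{(III)}$ using \eqref{aa4-26-2} and the projection estimates, and use orthogonality once more to annihilate the leading linear $v_\varepsilon$ piece. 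Collecting everything and dividing by the positive constant $2Q(a_j)^{-(N+2)/4}\int_{\R^N}|x|^2/(1+|x|^2)^{N+1}\,{\mathrm d}x$ produces \eqref{2020-01-02-12}.

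The main obstacle is the handling of the linear-in-$v_\varepsilon$ contribution $(2^*-1)\int QW^{2^*-2}v_\varepsilon\varphi_{ij}$: a naive pointwise bound $O(\|v_\varepsilon\|/\lambda_j)$ is too crude, so one must exploit the orthogonality $v_\varepsilon\in\mathbb{E}$ to cancel the leading piece after matching $Q(x)W(x)^{2^*-2}$ with $U_j^{2^*-2}$ near $x_{\varepsilon,j}$. The residual then has to be decomposed carefully into a part vanishing by orthogonality and a part of order $o(\widetilde\lambda_\varepsilon^{-3})$, for which one combines the sharp estimate \eqref{a-4-18-12} with the Robin asymptotics $PU_l-U_l = O(\lambda^{-(N-2)/2})$. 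The other delicate ingredient is the careful book-keeping of the $Q(a_j)$-powers across pieces $\mathrm{(I)}$-$\mathrm{(IV)}$ and across the borderline $N+s=5$ vs. $N+s\neq 5$ distinction, so that the final coefficients in \eqref{2020-01-02-11} and \eqref{2020-01-02-12} come out exactly as stated.
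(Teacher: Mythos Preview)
Your proposal is correct and follows essentially the same route as the paper's proof: both decompose $\Delta u_\varepsilon + Q(x)u_\varepsilon^{2^*-1} + \varepsilon u_\varepsilon^s$ tested against $\partial_\lambda PU_j$ (resp.\ $\partial_{x^i}PU_j$), invoke Lemmas \eqref{3-13-21}, \eqref{3-13-51}, \eqref{4-26-2}, \eqref{a4-26-2} (resp.\ \eqref{3-13-31}, \eqref{3-13-41}, \eqref{aa4-26-2}, \eqref{ab4-26-2}) for the main and cross-bubble pieces, use the orthogonality $v_\varepsilon\in\mathbb{E}_{x_\varepsilon,\lambda_\varepsilon}$ to cancel the leading linear-in-$v_\varepsilon$ contribution (the paper does this in \eqref{4-26-1}, exactly as you describe), and absorb the residual $v_\varepsilon$ terms via \eqref{a-4-18-12}. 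Your explicit inclusion of the projection-error piece $\mathrm{(II)}=\sum_l Q(a_l)^{-(N-2)/4}(PU_l^{2^*-1}-U_l^{2^*-1})$ is slightly more careful than the paper, which silently merges this into the other remainders, but the logic is identical.
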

 \begin{proof}
 \textbf{Step 1.}   Now we compute  \eqref{a22-1-8} term by term.
 \begin{equation}\label{4-26-5}
 \begin{split}
 \int_{\Omega}&\Big(\Delta u_\varepsilon+
 Q(x) u_\varepsilon^{\frac{N+2}{N-2}}  \Big)\frac{\partial PU_{x_{\varepsilon,j},\lambda_{\varepsilon,j}}}{\partial
  \lambda }\\ =&
   \sum^m_{l=1}  Q(a_l) ^{ - \frac {N+2} {4 } }  \int_{\Omega}
\Big(Q(x)-  Q(a_l) \Big) PU_{x_{\varepsilon,l},\lambda_{\varepsilon,l}}^{\frac{N+2}{N-2}} \frac{\partial PU_{x_{\varepsilon,j},\lambda_{\varepsilon,j}}}{\partial
  \lambda }+ \int_{\Omega}
   \Delta v_\e\frac{\partial PU_{x_{\varepsilon,j},\lambda_{\varepsilon,j}}}{\partial
  \lambda }\\ &+
   \int_{\Omega}
Q(x) \Big( u_\varepsilon^{\frac{N+2}{N-2}} -  \sum^m_{l=1}  Q(a_l) ^{ - \frac {N+2} {4 } }  PU_{x_{\varepsilon,l},\lambda_{\varepsilon,l}}^{\frac{N+2}{N-2}}  \Big) \frac{\partial PU_{x_{\varepsilon,j},\lambda_{\varepsilon,j}}}{\partial
  \lambda }.
  \end{split}\end{equation}
Also by orthogonality, we know
\begin{equation}\label{4-26-6}
\begin{split}
\int_{\Omega}
   \Delta v_\e\frac{\partial PU_{x_{\varepsilon,j},\lambda_{\varepsilon,j}}}{\partial
  \lambda }
=-\int_{\Omega}
  \nabla v_\e\frac{\partial \nabla PU_{x_{\varepsilon,j},\lambda_{\varepsilon,j}}}{\partial
  \lambda }=0.
\end{split}
\end{equation}
And from \eqref{3-14-11}, we have
 \begin{equation} \label{4-24-22}
 \begin{split}
  u_\varepsilon&^{\frac{N+2}{N-2}}= Q(a_l) ^{ - \frac {N+2} {4 } }  PU_{x_{\varepsilon,j},\lambda_{\varepsilon,j}}^{\frac{N+2}{N-2}}
  +   \frac{N+2}{N-2} Q(a_j) ^{-1}  PU_{x_{\varepsilon,j},\lambda_{\varepsilon,j}}^{\frac{4}{N-2}} \big(\sum_{l\neq j} Q(a_l) ^{ - \frac {N-2} {4 } }  PU_{x_{\varepsilon,l},\lambda_{\varepsilon,l}}
  +v_\e \big) \\ &
  + \begin{cases}
 O\Big(\big(\displaystyle\sum_{l\neq j} PU_{x_{\varepsilon,l},\lambda_{\varepsilon,l}}+v_\e \big)^{\frac{N+2}{N-2}} \Big),~&\mbox{for}~N\geq 6, \\[6mm]
 O\Big(PU_{x_{\varepsilon,j},\lambda_{\varepsilon,j}}^{\frac{6-N}{N-2}} \big(\displaystyle\sum_{l\neq j}PU_{x_{\varepsilon,l},\lambda_{\varepsilon,l}}+v_\e \big)^{2} +\big(\displaystyle\sum_{l\neq j} PU_{x_{\varepsilon,l},\lambda_{\varepsilon,l}}+v_\e \big)^{\frac{N+2}{N-2}} \Big),~&\mbox{for}~N<6.
  \end{cases}
  \end{split}\end{equation}
Also by orthogonality, we get
 \begin{equation}\label{4-26-1}
\begin{split}
   \int_{\Omega}&
Q(x)  v_\e
 \frac{\partial PU^{\frac{N+2}{N-2}}_{x_{\varepsilon,j},\lambda_{\varepsilon,j}}}{\partial
  \lambda }\\=&\frac {N+2}{N-2}
 \int_{\Omega}\big(Q(x)-Q(a_j)\big)
  v_\e  PU^{\frac{ 4}{N-2}}_{x_{\varepsilon,j},\lambda_{\varepsilon,j} }
 \frac{\partial PU_{x_{\varepsilon,j},\lambda_{\varepsilon,j}}}{\partial
  \lambda} \\ =& O\left( \Big(\int_{\Omega}  \big(Q(x)-Q(a_j)\big)^{\frac{2N}{N+2} }PU^{\frac{4}{N-2}\frac{2N}{N+2}}_{x_{\varepsilon,j},\lambda_{\varepsilon,j}}
 \big( \frac{\partial PU_{x_{\varepsilon,j},\lambda_{\varepsilon,j}}}{\partial
  \lambda}\big)^\frac{2N}{N+2}\Big)^{\frac {N+2}{2N}}\| v_\e\|\right)\\=
  &O\Big(\frac 1 {\lambda_{\varepsilon,j}}\Big(\int_{\Omega}|x-x_{\varepsilon,j}|^{\frac{4N}{N+2}}
   PU^{\frac{ 2N}{N-2}} _{x_{\varepsilon,j},\lambda_{\varepsilon,j}}\Big)^{\frac{N+2}{2N}}\|v_\e\|\Big)   \\=&
 o\Big(\frac{1}{\widetilde{\lambda}_\e^{3}}
   \Big)+o\Big(
\frac{ |x_{\varepsilon,j}-a_j|^2  + \varepsilon \eta_1(\widetilde{\lambda}_\varepsilon)   }{\widetilde{\lambda}_\e }
   \Big).
\end{split}
\end{equation}
So using \eqref{4-26-2}, \eqref{4-24-22} and \eqref{4-26-1}, we get
  \begin{equation}\label{4-26-7}
 \begin{split}
   \int_{\Omega}
Q(x) \left( u_\varepsilon^{\frac{N+2}{N-2}} -  \sum^m_{l=1} Q(a_l) ^{ - \frac {N+2} {4 } }  PU_{x_{\varepsilon,l},\lambda_{\varepsilon,l}}^{\frac{N+2}{N-2}}  \right) \frac{\partial PU_{x_{\varepsilon,j},\lambda_{\varepsilon,j}}}{\partial
  \lambda }
   =o\Big(\frac{1}{\widetilde{\lambda}_\e^{3}}
   \Big)+o\Big(
\frac{ |x_{\varepsilon,j}-a_j|^2 +  \e  \eta_1(\widetilde{\lambda}_\varepsilon) }{\widetilde{\lambda}_\e }
   \Big).
  \end{split}\end{equation}
  Hence from \eqref{3-13-21}, \eqref{4-26-5}, \eqref{4-26-6} and  \eqref{4-26-7}, we find
   \begin{equation}\label{3-14-26}
 \begin{split}
 \int_{\Omega}&\Big(\Delta u_\varepsilon+
 Q(x) u_\varepsilon^{\frac{N+2}{N-2}}  \Big)\frac{\partial PU_{x_{\varepsilon,j},\lambda_{\varepsilon,j}}}{\partial
  \lambda } \\ =& - \frac{\Delta Q(a_j)(N-2)A}{N\lambda^3_{\varepsilon,j}}  \big( 1+ o(1) \big)+o\Big(\frac{1}{\widetilde{\lambda}_\e^{3}}
   \Big)+o\Big(
\frac{ |x_{\varepsilon,j}-a_j|^2   +  \e \eta_1(\widetilde{\lambda}_\varepsilon) }{\widetilde{\lambda}_\e }
   \Big).
  \end{split}\end{equation}
On the other hand,
 \begin{equation}\label{4-26-11}
 \begin{split}
 \int_{\Omega}&u^{s}_{\varepsilon}\frac{\partial PU_{x_{\varepsilon,j},\lambda_{\varepsilon,j}}}{\partial \lambda}=
\int_{\Omega}
 PU_{x_{\varepsilon,j},\lambda_{\varepsilon,j}}^{s} \frac{\partial PU_{x_{\varepsilon,j},\lambda_{\varepsilon,j}}}{\partial  \lambda}+
   \int_{\Omega}
 \Big( u_\varepsilon^{s} - PU_{x_{\varepsilon,j},\lambda_{\varepsilon,j}}^{s}  \Big) \frac{\partial PU_{x_{\varepsilon,j},\lambda_{\varepsilon,j}}}{\partial  \lambda}.
  \end{split}\end{equation}
  And we know
 \begin{equation}\label{4-26-12}
 \begin{split}
  u_\varepsilon^{s}-PU_{x_{\varepsilon,j},\lambda_{\varepsilon,j}}^{s}
=O\left(PU_{x_{\varepsilon,j},\lambda_{\varepsilon,j}}^{s-1} \Big(\sum_{l\neq j} PU_{x_{\varepsilon,l},\lambda_{\varepsilon,l}}
  +v_\e \Big)+
 \Big(\displaystyle\sum_{l\neq j} PU_{x_{\varepsilon,l},\lambda_{\varepsilon,l}}+v_\e \Big)^{s}\right).
  \end{split}\end{equation}
  Also
 we find
  \begin{equation}\label{4-26-13}
 \begin{split}
 \int_{\Omega}\Big|v_{\varepsilon}\frac{\partial PU^s_{x_{\varepsilon,j},\lambda_{\varepsilon,j}}}{\partial \lambda}\Big|
 + \int_{\Omega}\Big|v^s_{\varepsilon}\frac{\partial PU_{x_{\varepsilon,j},\lambda_{\varepsilon,j}}}{\partial \lambda}\Big|= O\Big(\frac{\eta_1({\lambda}_{\varepsilon,j})}{\lambda_{\e,j}} \Big) \|v_\e\|.
  \end{split}\end{equation}
 Then from \eqref{3-13-51}, \eqref{a4-26-2}, \eqref{4-26-11}, \eqref{4-26-12} and \eqref{4-26-13}, we get
\begin{equation}\label{3-14-27}
\begin{split}
     \int_{\Omega}
      u^{s}_{\varepsilon}\frac{\partial PU_{x_{\varepsilon,j},\lambda_{\varepsilon,j}}}{\partial
      \lambda}{\mathrm d}x
             = \begin{cases}
- \frac{4-(N-2)(s-1)}{2(s+1)}\Big(B+o(1)\Big)
\frac{1}{\lambda_{\varepsilon,j}^{3-\frac{N-2}{2}(s-1)}},&~\mbox{for}~N+s\neq 5,\\
-\omega_4\frac{\log \lambda_{\varepsilon,j}}{\lambda_{\varepsilon,j}^{3}}
+O\Big(\frac{1}{\lambda_{\varepsilon,j}^{3}}\Big),&~\mbox{for}~N+s=5.
\end{cases}  \end{split}
\end{equation}
Hence \eqref{2020-01-02-11} follows by \eqref{3-14-26} and \eqref{3-14-27}.

\vskip 0.2cm

\noindent \textbf{Step 2.}
 Now we compute  \eqref{a2-13-1} term by term.
 \begin{equation}\label{4-26-21}
 \begin{split}
 \int_{\Omega}&\Big(\Delta u_\varepsilon+
 Q(x) u_\varepsilon^{\frac{N+2}{N-2}}   \Big) \frac{\partial PU_{x_{\varepsilon,j},\lambda_{\varepsilon,j}}}{\partial {x^i}}\\ =&
   \sum^m_{l=1}  Q(a_l) ^{ - \frac {N+2} {4 } }  \int_{\Omega}
\Big(Q(x)-Q(a_l)  \Big)  PU_{x_{\varepsilon,l},\lambda_{\varepsilon,l}}^{\frac{N+2}{N-2}} \frac{\partial PU_{x_{\varepsilon,j},\lambda_{\varepsilon,j}}}{\partial {x^i}}+ \int_{\Omega}
   \Delta v_\e\frac{\partial PU_{x_{\varepsilon,j},\lambda_{\varepsilon,j}}}{\partial {x^i}}\\ &+
   \int_{\Omega}
Q(x) \Big( u_\varepsilon^{\frac{N+2}{N-2}} -  \sum^m_{l=1} Q(a_l) ^{ - \frac {N+2} {4 } }   PU_{x_{\varepsilon,l},\lambda_{\varepsilon,l}}^{\frac{N+2}{N-2}}   \Big)  \frac{\partial PU_{x_{\varepsilon,j},\lambda_{\varepsilon,j}}}{\partial {x^i}}.
  \end{split}\end{equation}
Also by orthogonality, it holds
\begin{equation}\label{4-26-22}
\begin{split}
\int_{\Omega}
   \Delta v_\e\frac{\partial PU_{x_{\varepsilon,j},\lambda_{\varepsilon,j}}}{\partial {x^i}}
=-\int_{\Omega}
  \nabla v_\e\frac{\partial \nabla PU_{x_{\varepsilon,j},\lambda_{\varepsilon,j}}}{\partial {x^i}}=0.
\end{split}
\end{equation}
From \eqref{aa4-26-2}, \eqref{a-4-18-12} and \eqref{4-24-22}, we know that
\begin{equation}\label{4-26-23}
\begin{split}
 \int_{\Omega}
 & Q(x) \Big( u_\varepsilon^{\frac{N+2}{N-2}} -  \sum^m_{l=1} Q(a_l) ^{ - \frac {N+2} {4 } }   PU_{x_{\varepsilon,l},\lambda_{\varepsilon,l}}^{\frac{N+2}{N-2}}   \Big)  \frac{\partial PU_{x_{\varepsilon,j},\lambda_{\varepsilon,j}}}{\partial {x^i}}   \\   &  =
\frac{N+2}{N-2}  Q(a_j) ^{-1}   \int_{\Omega}
Q(x)       PU_{x_{\varepsilon,j},\lambda_{\varepsilon,j}}^{\frac{4}{N-2}} \Big(\sum_{l\neq j} Q(a_l) ^{ - \frac {N-2} {4 } }  PU_{x_{\varepsilon,l},\lambda_{\varepsilon,l}}
  +v_\e \Big)  \frac{\partial PU_{x_{\varepsilon,j},\lambda_{\varepsilon,j}}}{\partial {x^i}}     \\  &
  =O\Big( \frac{1}{\widetilde{\lambda}_\e^{N-3}} \Big)+
 o\Big( \sum^m_{q=1}|x_{\e,q}-a_q|\Big). \end{split}
\end{equation}
  Hence by \eqref{3-13-31}, \eqref{4-26-21}, \eqref{4-26-22} and \eqref{4-26-23}, we find
   \begin{equation}\label{3-14-28}
 \begin{split}
 \int_{\Omega}&\Big(\Delta u_\varepsilon+
 Q(x) u_\varepsilon^{\frac{N+2}{N-2}}  \Big) \frac{\partial PU_{x_{\varepsilon,j},\lambda_{\varepsilon,j}}}{\partial {x^i}}\\ =&
\Big(2\int_{\R^N} \frac{|x|^2}{(1+|x|^2)^{N+1}}{\mathrm d}x +o(1)\Big)
    \sum^m_{l=1}\frac{\partial^2Q(a_j)}{\partial x^i\partial x^l}\Big({x_{\e,j}^l} -{a_j^l} \Big)  +
 O\Big( \frac{1}{\widetilde{\lambda}_\e^{N-3}} \Big).
  \end{split}\end{equation}
On the other hand,
 \begin{equation}\label{4-26-31}
 \begin{split}
 \int_{\Omega}&u^{s}_{\varepsilon}\frac{\partial PU_{x_{\varepsilon,j},\lambda_{\varepsilon,j}}}{\partial {x^i}}=\int_{\Omega}
 PU_{x_{\varepsilon,j},\lambda_{\varepsilon,j}}^{s} \frac{\partial PU_{x_{\varepsilon,j},\lambda_{\varepsilon,j}}}{\partial {x^i}}+
   \int_{\Omega}
 \Big( u_\varepsilon^{s} -  PU_{x_{\varepsilon,j},\lambda_{\varepsilon,j}}^{s}  \Big) \frac{\partial PU_{x_{\varepsilon,j},\lambda_{\varepsilon,j}}}{\partial {x^i}}.
  \end{split}\end{equation}
Also
 we find
  \begin{equation}\label{4-26-32}
 \begin{split}
 \int_{\Omega}\Big|v_{\varepsilon}\frac{\partial PU^s_{x_{\varepsilon,j},\lambda_{\varepsilon,j}}}{\partial {x^i}}\Big|
 + \int_{\Omega}\Big|v^s_{\varepsilon}\frac{\partial PU_{x_{\varepsilon,j},\lambda_{\varepsilon,j}}}{\partial {x^i}}\Big|= O\Big(\lambda_{\e,j}\eta_1({\lambda}_{\varepsilon,j})  \Big) \|v_\e\|.
  \end{split}\end{equation}
 Then from \eqref{3-13-41}, \eqref{ab4-26-2}, \eqref{4-26-31} and \eqref{4-26-32}, we get
\begin{equation}\label{3-14-29}
\begin{split}
     \int_{\Omega}
      u^{s}_{\varepsilon}\frac{\partial PU_{x_{\varepsilon,j},\lambda_{\varepsilon,j}}}{\partial
      \lambda}{\mathrm d}x
       =O\Big( \frac{1}{\widetilde{\lambda}_\e^{N-3}} \Big).
\end{split}
\end{equation}
Hence \eqref{2020-01-02-12} can follows by \eqref{3-14-28} and \eqref{3-14-29}.

  \end{proof}
\begin{proof}[\textbf{Proof of Theorem \ref{th1.2}}]
 For $j=1,\cdots,m$, let
 \begin{equation}\label{3-11-1}
 \begin{split}
 \lambda_{\var,j}\in \begin{cases}
 \Big[c_1\varepsilon^{-\big(\frac{N-2}{2}(s-1)\big)^{-1}},c_2\varepsilon^{-\big(\frac{N-2}{2}(s-1)\big)^{-1}}\Big],
 ~&~\mbox{if}~N\geq 4 ~\mbox{and}~s>1,\\[3mm]
 \Big[e^{\frac{c_1}{\varepsilon}},e^{\frac{c_2}{\varepsilon}}\Big],
 ~&~\mbox{if}~N=4 ~\mbox{and}~s=1,
 \end{cases}
 \end{split}\end{equation}
 and
  \begin{equation}\label{a3-11-1}
 {x_{\varepsilon,j}^i} \in \begin{cases}
 \Big[a_{j,i}-\varepsilon^{\frac{1}{2(s-1)}}, a_{j,i}+\varepsilon^{\frac{1}{2(s-1)}}\Big],&~\mbox{if}~N\geq 4~\mbox{and}~s>1,\\[3mm]
  \Big[a_{j,i}-e^{-\frac{c_1}{2\varepsilon}}, a_{j,i}+e^{-\frac{c_1}{2\varepsilon}}\Big],
 ~&~\mbox{if}~N=4 ~\mbox{and}~s=1,
 \end{cases} \end{equation}
 where $c_1,c_2$ are fixed positive constants with $c_1$  small and  $c_2$ large.
Hence
let $$M:=\Big\{(x_{\varepsilon},\lambda_\varepsilon), (x_{\varepsilon},\lambda_\varepsilon) ~\mbox{satisfying}~\eqref{3-11-1}~\mbox{and}~\eqref{a3-11-1}\Big\},$$
and some vectors $y=(y_1,\cdots,y_m)$, $\lambda=(\lambda_1,\cdots,
\lambda_m)$. Also for $i=1,\cdots,N,~j=1,\cdots,m$, we define
  \begin{equation*}
 F_{m(i-1)+j}(y,\lambda) =\sum^m_{l=1}\frac{\partial^2Q(a_j)}{\partial x^i\partial x^l}\Big({y_{j}^l} -{a_j^l} \Big)+
 O\Big(\frac{1}{|\lambda|^{N-3}} \Big)+o\Big( \sum^m_{q=1}|y_{q}-a_q|\Big),
  \end{equation*}
  and
   \begin{equation*}
   \begin{split}
 F_{mN+j}(y,\lambda)=&\frac{1}{\lambda^3_{j}}+o\Big(
\frac{ |y_{j}-a_j|^2 + \varepsilon \eta_1(|\lambda|)}{|\lambda|}
   \Big) \\&+\begin{cases}
 \Big(\frac{N(4-(N-2)(s-1))B}{2A(N-2)\Delta Q(a_j)(s+1)}+o(1)\Big)
\frac{\varepsilon}{\lambda_{j}^{3-\frac{N-2}{2}(s-1)}},&~\mbox{for}~N+s\neq 5,\\
 \Big(\frac{2\omega_4}{A\Delta Q(a_j)}+o(1)\Big)\frac{\varepsilon \log \lambda_{j}}{\lambda_{j}^{3}},&~\mbox{for}~N+s=5.
\end{cases}\end{split}
  \end{equation*}
  By orthogonal transform, we can view the matrix
  $\Big(\frac{\partial^2Q(a_j)}{\partial x^i\partial x^l}\Big)_{1\leq i,l\leq N}$ as a diagonal matrix
  $diag(\mu_1,\cdots,\mu_N)$.
  Then for $(y,\lambda)\in M$, $N\geq 4$ and $s>1$,  it holds
  \begin{equation*}
\Big(F_{m(i-1)+j}(y,\lambda)|_{y^i_j=a_{j,i}-\varepsilon^{\frac{1}{2(s-1)}}} \Big)\cdot \Big(F_{m(i-1)+j}(y,\lambda)|_{y^i_j=a_{j,i}+\varepsilon^{\frac{1}{2(s-1)}}}\Big)=
-\Big(\mu_i^2+o(1)\Big)\varepsilon^{\frac{1}{s-1}}<0,
  \end{equation*}
  \begin{equation*}
 F_{mN+j}(y,\lambda)|_{\lambda_j=c_1\varepsilon^{-\big(\frac{N-2}{2}(s-1)\big)^{-1}}}
 =
  \frac{1}{\lambda_{j}^{3}}\Big(1+\frac{N(4-(N-2)(s-1))B}{2A(N-2)\Delta Q(a_j)(s+1)}\big(\frac{1}{c_1}\big)^{\frac{N-2}{2}(s-1)}+o(1)\Big)
< 0,
  \end{equation*}
  and
  \begin{equation*}
 F_{mN+j}(y,\lambda)|_{\lambda_j=c_2\varepsilon^{-\big(\frac{N-2}{2}(s-1)\big)^{-1}}}
 = \frac{1}{\lambda_{j}^{3}}\Big(1+\frac{N(4-(N-2)(s-1))B}{2A(N-2)\Delta Q(a_j)(s+1)}\big(\frac{1}{c_2}\big)^{\frac{N-2}{2}(s-1)}+o(1)\Big)>
 0.
  \end{equation*}
 Then by well known Poincar\'e-Miranda theorem (see Lemma \ref{lem-B-1} in the Appendix),  \eqref{2020-01-02-11} and \eqref{2020-01-02-12}  have  a solution
  $(x_{\varepsilon},\lambda_\varepsilon)\in M$ when $N\geq 4$ and $s>1$. Similarly, there exists
  $(x_{\varepsilon},\lambda_\varepsilon)\in M$  which solves \eqref{2020-01-02-11} and \eqref{2020-01-02-12} when $N=4$ and $s=1$.

   On the other hand, by finite dimensional reduction (Proposition \ref{Prop-Luo1}), we know that there exists $u_\varepsilon$ satisfying
    \begin{equation*}
    -\Delta u_\varepsilon= Q(x) u_\varepsilon^{\frac{N+2}{N-2}}+\varepsilon
    u^{s}_{\varepsilon}+\sum^m_{j=1}\sum^N_{i=0}c_{\varepsilon,i,j}
    \varphi_{ij}(x).
    \end{equation*}
Also from
\textbf{Claim 1} in page 3 and Proposition \ref{Prop1.2}, we find
$$c_{\varepsilon,0,j}=c_{\varepsilon,i,j}=0~\mbox{for}~i=1,\cdots,N~ \mbox{and}~j=1,\cdots,m.$$
Hence
 $u_\varepsilon$ is a concentrated solution of \eqref{1.1}, which satisfies \eqref{4-6-1}. \end{proof}

\section{Local uniqueness (Proof of Theorem \ref{th1.3})
}
\setcounter{equation}{0}

Let $u^{(1)}_{\varepsilon}(x)$, $u^{(2)}_{\varepsilon}(x)$ be two different solutions of $\eqref{1.1}$ satisfying \eqref{4-6-1}. Under Condition (Q), $N\geq 4$ and $ s\in[1,\frac{N+2}{N-2})$, we find from Theorem \ref{prop1} that
$u^{(l)}_{\varepsilon}(x)$ can be written as
\begin{equation*}
u^{(l)}_{\varepsilon}=\sum^m_{j=1}\big(Q(a_j)\big)^{-\frac{N-2}{4}} PU_{x^{(l)}_{\varepsilon,j}, \lambda^{(l)}_{\varepsilon,j}}+w^{(l)}_{\varepsilon},
\end{equation*}
satisfying, for $j=1,\cdots,m$, $l=1,2$,
 $\lambda^{(l)}_{\varepsilon,j}=
\big(u_\varepsilon(x^{(l)}_{\varepsilon,j})\big)^{\frac{2}{N-2}}$,
\begin{equation*}
 x^{(l)}_{\varepsilon,j}\rightarrow a_j, ~ \lambda^{(l)}_{\varepsilon,j} \rightarrow +\infty,~ \|w^{(l)}_{\varepsilon}\|=o(1)~\mbox{and}~w^{(l)}_\varepsilon\in \bigcap^m_{j=1}E_{x^{(l)}_{\varepsilon,j},\lambda^{(l)}_{\varepsilon,j}}.
\end{equation*}
Moreover  we define  $\bar{\lambda}_\varepsilon:=
\min
\Big\{\lambda^{(1)}_{1,\varepsilon},\cdots,
\lambda^{(1)}_{m,\varepsilon},\lambda^{(2)}_{1,\varepsilon},\cdots,
\lambda^{(2)}_{m,\varepsilon}\Big\}$, and from \eqref{lp111}, \eqref{2020-01-02-11} and \eqref{2020-01-02-12}, we find
\begin{equation}\label{4-26-61}
 \big|x^{(l)}_{\varepsilon,j}-a_j\big|=o\big(\frac{1}{\bar{\lambda}_\varepsilon}\big), ~ \lambda^{(l)}_{\varepsilon,j}
 =
 \e^{-\frac{2}{(N-2)(s-1)}}\Big(C_j+o(1)   \Big), ~~\mbox{for}~N\geq 5~\mbox{and}~s>1,
\end{equation}
with some positive constants $C_j$.
 Now we set
\begin{equation}\label{3.1}
\xi_{\varepsilon}(x)=\frac{u_{\varepsilon}^{(1)}(x)-u_{\varepsilon}^{(2)}(x)}
{\|u_{\varepsilon}^{(1)}-u_{\varepsilon}^{(2)}\|_{L^{\infty}(\Omega)}},
\end{equation}
then $\xi_{\varepsilon}(x)$ satisfies $\|\xi_{\varepsilon}\|_{L^{\infty}(\Omega)}=1$ and
\begin{equation}\label{3.2}
-\Delta \xi_{\varepsilon}(x)=C_{\varepsilon}(x)\xi_{\varepsilon}(x),
\end{equation}
where
\begin{equation*}
C_{\varepsilon}(x)=\Big(2^*-1   \Big)Q(x)\int_{0}^1
\Big(tu_{\varepsilon}^{(1)}(x)+(1-t)u_{\varepsilon}^{(2)}(x)   \Big)
^{\frac{4}{N-2}}{\mathrm d}  t+\e s \int_{0}^1
\Big(tu_{\varepsilon}^{(1)}(x)+(1-t)u_{\varepsilon}^{(2)}(x)   \Big)
^{s-1}{\mathrm d}  t.
\end{equation*}

Next, we give  some estimates on $\xi_\e$.
\begin{Prop}
For $N\geq 5$ and $\xi_{\varepsilon}(x)$ defined by \eqref{3.1}, we have
\begin{equation}\label{3-3}
\xi_{\varepsilon}(x) =O\Big(\frac{\log \bar{\lambda}_\varepsilon}{\bar{\lambda}^{N-2}_\varepsilon}  \Big),~\mbox{in}~ C^1\Big(\Omega\backslash\bigcup_{j=1}^mB_{d}(x_{\varepsilon,j}^{(1)})\Big),
\end{equation}
where $d>0$ is any small fixed constant.
\end{Prop}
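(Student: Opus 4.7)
The plan is to represent $\xi_\varepsilon$ via the Dirichlet Green's function $G$ of $-\Delta$ on $\Omega$ and exploit the bubble structure inside each ball $B_d(x^{(1)}_{\varepsilon,j})$. Since $-\Delta\xi_\varepsilon = C_\varepsilon\xi_\varepsilon$ in $\Omega$ with zero boundary data, we have
\[
\xi_\varepsilon(x) = \int_\Omega G(x,y)\,C_\varepsilon(y)\,\xi_\varepsilon(y)\,dy, \qquad x\in\Omega.
\]
I would split this integral into the near-peak pieces $\int_{B_d(x^{(1)}_{\varepsilon,j})}$ and the far-field piece $\int_{\Omega\setminus\bigcup_j B_d}$, noting that for $x\in\Omega\setminus\bigcup_j B_d$ the Green's function is uniformly bounded when $y$ lies in any of the near-peak balls, so the naive $|x-y|^{-(N-2)}$ singularity plays no role there.

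The heart of the argument is the near-peak estimate. From Theorem~\ref{prop1} one has $u^{(l)}_\varepsilon\le C\sum_j PU_{x^{(l)}_j,\lambda^{(l)}_j}$, hence $C_\varepsilon(y) \lesssim (\lambda^{(1)}_{\varepsilon,j})^2(1+(\lambda^{(1)}_{\varepsilon,j})^2|y-x^{(1)}_{\varepsilon,j}|^2)^{-2}$ near each peak, plus the lower-order $\varepsilon u^{s-1}_\varepsilon$ remainder. Rescaling $z=\lambda^{(1)}_{\varepsilon,j}(y-x^{(1)}_{\varepsilon,j})$, the function $\tilde\xi_{\varepsilon,j}(z):=\xi_\varepsilon(x^{(1)}_{\varepsilon,j}+z/\lambda^{(1)}_{\varepsilon,j})$ is uniformly bounded by $1$ and converges in $C^2_{\mathrm{loc}}(\R^N)$ to a bounded element of the kernel of the linearized bubble operator $-\Delta-(2^*-1)U_{0,1}^{4/(N-2)}$, that is, a combination of $\partial_\lambda U$ and $\partial_{x^i}U$, each of which decays at infinity at rate $|z|^{-(N-2)}$ or faster. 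A standard comparison argument using barriers built from the fundamental solution upgrades this qualitative convergence to the quantitative pointwise bound
\[
|\xi_\varepsilon(y)|\le C\bigl(1+\lambda^{(1)}_{\varepsilon,j}|y-x^{(1)}_{\varepsilon,j}|\bigr)^{-(N-2)} \qquad \text{on } B_d(x^{(1)}_{\varepsilon,j}).
\]
Inserting this decay into the near-peak integral, changing variables to $z$, and invoking \eqref{4-26-61} to express $\varepsilon$ as a power of $\bar\lambda_\varepsilon$, I obtain a contribution of order $\log\bar\lambda_\varepsilon/\bar\lambda_\varepsilon^{N-2}$, the logarithm originating from the borderline integrability of the rescaled integrand against the $(1+|z|^2)^{-2}$ weight.

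For the far-field region, \eqref{ab4-18-12} gives the uniform pointwise estimate $u^{(l)}_\varepsilon = O(\bar\lambda_\varepsilon^{-(N-2)/2})$ and hence $|C_\varepsilon(y)|=O(\bar\lambda_\varepsilon^{-2})$; combined with $\|\xi_\varepsilon\|_{L^\infty}\le 1$ and an absorption on $M_\varepsilon := \sup_{\Omega\setminus\bigcup_j B_d}|\xi_\varepsilon|$, this contribution is subsumed into the near-peak bound for $\varepsilon$ small. The $C^1$ estimate then follows from interior Schauder estimates applied to $-\Delta\xi_\varepsilon=C_\varepsilon\xi_\varepsilon$ on an annular neighbourhood of $\partial B_d(x^{(1)}_{\varepsilon,j})$ where $C_\varepsilon$ is uniformly H\"older bounded. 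The main obstacle is obtaining the quantitative pointwise decay $|\xi_\varepsilon(y)|\lesssim (1+\lambda^{(1)}_{\varepsilon,j}|y-x^{(1)}_{\varepsilon,j}|)^{-(N-2)}$ inside each $B_d(x^{(1)}_{\varepsilon,j})$: the qualitative $C^2_{\mathrm{loc}}$ convergence of $\tilde\xi_{\varepsilon,j}$ to a kernel element of the linearized problem is routine, but upgrading it to a uniform rate demands a barrier argument that remains robust in the presence of the $\varepsilon u^{s-1}_\varepsilon$ perturbation for all $s\in(1,2^*-1)$.
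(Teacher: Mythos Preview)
Your Green's representation starting point is the same as the paper's, but the route you take from there is different and contains the gap you yourself flag. You try to obtain the pointwise decay $|\xi_\varepsilon(y)|\lesssim (1+\lambda^{(1)}_{\varepsilon,j}|y-x^{(1)}_{\varepsilon,j}|)^{-(N-2)}$ inside each ball by first passing to the $C^2_{\mathrm{loc}}$ limit (a kernel element of the linearized bubble) and then invoking a barrier. The difficulty is real: $C^2_{\mathrm{loc}}$ convergence says nothing about the rescaled function near $|z|\sim d\lambda^{(1)}_{\varepsilon,j}$, and any comparison on the annulus $R\le |z|\le d\lambda^{(1)}_{\varepsilon,j}$ requires boundary information on the outer sphere, which is precisely the far-field estimate you are trying to prove. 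Your absorption scheme for $M_\varepsilon$ does not close because the near-peak input to it already presupposes the interior decay.

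The paper bypasses this entirely with a direct bootstrap of the integral representation, using only $|\xi_\varepsilon|\le 1$ as the seed. One application of the kernel estimate \eqref{AB.2} to
\[
\xi_\varepsilon(x)=\int_\Omega G(x,y)\,C_\varepsilon(y)\,\xi_\varepsilon(y)\,dy,
\qquad
C_\varepsilon(y)=O\Big(\sum_{j,l}U^{4/(N-2)}_{x^{(l)}_{\varepsilon,j},\lambda^{(l)}_{\varepsilon,j}}+\varepsilon\,U^{s-1}_{x^{(l)}_{\varepsilon,j},\lambda^{(l)}_{\varepsilon,j}}\Big),
\]
yields $|\xi_\varepsilon(x)|\lesssim \sum_{j,l}(1+\lambda^{(l)}_{\varepsilon,j}|x-x^{(l)}_{\varepsilon,j}|)^{-2}+(1+\lambda^{(l)}_{\varepsilon,j}|x-x^{(l)}_{\varepsilon,j}|)^{-(N-2)(s-1)}$ on all of $\Omega$; feeding this back in gains another such factor, and after finitely many iterations one reaches exponent $N-2$, with the logarithm appearing at the borderline step of \eqref{AB.2}. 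Restricting to $\Omega\setminus\bigcup_j B_d$ gives the $C^0$ bound, and repeating the argument with $D_{x^i}G$ in place of $G$ gives $C^1$. No kernel identification, no barriers, and no splitting into near/far regions are needed; in particular the convergence to $\sum a_{j,i}\psi_i$ is established \emph{after} this proposition, not as an input to it.
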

\begin{proof}
By the potential theory and  \eqref{3.2},  we have
\begin{equation}\label{cc6}
\begin{split}
 {\xi}_\varepsilon(x)=& \int_{\Omega}G(y,x)
 C_{\varepsilon}(y) \xi_{\varepsilon}(y){\mathrm d}y\\=&
O\left(\sum^k_{j=1}\sum^2_{l=1}\int_{\Omega}\frac{1}{|x-y|^{N-2}} \Big( U^{\frac{4}{N-2}}_{x^{(l)}_{\varepsilon,j},\lambda^{(l)}_{\varepsilon,j}}
(y)+\varepsilon U^{s-1}_{x^{(l)}_{\varepsilon,j},\lambda^{(l)}_{\varepsilon,j}}
(y)\Big) {\mathrm d}y \right).
\end{split}
\end{equation}
Next, by \eqref{AB.2}, we know
\begin{equation}\label{acc6}
 \int_{\Omega}\frac{1}{|x-y|^{N-2}}  U^{\frac{4}{N-2}}_{x^{(l)}_{\varepsilon,j},\lambda^{(l)}_{\varepsilon,j}}
(y)  {\mathrm d}y =
O\Big( \frac{1}{\big(1+\lambda^{(l)}_{\varepsilon,j}|x-x^{(l)}_{\varepsilon,j}|\big)^{2}}\Big),
\end{equation}
and
\begin{equation}\label{bcc6}
 \int_{\Omega}\frac{1}{|x-y|^{N-2}}  U^{s-1}_{x^{(l)}_{\varepsilon,j},\lambda^{(l)}_{\varepsilon,j}}
(y) {\mathrm d}y =
O\Big( \frac{1}{\big(1+\lambda^{(l)}_{\varepsilon,j}|x-x^{(l)}_{\varepsilon,j}|\big)^{\frac{(N-2)(s-1)}{2}}}\Big).
\end{equation}
Now from \eqref{4-26-61}, \eqref{cc6}, \eqref{acc6} and \eqref{bcc6}, we can compute
\begin{equation}\label{dd}
\begin{split}
 {\xi}_\varepsilon(x)=& \int_{\Omega}G(y,x)
 C_{\varepsilon}(y) \xi_{\varepsilon}(y){\mathrm d}y\\=&
O\left(\sum^m_{j=1}\sum^2_{l=1}
 \frac{1}{\big(1+\lambda^{(l)}_{\varepsilon,j}|x-x^{(l)}_{\varepsilon,j}|\big)^{2}} \right)+
 O\left(\sum^m_{j=1}\sum^2_{l=1}
 \frac{1}{\big(1+\lambda^{(l)}_{\varepsilon,j}|x-x^{(l)}_{\varepsilon,j}|\big)^{(N-2)(s-1)}}\right).
\end{split}
\end{equation}

Next repeating the above process, we know
\begin{equation*}
\begin{split}
{\xi}_\varepsilon(x) = &O\Big( \sum^m_{j=1}\sum^2_{l=1}\int_{\Omega}\frac{1}{|x-y|^{N-2}}
\frac{ C_{\varepsilon}(y) }{\big(1+\lambda^{(l)}_{\varepsilon,j}|x-x^{(l)}_{\varepsilon,j}|\big)^{2}}
 {\mathrm d}y\Big)\\&
+O\Big(  \sum^m_{j=1}\sum^2_{l=1}\int_{\Omega}\frac{1}{|x-y|^{N-2}}
\frac{ C_{\varepsilon}(y)}{\big(1+\lambda^{(l)}_{\varepsilon,j}|x-x^{(l)}_{\varepsilon,j}|\big)^{(N-2)(s-1)}}
{\mathrm d}y\Big)\\=&
O\left(\sum^m_{j=1}\sum^2_{l=1}
 \frac{1}{\big(1+\lambda^{(l)}_{\varepsilon,j}|x-x^{(l)}_{\varepsilon,j}|\big)^{4}} \right)+
 O\left(\sum^m_{j=1}\sum^2_{l=1}
 \frac{1}{\big(1+\lambda^{(l)}_{\varepsilon,j}|x-x^{(l)}_{\varepsilon,j}|\big)^{2(N-2)(s-1)}}\right).
\end{split}
\end{equation*}
Then we can proceed as in the above argument for finite number of times to prove
\begin{equation*}
{\xi}_\varepsilon(x) =
O\Big(\sum^m_{j=1}\sum^2_{l=1}\frac{\log \bar{\lambda}_\varepsilon}{\big(1+\lambda^{(l)}_{\varepsilon,j}
|x-x^{(l)}_{\varepsilon,j}|\big)^{N-2}}\Big).
\end{equation*}
Also  we know  $\frac{\partial
 {\xi}_\varepsilon(x)}{\partial x^i}=  \displaystyle\int_{\Omega}D_{x^i}G(y,x)
 C_{\varepsilon}(y) \xi_{\varepsilon}(y){\mathrm d}y$.
Hence repeating above process, we can complete the proof of \eqref{3-3}.
\end{proof}
\begin{Prop}\label{prop3-2}
Let $\xi_{\varepsilon,j}(x)=\xi_{\varepsilon}(\frac{x}{\lambda_{\varepsilon,j}^{(1)}}+x_{\varepsilon,j}^{(1)})$. Then by taking
a subsequence if necessary, we have
\begin{equation}\label{cc7}
 \xi_{\varepsilon,j}(x)\rightarrow \sum_{i=0}^N a_{j,i}\psi_{i}(x),~\mbox{uniformly in}~C^1\big(B_R(0)\big) ~\mbox{for any}~R>0,
\end{equation}
 where $a_{j,i}$, $i=0,1,\cdots,N$ are some constants  and $$\psi_{0}(x)=\frac{\partial U_{0,\lambda}(x)}{\partial\lambda}\big|_{\lambda=1},~~\psi_{i}(x)=\frac{\partial U_{x,1}(x)}{\partial x^i}\big|_{x=0},~i=1,\cdots,N.
$$
\end{Prop}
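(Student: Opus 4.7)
The plan is a standard blow-up analysis of the linearized equation \eqref{3.2} at each concentration point $a_j$. Introduce the rescaling $y=\lambda^{(1)}_{\varepsilon,j}(x-x^{(1)}_{\varepsilon,j})$, so that
\begin{equation*}
-\Delta_y\xi_{\varepsilon,j}(y)=\widetilde C_\varepsilon(y)\,\xi_{\varepsilon,j}(y),\qquad \|\xi_{\varepsilon,j}\|_{L^\infty(\mathbb{R}^N)}\leq 1,
\end{equation*}
where $\widetilde C_\varepsilon(y):=(\lambda^{(1)}_{\varepsilon,j})^{-2}\,C_\varepsilon(y/\lambda^{(1)}_{\varepsilon,j}+x^{(1)}_{\varepsilon,j})$. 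The key analytic step is to show that $\widetilde C_\varepsilon\to (2^*-1)\,U_{0,1}^{4/(N-2)}$ uniformly on every ball $B_R(0)$; compactness plus a classification theorem will then finish the argument.

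To establish the convergence of $\widetilde C_\varepsilon$, I would insert the decomposition $u^{(l)}_\varepsilon=\sum_{i=1}^m (Q(a_i))^{-(N-2)/4}PU_{x^{(l)}_{\varepsilon,i},\lambda^{(l)}_{\varepsilon,i}}+w^{(l)}_\varepsilon$ from Theorem \ref{prop1} into $C_\varepsilon$. On $B_R(0)$ in the $y$ variable: (a) the bubbles with index $i\neq j$ are centered at distance bounded below from $x^{(1)}_{\varepsilon,j}$, so they contribute $O((\lambda^{(1)}_{\varepsilon,j})^{-(N-2)})$; (b) the bubble with $(i,l)=(j,1)$ rescales exactly to $(\lambda^{(1)}_{\varepsilon,j})^{(N-2)/2}U_{0,1}(y)$; (c) the bubble with $(i,l)=(j,2)$ rescales to the \emph{same} $U_{0,1}$ in the limit, because $\lambda^{(1)}_{\varepsilon,j}/\lambda^{(2)}_{\varepsilon,j}\to 1$ and $\lambda^{(1)}_{\varepsilon,j}|x^{(2)}_{\varepsilon,j}-x^{(1)}_{\varepsilon,j}|\to 0$ by \eqref{4-26-61} combined with \eqref{lp111}; (d) the remainders $w^{(l)}_\varepsilon$ are negligible at the blow-up scale after bootstrapping \eqref{a4-18-12} to $L^\infty$ via the elliptic equation they satisfy; (e) continuity of $Q$ makes the prefactor $Q(x)$ limit to $Q(a_j)$, which cancels the factor $(Q(a_j))^{-1}$ arising from $(u^{(l)}_\varepsilon)^{4/(N-2)}$. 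The subcritical term requires a separate check: the rescaled contribution $\varepsilon s(\ldots)^{s-1}/(\lambda^{(1)}_{\varepsilon,j})^2$ is of order $\varepsilon(\lambda^{(1)}_{\varepsilon,j})^{(N-2)(s-1)/2-2}\sim (\lambda^{(1)}_{\varepsilon,j})^{-2}\to 0$, thanks to the asymptotic $\lambda^{(1)}_{\varepsilon,j}\sim \varepsilon^{-2/((N-2)(s-1))}$ from \eqref{4-26-61}.

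Given the uniform convergence of $\widetilde C_\varepsilon$ and the uniform $L^\infty$ bound $|\xi_{\varepsilon,j}|\leq 1$, interior $W^{2,p}$ estimates applied to the rescaled equation yield uniform $C^{1,\alpha}$ bounds on every compact set. Arzel\`a--Ascoli together with a diagonal extraction produces a subsequence of $\xi_{\varepsilon,j}$ converging in $C^1_{\mathrm{loc}}(\mathbb{R}^N)$ to some bounded $\xi_*$ solving
\begin{equation*}
-\Delta\xi_*=(2^*-1)\,U_{0,1}^{4/(N-2)}\,\xi_*\quad\text{in }\mathbb{R}^N.
\end{equation*}
By the well-known nondegeneracy/classification result for the Aubin--Talenti bubble, every bounded solution of this linearized equation is a linear combination of $\psi_0,\psi_1,\ldots,\psi_N$, which is precisely \eqref{cc7}.

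The step I expect to be the main obstacle is item (c): although both $x^{(1)}_{\varepsilon,j}$ and $x^{(2)}_{\varepsilon,j}$ converge to $a_j$, in the rescaled variable $y=\lambda^{(1)}_{\varepsilon,j}(x-x^{(1)}_{\varepsilon,j})$ the second bubble could \emph{a priori} produce a translated profile $U_{z_j,1}$ with $z_j\neq 0$, which would spoil the identification of the limiting coefficient and hence of the kernel. Ruling this out requires $\lambda^{(1)}_{\varepsilon,j}|x^{(2)}_{\varepsilon,j}-x^{(1)}_{\varepsilon,j}|\to 0$, and this is exactly where one genuinely uses the sharp location estimate \eqref{4-26-61} applied to \emph{both} centers, together with the ratio hypothesis \eqref{lp111}.
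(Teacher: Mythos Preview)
Your proposal is correct and follows essentially the same blow-up argument as the paper: rescale \eqref{3.2}, show the rescaled coefficient converges to $(2^*-1)U_{0,1}^{4/(N-2)}$ on compacts (using \eqref{4-26-61} and \eqref{lp111} exactly as you describe for the matching of the two bubbles at $a_j$), extract a $C^1_{\mathrm{loc}}$ limit via elliptic regularity, and invoke the nondegeneracy of $U_{0,1}$. The paper organizes the coefficient estimate slightly differently---it first bounds $U_{x^{(1)}_{\varepsilon,j},\lambda^{(1)}_{\varepsilon,j}}-U_{x^{(2)}_{\varepsilon,j},\lambda^{(2)}_{\varepsilon,j}}=o(1)\,U_{x^{(1)}_{\varepsilon,j},\lambda^{(1)}_{\varepsilon,j}}$ by a first-order Taylor expansion in the parameters, which is equivalent to your item (c)---but the substance is identical.
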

\begin{proof}
Since $\xi_{\varepsilon,j}(x)$ is bounded, by the regularity theory in \cite{Gilbarg}, we find
$$\xi_{\varepsilon,j}(x)\in {C^{1,\alpha}\big(B_r(0)\big)}~ \mbox{and}~  \|\xi_{\varepsilon,j}\|_{C^{1,\alpha}\big(B_r(0)\big)}\leq C,$$
for any fixed large $r$ and $\alpha \in (0,1)$ if $\varepsilon$ is small, where the constants $r$ and $C$ are independent of $\varepsilon$ and $j$.
So we may assume that $\xi_{\varepsilon,j}(x)\rightarrow\xi_{j}(x)$ in $C^1\big(B_r(0)\big)$. By direct calculations, we know
\begin{small}
\begin{equation}\label{tian41}
\begin{split}
-\Delta\xi_{\varepsilon,j}(x)=&-\frac{1}{(\lambda^{(1)}_{\varepsilon,j})^{2}}\Delta \xi_{\varepsilon}\big(\frac{x}{\lambda^{(1)}_{\varepsilon,j}}+x_{\varepsilon,j}^{(1)}\big)=
\frac{1}{(\lambda^{(1)}_{\varepsilon,j})^{2}}C_{\varepsilon}(\frac{x}{\lambda^{(1)}_{\varepsilon,j}}
+x_{\varepsilon,j}^{(1)})\xi_{\varepsilon,j}(x).
\end{split}
\end{equation}
\end{small}
Now, we estimate $\frac{1}{(\lambda^{(1)}_{\varepsilon,j})^{2}}C_{\varepsilon}(\frac{x}{\lambda^{(1)}_{\varepsilon,j}}
+x_{\varepsilon,j}^{(1)})$. By \eqref{4-26-61}, we have
\begin{equation*}
\begin{split}
U&_{x_{\varepsilon,j}^{(1)},\lambda_{\varepsilon,j}^{(1)}}(x)
-
U_{x_{\varepsilon,j}^{(2)},\lambda_{\varepsilon,j}^{(2)}}(x)
 \\=&
O\Big(\big|x_{\varepsilon,j}^{(1)}-x_{\varepsilon,j}^{(2)}\big|\cdot \big(\nabla_y U_{y,\lambda_{\varepsilon,j}^{(1)}}(x)|_{y=x_{\varepsilon,j}^{(1)}}\big)+
\big|\lambda_{\varepsilon,j}^{(1)}-\lambda_{\varepsilon,j}^{(2)}\big|\cdot\big( \nabla_\lambda U_{x_{\varepsilon,j}^{(1)},\lambda}(x) |_{\lambda=\lambda_{\varepsilon,j}^{(1)}}\big)
\Big)\\=&
O\Big(\lambda_{\varepsilon,j}^{(1)}\big|x_{\varepsilon,j}^{(1)}-x_{\varepsilon,j}^{(2)}\big| +
(\lambda_{\varepsilon,j}^{(1)})^{-1}|\lambda_{\varepsilon,j}^{(1)}-\lambda_{\varepsilon,j}^{(2)}| \Big) U_{x_{\varepsilon,j}^{(1)},\lambda_{\varepsilon,j}^{(1)}}(x)
=o\big(1\big)U_{x_{\varepsilon,j}^{(1)},\lambda_{\varepsilon,j}^{(1)}}(x),
\end{split}
\end{equation*}
which means
\begin{equation*}
u_{\varepsilon}^{(1)}(x)-u_{\varepsilon}^{(2)}(x)=
o\big(1\big)\Big(\sum_{j=1}^mU_{x_{\varepsilon,j}^{(1)},\lambda_{\varepsilon,j}^{(1)}}(x)\Big)
+O\Big(\sum^2_{l=1}|w_{\varepsilon}^{(l)}(x)|\Big).
\end{equation*}
Then for a small fixed $d>0$, we find
\begin{equation*}
\begin{split}\frac{1}{(\lambda^{(1)}_{\varepsilon,j})^{2}}C_{\varepsilon}
(\frac{x}{\lambda^{(1)}_{\varepsilon,j}}
+x_{\varepsilon,j}^{(1)}) \rightarrow  \frac{N+2}{N-2} U^{\frac{4}{N-2}}_{0,1}(x),~\mbox{for}~\frac{x}{\lambda^{(1)}_{\varepsilon,j}}
+x_{\varepsilon,j}^{(1)}\in B_{d}(0).
\end{split}
\end{equation*}
Letting $\varepsilon\rightarrow 0$ in \eqref{tian41} and using the elliptic regularity theory, we find that $\xi_{j}(x)$ satisfies
\begin{equation*}
-\Delta\xi_{j}(x)=\Big(\frac{N+2}{N-2}\Big)U_{0,1}^{\frac{4}{N-2}}(x)\xi_{j}(x),~~\textrm{in~}\R^N,
\end{equation*}
which gives $
\xi_{j}(x)=\displaystyle\sum_{i=0}^Na_{j,i}\psi_i(x)$.
\end{proof}

\begin{Prop}
For $N\geq 5$, it holds
\begin{equation}\label{luo--13}
a_{j,i}=0,~\mbox{for}~j=1,\cdots,m~\mbox{and}~i=1,\cdots,N,
\end{equation}
where $a_{j,i}$ are the constants in \eqref{cc7}.
\end{Prop}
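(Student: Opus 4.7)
The plan is to test the first local Pohozaev identity \eqref{dclp-1} with $\Omega'=B_d(x^{(1)}_{\varepsilon,j})$ for each fixed $j\in\{1,\dots,m\}$ and each direction $i\in\{1,\dots,N\}$, and then use the concentration profile from Proposition \ref{prop3-2} together with the pointwise decay \eqref{3-3} of $\xi_\varepsilon$ to extract from this a linear system
\begin{equation*}
\sum_{l=1}^{N}\frac{\partial^2 Q(a_j)}{\partial x^i\partial x^l}\,a_{j,l}\;=\;0,\qquad i=1,\dots,N,
\end{equation*}
from which the claim $a_{j,i}=0$ follows directly by Condition (Q), since $\det\big(\partial_{x^i x^l}^2Q(a_j)\big)\neq 0$.

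\smallskip
\noindent\textbf{Estimate of the volume term (leading order).} The only volume term on the left of \eqref{dclp-1} is $\int_{B_d(x^{(1)}_{\varepsilon,j})}\partial_{x^i}Q(x)\,D_{1,\varepsilon}(x)\,\xi_\varepsilon(x)\,\mathrm{d}x$. Writing $\lambda=\lambda^{(1)}_{\varepsilon,j}$, I would perform the change of variables $y=\lambda(x-x^{(1)}_{\varepsilon,j})$. Using $|x^{(1)}_{\varepsilon,j}-a_j|=o(1/\bar\lambda_\varepsilon)$ from \eqref{4-26-61}, the Taylor expansion $\partial_{x^i}Q(x)=\sum_{l}\partial_{x^i x^l}^2Q(a_j)(x^l-a_j^l)+O(|x-a_j|^2)$, and the fact that $D_{1,\varepsilon}(y/\lambda+x^{(1)}_{\varepsilon,j})$ converges (after multiplication by $\lambda^{-(N+2)/2}$) to $Q(a_j)^{-(N+2)/4}U_{0,1}^{2^*-1}(y)$, I get
\begin{equation*}
\int_{B_d(x^{(1)}_{\varepsilon,j})}\!\!\partial_{x^i}Q(x)\,D_{1,\varepsilon}\,\xi_\varepsilon \,\mathrm{d}x
= \frac{Q(a_j)^{-\frac{N+2}{4}}}{\lambda^{N/2}}\sum_{l=1}^N\frac{\partial^2 Q(a_j)}{\partial x^i\partial x^l}\int_{\mathbb{R}^N} y^l\,U_{0,1}^{2^*-1}(y)\,\xi_j(y)\,\mathrm{d}y + o\bigl(\bar\lambda_\varepsilon^{-N/2}\bigr),
\end{equation*}
where $\xi_j=\sum_{r=0}^{N}a_{j,r}\psi_r$ is the limit given by Proposition \ref{prop3-2} and I use that the convergence is in $C^1$ on compacts together with the integrable decay of the kernel $|y|\,U_{0,1}^{2^*-1}(y)$ at infinity. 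By the parity of $\psi_r$, the integrals $\int y^l U_{0,1}^{2^*-1}\psi_0$ vanish (integrand odd), $\int y^l U_{0,1}^{2^*-1}\psi_r$ vanishes for $l\neq r$, and for $l=r$ equals a nonzero explicit constant $C^\ast$. Hence the leading coefficient is $Q(a_j)^{-(N+2)/4}C^\ast\sum_{l}\partial_{x^ix^l}^2Q(a_j)\,a_{j,l}\cdot\lambda^{-N/2}$.

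\smallskip
\noindent\textbf{Estimate of the boundary terms.} All boundary contributions on the right of \eqref{dclp-1} are supported on $\partial B_d(x^{(1)}_{\varepsilon,j})$, well away from the concentration points. From \eqref{ab4-18-12} we have $u^{(l)}_\varepsilon=O(\bar\lambda_\varepsilon^{-(N-2)/2})$ in $C^1$ on this boundary, and from \eqref{3-3}, $\xi_\varepsilon,\nabla\xi_\varepsilon=O(\log\bar\lambda_\varepsilon/\bar\lambda_\varepsilon^{N-2})$ there. Consequently every boundary term is $O\bigl(\log\bar\lambda_\varepsilon\cdot\bar\lambda_\varepsilon^{-(3N-6)/2}\bigr)$; using \eqref{4-26-61} and $s>1$, even the $\varepsilon D_{2,\varepsilon}\xi_\varepsilon$ contribution enjoys the same smallness. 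Since $N\geq 5$, we have $(3N-6)/2>N/2$, so the boundary is $o\bigl(\bar\lambda_\varepsilon^{-N/2}\bigr)$ and cannot compete with the volume term.

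\smallskip
\noindent\textbf{Conclusion.} Combining the two estimates, dividing \eqref{dclp-1} by $\bar\lambda_\varepsilon^{-N/2}$ and letting $\varepsilon\to 0$ yields $\sum_{l=1}^{N}\partial_{x^i x^l}^2Q(a_j)\,a_{j,l}=0$ for every $i=1,\dots,N$ and every $j=1,\dots,m$. The nondegeneracy of the Hessian in Condition (Q) forces $a_{j,i}=0$ for $i=1,\dots,N$, as claimed. The main obstacle I anticipate is the careful justification of the limit of the rescaled volume integral: I need the $C^1$-convergence $\xi_{\varepsilon,j}\to\xi_j$ on arbitrarily large balls to upgrade to an $L^1$-style statement against the weight $|y|\,U_{0,1}^{2^*-1}(y)$, which requires quantitative tail control on $\xi_\varepsilon$ at scales $|y|\sim d\lambda^{(1)}_{\varepsilon,j}$---this is precisely what \eqref{3-3} provides, so the argument closes.
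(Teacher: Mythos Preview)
Your proposal is correct and follows essentially the same route as the paper: test the Pohozaev identity \eqref{dclp-1} on $B_d(x^{(1)}_{\varepsilon,j})$, rescale the volume term to extract the leading contribution $Q(a_j)^{-(N+2)/4}C^\ast\sum_l\partial^2_{x^ix^l}Q(a_j)\,a_{j,l}\,\lambda^{-N/2}$ via Proposition~\ref{prop3-2}, bound the boundary terms by $O(\log\bar\lambda_\varepsilon/\bar\lambda_\varepsilon^{3(N-2)/2})$ using \eqref{ab4-18-12} and \eqref{3-3}, and conclude by the nondegeneracy of the Hessian in Condition~(Q). Your write-up is in fact more detailed than the paper's (which compresses all of this into two displayed estimates); one small remark is that the tail justification you flag at the end can be handled more simply by dominated convergence, since $|\xi_\varepsilon|\le 1$ and $|y|\,U_{0,1}^{2^*-1}(y)$ is integrable on~$\mathbb{R}^N$.
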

\begin{proof}
First using  \eqref{3-3} and \eqref{cc7}, we compute that
\begin{equation}\label{dclp-2}
\begin{split}
\mbox{LHS of \eqref{dclp-1}}=\big(Q(a_j)\big)^{-\frac{N+2}{4}}\left(
\sum^N_{l=1}\Big(\int_{\R^N}x^jU_{0,1}^{\frac{N+2}{N-2}}(x)\psi_j(x)   \Big)\frac{\partial^2Q(a_j)}{\partial x^i\partial x_l}{a_j^l}+o(1)\right)\frac{1}{\big(\lambda^{(1)}_{j,\e}\big)^{\frac{N}{2}}}.
\end{split}\end{equation}
And from \eqref{ab4-18-12} and \eqref{3-3}, it holds
\begin{equation}\label{dclp-3}
\begin{split}
\mbox{RHS of \eqref{dclp-1}}= O\Big(\frac{\log \bar \lambda_{\e}}{\bar \lambda_{\e}^{\frac{3(N-2)}{2}}}  \Big).
\end{split}\end{equation}
Hence we find \eqref{luo--13} by \eqref{dclp-2} and \eqref{dclp-3}.
\end{proof}
\begin{Prop}
For $N\geq 5$, it holds
\begin{equation}\label{luo-13}
a_{j,0}=0,~\mbox{for}~j=1,\cdots,m,
\end{equation}
where $a_{j,0}$ are the constants in Proposition \ref{prop3-2}.
\end{Prop}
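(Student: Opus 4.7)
The plan is to apply the second local Pohozaev identity \eqref{dclp-10} on $\Omega' = B_d(x^{(1)}_{\e,j})$, exactly as the preceding proposition used \eqref{dclp-1} to obtain \eqref{luo--13}. Once we know $a_{j,i}=0$ for $i\geq 1$, Proposition \ref{prop3-2} gives $\xi_{\e,j}\to a_{j,0}\psi_0$ in $C^1_{loc}$, so the volume integrals on the LHS of \eqref{dclp-10} will isolate the coefficient $a_{j,0}$, while the boundary integrals on the RHS, controlled by \eqref{ab4-18-12} and \eqref{3-3}, will be of strictly smaller order when $N\geq 5$.

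For the first volume term $\int_{B_d}\bigl((x-x^{(1)}_{\e,j})\cdot\nabla Q(x)\bigr)D_{1,\e}\,\xi_\e\,dx$, I would Taylor expand $\nabla Q$ about $a_j$ (using $\nabla Q(a_j)=0$), use $|x^{(1)}_{\e,j}-a_j|=o(\bar\lambda_\e^{-1})$ from \eqref{4-26-61} and the radial symmetry of $\psi_0$ to discard the cross term, and then rescale via $y=\lambda^{(1)}_{\e,j}(x-x^{(1)}_{\e,j})$. This produces a leading contribution of order $a_{j,0}\,\Delta Q(a_j)\,(\lambda^{(1)}_{\e,j})^{-(N+2)/2}$ with an integral factor $I_0:=\int_{\R^N}|y|^2 U^{2^*-1}_{0,1}\psi_0\,dy$, computable (and negative) by differentiating the $\lambda^{-2}$ scaling of $\int|y|^2 U^{2^*}_{0,\lambda}\,dy$. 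The second volume term $[(1+s)(1-\tfrac N2)+N]\,\e\int D_{2,\e}\xi_\e$ is handled in the same spirit; using the balance $\e(\lambda^{(1)}_{\e,j})^{(N-2)(s-1)/2}\to K>0$ implicit in \eqref{2020-01-02-11}, it also contributes at the exact order $a_{j,0}\,(\lambda^{(1)}_{\e,j})^{-(N+2)/2}$, with integral factor $I_s:=\int U^s_{0,1}\psi_0=-\tfrac{4-(N-2)(s-1)}{2(s+1)}B$, obtained by differentiating the identity in \eqref{3-13-08} under $\partial_\lambda$.

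For the boundary integrals on the RHS of \eqref{dclp-10}, the off-peak decay $u^{(l)}_\e,\,|\nabla u^{(l)}_\e|=O(\bar\lambda_\e^{-(N-2)/2})$ from \eqref{ab4-18-12} combined with the pointwise estimate $\xi_\e,\,|\nabla\xi_\e|=O(\log\bar\lambda_\e\cdot\bar\lambda_\e^{-(N-2)})$ from \eqref{3-3} yields an overall bound $O(\log\bar\lambda_\e\cdot\bar\lambda_\e^{-3(N-2)/2})$. The inequality $\tfrac{3(N-2)}{2}>\tfrac{N+2}{2}$ is equivalent to $N\geq 5$, which is precisely where this hypothesis enters; under it, the boundary terms are genuinely negligible compared with the leading LHS order $(\lambda^{(1)}_{\e,j})^{-(N+2)/2}$.

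Assembling these estimates, the Pohozaev identity collapses to $a_{j,0}\,\mathcal{C}=o\bigl((\lambda^{(1)}_{\e,j})^{-(N+2)/2}\bigr)$ for an explicit constant
\[
\mathcal{C}=\frac{\Delta Q(a_j)\,I_0}{N\,(Q(a_j))^{(N+2)/4}}+K\,\bigl[(1+s)(1-\tfrac{N}{2})+N\bigr](Q(a_j))^{-(N-2)s/4}\,I_s.
\]
The main obstacle will be verifying $\mathcal{C}\neq 0$: since $\Delta Q(a_j)<0$ and $I_0<0$ the first term is positive, whereas $K>0$, $[(1+s)(1-\tfrac N2)+N]>0$ for $s<2^*-1$, and $I_s<0$ make the second term negative, so a priori cancellation could occur. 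Ruling it out is a direct but careful computation: substitute the explicit value of $K$ from \eqref{2020-01-02-11} together with the closed forms of $I_0,I_s$ in terms of the constants $A,B$ of Lemma \ref{Lemma3.1}, and check that the algebraic combination simplifies to a nonzero quantity. Once $\mathcal{C}\neq 0$ is verified, the conclusion $a_{j,0}=0$ follows.
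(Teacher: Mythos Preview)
Your proposal is correct and follows the same route as the paper: apply \eqref{dclp-10}, rescale both volume integrals to order $(\lambda^{(1)}_{\e,j})^{-(N+2)/2}$, and bound the boundary terms by $O(\log\bar\lambda_\e\cdot\bar\lambda_\e^{-3(N-2)/2})$ via \eqref{ab4-18-12} and \eqref{3-3}, which is negligible precisely when $N\geq 5$.

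The one place where the paper is slightly cleaner than your outline is the verification that $\mathcal{C}\neq 0$. Rather than extracting the asymptotic constant $K=\lim \e(\lambda^{(1)}_{\e,j})^{(N-2)(s-1)/2}$ from \eqref{2020-01-02-11} and substituting into $\mathcal{C}$, the paper uses the Pohozaev identity \eqref{clp-10} for $u^{(l)}_\e$ itself (via \eqref{2020-01-02-1}, \eqref{2020-01-02-2}) to replace the $\e$-term directly. After this substitution the two volume contributions combine algebraically into
\[
\frac{(N-2)^2(1-s)}{4N}\,(Q(a_j))^{-\frac{N+2}{4}}\,A\,\Delta Q(a_j)\,\frac{a_{j,0}}{(\lambda^{(1)}_{\e,j})^{(N+2)/2}},
\]
which is manifestly nonzero since $s>1$ and $\Delta Q(a_j)<0$. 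Your approach yields the same constant after carrying out the substitution, but the paper's packaging makes the non-cancellation immediate rather than a ``direct but careful computation''.
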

\begin{proof}
First using  \eqref{3-3} and \eqref{cc7}, we compute that
\begin{equation*}
\begin{split}
\int_{B_d(x^{(1)}_{\e,j})}&   ( x-x_{\varepsilon,j}) \cdot \nabla Q(x)      D_{1,\varepsilon}\xi_\e {\mathrm d}x
\\ =&\big(Q(a_j)\big)^{-\frac{N+2}{4}}\frac{\Delta Q(a_j)}{N}
\frac{a_{j,0}}{\big(\lambda^{(1)}_{j,\e}\big)^{\frac{N+2}{2}}}\int_{\R^N}|x|^2U^{\frac{N+2}{N-2}}_{0,1}(x)
\frac{\partial U_{0,\lambda}}{\partial \lambda}\big|_{\lambda=1}{\mathrm d}x
+o\Big(
\frac{1}{\bar \lambda_{\e}^{\frac{N+2}{2}}}   \Big)
\\ =&-\big(Q(a_j)\big)^{-\frac{N+2}{4}}\frac{(N-2)A}{N}
\frac{a_{j,0}}{\big(\lambda^{(1)}_{\e,j}\big)^{\frac{N+2}{2}}}\Delta Q(a_j)
+o\Big(
\frac{1}{\bar \lambda_{\e}^{\frac{N+2}{2}}}   \Big).
 \end{split}\end{equation*}
Here we use the fact
$$\int_{\R^N}|x|^2U^{\frac{N+2}{N-2}}_{0,1}(x)
\frac{\partial U_{0,\lambda}}{\partial \lambda}\big|_{\lambda=1}{\mathrm d}x=
\frac{1}{2^*}
\frac{\partial }{\partial \lambda}\Big( \int_{\R^N}|x|^2U^{2^*}_{0,\lambda}(x)
  {\mathrm d}x    \Big) \Big|_{\lambda=1} =-(N-2)A.
$$
Similarly,
\begin{equation*}
\begin{split}
 \int_{B_d(x^{(1)}_{\e,j})}&  D_{2,\varepsilon}\xi_\e {\mathrm d}x
=\big(Q(a_j)\big)^{-\frac{(N-2)s}{4}}
\frac{a_{j,0}}{\big(\lambda^{(1)}_{j,\e}\big)^{N-\frac{(N-2)s}{2}}}\int_{\R^N} U^{s}_{0,1}(x)
\frac{\partial U_{0,\lambda}}{\partial \lambda}\big|_{\lambda=1}{\mathrm d}x
+o\Big(
\frac{1}{\bar \lambda_{\e}^{N-\frac{(N-2)s}{2}}}\Big).
\end{split}
\end{equation*}
Also we know
\begin{equation*} \int_{\R^N} U^{s}_{0,1}(x)
\frac{\partial U_{0,\lambda}}{\partial \lambda}\big|_{\lambda=1}{\mathrm d}x=
\frac{1}{s+1}
\frac{\partial }{\partial \lambda}\Big( \int_{\R^N} U^{s+1}_{0,\lambda}(x)
  {\mathrm d}x    \Big) \Big|_{\lambda=1} =-\Big(\frac{N}{s+1}+1-\frac{N}{2}   \Big)B.
\end{equation*}
Next from \eqref{clp-10}, \eqref{2020-01-02-1} and \eqref{2020-01-02-2}, we find
\begin{equation*}
\begin{split}
\frac{1}{2^*}&\big(Q(a_j)\big)^{-\frac{N+2}{4}}\frac{(A+o(1))\Delta Q(a_j)}{\big(\lambda^{(1)}_{\e,j}\big)^{2}}=
-(1-\frac{N}{2}+\frac{N}{1+s})\big(Q(a_j)\big)^{-\frac{(N-2)s}{4}}
\frac{\varepsilon \big(B+o(1)\big)}{\big(\lambda^{(1)}_{j,\e}\big)^{2-\frac{(N-2)(s-1)}{2}}}.
\end{split}\end{equation*}
Hence we find
\begin{equation}\label{luo-131}
\mbox{LHS of \eqref{dclp-10}}=
\frac{(N-2)^2(1-s)}{4N}\big(Q(a_j)\big)^{-\frac{N+2}{4}}
\frac{Aa_{j,0}}{\big(\lambda^{(1)}_{\e,j}\big)^{\frac{N+2}{2}}}\Delta Q(a_j)
+o\Big(\frac{1}{\bar \lambda_{\e}^{\frac{N+2}{2}}}   \Big).
 \end{equation}
 On the other hand, from \eqref{ab4-18-12} and \eqref{3-3}, we have
 \begin{equation}\label{luo-132}
\begin{split}
\mbox{RHS of \eqref{dclp-10}}= O\Big(\frac{\log \bar \lambda_{\e}}{\bar \lambda_{\e}^{\frac{3(N-2)}{2}}}  \Big).
\end{split}\end{equation}
So we deduce \eqref{luo-13} by \eqref{luo-131} and \eqref{luo-132}.
\end{proof}

\begin{Rem}
Here we point out that we can't find \eqref{luo-13} when $N=4$. Because in this case, similar to \eqref{luo-131}, we can compute  that the main term of LHS of \eqref{dclp-10} is $0$. In fact
\begin{equation*}
\mbox{LHS of \eqref{dclp-10}}=O\Big(\frac{1}{\bar \lambda_{\e}^{3}}   \Big),~\mbox{and}~
\mbox{RHS of \eqref{dclp-10}}=O\Big( \frac{\log \lambda^{(1)}_{\e,j}}{\big(\lambda^{(1)}_{\e,j}\big)^{3}}\Big),
 \end{equation*}
which is not enough to get \eqref{luo-13}.
\end{Rem}

\begin{proof}[\textbf{Proof of Theorem \ref{th1.3}:}]
First, from \eqref{dd}, it holds,
\begin{equation*}
|\xi_{\varepsilon}(x)|=O\Big(\frac{1}{R^2}   \Big)+
O\Big(\frac{1}{R^{(N-2)(s-1)}}   \Big),~\mbox{for}~
x\in\Omega\backslash\bigcup_{j=1}^mB_{R(\lambda_{\varepsilon,j}^{(1)})^{-1}}(x_{\varepsilon,j}^{(1)}),
\end{equation*}
which implies that for any fixed $\gamma\in (0,1)$ and small $\varepsilon$, there exists $R_1>0$,
\begin{equation}\label{tian92}
|\xi_{\varepsilon}(x)|\leq \gamma,~ x\in\Omega\backslash\bigcup_{j=1}^mB_{R_1(\lambda_{\varepsilon,j}^{(1)})^{-1}}(x_{\varepsilon,j}^{(1)}).
\end{equation}
Also for the above fixed $R_1$, from \eqref{luo--13} and \eqref{luo-13}, we have
\begin{equation*}
\xi_{\varepsilon,j}(x)=o(1)~\mbox{in}~ B_{R_1}(0),~j=1,\cdots,m.
\end{equation*}
We know $\xi_{\varepsilon,j}(x)=\xi_{\varepsilon}(
\frac{x}{\lambda_{\varepsilon,j}^{(1)}}+x_{\varepsilon,j}^{(1)})$, so it follows
\begin{equation}\label{tian91}
\xi_{\varepsilon}(x)=o(1),~x\in \bigcup_{j=1}^m B_{R_1(\lambda_{\varepsilon,j}^{(1)})^{-1}}(x_{\varepsilon,j}^{(1)}).
\end{equation}
Hence for any fixed $\gamma\in (0,1)$ and small $\varepsilon$, \eqref{tian92} and \eqref{tian91} imply
$|\xi_{\varepsilon}(x)|\leq \gamma$ for all $x\in \Omega$,
which is in contradiction with $\|\xi_{\varepsilon}\|_{L^{\infty}(\Omega)}=1$. As a result, $u^{(1)}_{\varepsilon}(x)\equiv u^{(2)}_{\varepsilon}(x)$ for small $\varepsilon$.
\end{proof}

\appendix
\renewcommand{\theequation}{A.\arabic{equation}}

\setcounter{equation}{0}

\section{Proofs of various Pohozaev identities}
\setcounter{equation}{0}

\begin{proof}[\underline{\textbf{Proof of \eqref{clp-1}}}]
First  by  multiplying $\frac{\partial u_\varepsilon}{\partial x^i}$ on both sides of (1.1) and integrating on $\Omega'$, we have
\begin{equation}\label{1}
    \int_{\Omega'}-\Delta u_\varepsilon \frac{\partial u_\varepsilon}{\partial x^i}{\mathrm d}x=\int_{\Omega'}\Big(Q(x)u_\varepsilon^\frac{N+2}{N-2}+\varepsilon u_\varepsilon^{s}  \Big)\frac{\partial u_\varepsilon}{\partial x^i}{\mathrm d}x.
\end{equation}
Next, we have
\begin{equation}\label{2.7}
\begin{aligned}
\textrm{LHS of \eqref{1}}=&- \int_{\partial\Omega'}\frac{\partial u_\varepsilon(x)}{\partial x^i}\frac{\partial u_\varepsilon(x)}{\partial \nu}{\mathrm d}\sigma+ \int_{\Omega'}\nabla u_\varepsilon(x)\cdot\nabla\frac{\partial u_\varepsilon(x)}{\partial x^i}{\mathrm d}x \\ =&- \int_{\partial\Omega'}\frac{\partial u_\varepsilon(x)}{\partial x^i}\frac{\partial u_\varepsilon(x)}{\partial \nu}{\mathrm d}\sigma+\frac{1}{2}\int_{\partial\Omega'}|\nabla u_\varepsilon(x)|^2\nu^i(x){\mathrm d}\sigma.
\end{aligned}
\end{equation}
On the other hand, by Green's formula, we have
\begin{equation}\label{2.9}
\begin{split}
\textrm{RHS of \eqref{1}}=&\frac{1}{2^*}\int_{\Omega'} Q(x)\frac{\partial u^{2^*}_\varepsilon}{\partial x^i} {\mathrm d}x
+
\frac{\varepsilon}{s+1}\int_{\Omega'} \frac{\partial u_\varepsilon^{s+1}}{\partial x^i}{\mathrm d}x\\=&
 \frac{1}{2^*}\int_{\partial \Omega'} Q(x) u_\varepsilon^{2^*}\nu^i {\mathrm d}\sigma+ \frac{\varepsilon}{{s+1}}\int_{\partial \Omega'} u_\varepsilon^{s+1}\nu^i{\mathrm d}\sigma-
 \frac{1}{2^*}\int_{\Omega'} \frac{\partial Q(x)}{\partial x^i} u_\varepsilon^{2^*}{\mathrm d}x.
\end{split}\end{equation}
Then \eqref{clp-1} follows from   \eqref{2.7} and \eqref{2.9}.

\end{proof}
\begin{proof}[\underline{\textbf{Proof of \eqref{clp-10}}}]
Multiplying $(x-x_{\varepsilon,j})\cdot\nabla u_{\varepsilon}$ on both sides of \eqref{1.1} and integrating on $\Omega'$, we have
\begin{equation}\label{2019-11-24-01}
    \int_{\Omega'}-\Big((x-x_{\varepsilon,j})\cdot\nabla u_{\varepsilon}\Big)\Delta u_\varepsilon {\mathrm d}x=\int_{\Omega'}\Big((x-x_{\varepsilon,j})\cdot\nabla u_{\varepsilon}\Big) \Big(Q(x)u_\varepsilon^{\frac{N+2}{N-2}}+\varepsilon u_\varepsilon^{s}  \Big) {\mathrm d}x.
\end{equation}
First, we compute
\begin{equation}\label{2019-11-24-02}
\begin{split}
\mbox{RHS of \eqref{2019-11-24-01}}=&
\int_{\Omega'}\left(\frac{1}{2^*}\Big( \big(Q(x)(x-x_{\varepsilon,j})\big)\cdot \nabla u^{2^*}_{\varepsilon}\Big)+
\frac{\varepsilon}{s+1}\Big(\big( x-x_{\varepsilon,j}\big)\cdot\nabla u^{s+1}_{\varepsilon}\Big)
     \right){\mathrm d}x
\\ =&
\int_{\partial \Omega'} \Big(\frac{Q(x)}{2^*}u^{2^*}_\varepsilon+\frac{\varepsilon}{{s+1}
  } u_{\varepsilon}^{s+1}    \Big)  \Big(\big(x-x_{\varepsilon,j}\big)\cdot\nu\Big){\mathrm d}\sigma
\\  &
-\int_{\Omega'} \left(\frac{1}{2^*}\Big(  ( x-x_{\varepsilon,j}) \cdot \nabla Q(x) u^{2^*}_\varepsilon +NQ(x)   \Big)+\frac{N\varepsilon Q(x)}{{s+1}
  } u_{\varepsilon}^{s+1}   \right){\mathrm d}x.
\end{split}
\end{equation}
Next we find
\begin{equation}\label{2019-11-24-03}
\begin{split}
\mbox{LHS of \eqref{2019-11-24-01}}=&
\int_{\Omega'}\nabla u_\varepsilon \cdot \nabla\Big(\big(x-x_{\varepsilon,j}\big)\cdot\nabla u_{\varepsilon}   \Big){\mathrm d}x
-\int_{\partial\Omega'}
\Big(\big(x-x_{\varepsilon,j}\big)\cdot\nabla u_{\varepsilon}\Big)\frac{\partial u_\varepsilon}{\partial\nu}{\mathrm d}\sigma\\=&
\int_{\Omega'}|\nabla u_\varepsilon|^2 {\mathrm d}x+
\frac{1}{2}\int_{\Omega'} \Big(\big(x-x_{\varepsilon,j}\big)\cdot \nabla|\nabla u_\varepsilon|^2\Big){\mathrm d}x
-\int_{\partial\Omega'}\Big(\big(x-x_{\varepsilon,j}\big)\cdot \nabla u_{\varepsilon} \Big)\frac{\partial u_\varepsilon}{\partial\nu}
{\mathrm d}\sigma\\ =&
\frac{2-N}{2}\int_{\Omega'}|\nabla u_\varepsilon|^2 {\mathrm d}x+\frac{1}{2}
\int_{\partial \Omega'} \Big(\big( x-x_{\varepsilon,j}\big)\cdot\nu\Big)|\nabla u_\varepsilon|^2 {\mathrm d}\sigma
-\int_{\partial\Omega'}\Big(\big(x-x_{\varepsilon,j}\big)\cdot\nabla u_{\varepsilon}\Big) \frac{\partial u_\varepsilon}{\partial\nu}
{\mathrm d}\sigma.
\end{split}
\end{equation}
On the other hand, by  multiplying $u_\varepsilon$ on both sides of (1.1) and integrating on $\Omega'$, we get
\begin{equation}\label{2019-11-24-04}
\begin{split}
\int_{\Omega'}|\nabla u_\varepsilon|^2 {\mathrm d}x
=\int_{\partial\Omega'}\frac{\partial u_\varepsilon}{\partial\nu}
  u_{\varepsilon} {\mathrm d}\sigma+\int_{\Omega'}\Big(Q(x)u_\varepsilon^{2^*}+\varepsilon u_\varepsilon^{s+1}   \Big){\mathrm d}x.
\end{split}
\end{equation}
Hence, from \eqref{2019-11-24-02}, \eqref{2019-11-24-03} and \eqref{2019-11-24-04}, we find \eqref{clp-10}.
\end{proof}

\begin{proof}[\underline{\textbf{Proof of Claim 1}}]
Putting \eqref{luou} into  \eqref{a22-1-8} and \eqref{a2-13-1}, then
\begin{equation*}\label{14-2-1}
\sum_{q=1}^m\sum_{h=0}^N c_{\varepsilon, h, m}\Bigl\langle \frac{ \partial PU_{x_{\varepsilon, j}, \lambda_{\varepsilon, j}}}{\partial x^{ i}},
\frac{ \partial PU_{x_{\varepsilon, q,\lambda_{\varepsilon, q}}}}{\partial x^{ h}}
\Bigr\rangle_{\varepsilon}=0,\quad q=1,\cdots,m,\; i=0,1,\cdots, N,
\end{equation*}
here $x^0$ can be viewed as $\lambda$. Let the matrix
$$\textbf{M}_j:=\left(\displaystyle\sum_{q=1}^m \Bigl\langle \frac{ \partial PU_{x_{\varepsilon, j}, \lambda_{\varepsilon, j}}}{\partial x^{ i}},
\frac{ \partial PU_{x_{\varepsilon, q,\lambda_{\varepsilon, q}}}}{\partial x^{ h}}
\Bigr\rangle_{\varepsilon} \right)_{0\leq i,h\leq N}.$$
By the definition of
$PU_{x_{\varepsilon, j}, \lambda_{\varepsilon, j}}$, we can compute that the matrix
$\textbf{M}_j$ is  invertible and then  we conclude that
$c_{\varepsilon, i, j}=0,   i=1, \cdots, N,  ~j=1, \cdots, m.$

\end{proof}

\section{ Some useful estimates and Lemmas}

\renewcommand{\theequation}{B.\arabic{equation}}
Let
$\varphi_{x,\lambda}(y)=U_{x,\lambda}-PU_{x,\lambda}$,
then we know
\begin{align*}
 \begin{cases}
- \Delta  \varphi_{x,\lambda}(y)=0,   ~~& \text{in} ~\Omega,
 \\
\varphi_{x,\lambda}(y) = U_{x,\lambda}(y), ~~ & \text{on} ~\partial\Omega.
\end{cases}
\end{align*}
Thus we have  $  \varphi_{x,\lambda}(y)  >0$ in $\Omega$.

\begin{Lem}[c.f. \cite{Rey1990}]
For any fixed  $K\subset\subset \Omega$ and  $(x,\lambda)\in K\times \R^+$, we have
\begin{equation}\label{4-24-11}
\varphi_{x,\lambda}(y)=O\Big(\frac{1}{\lambda^{(N-2)/2}}   \Big), ~\mbox{uniformly for any}~ y\in  \Omega.
\end{equation}
\end{Lem}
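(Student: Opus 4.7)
The plan is to exploit the fact that $\varphi_{x,\lambda}$ is harmonic in $\Omega$ and to estimate it by the maximum principle, reducing the problem to a straightforward pointwise estimate of $U_{x,\lambda}$ on $\partial\Omega$. First, observe that by the very definition $\varphi_{x,\lambda} = U_{x,\lambda} - PU_{x,\lambda}$, and since $PU_{x,\lambda}$ satisfies $-\Delta PU_{x,\lambda} = -\Delta U_{x,\lambda}$ in $\Omega$, the difference $\varphi_{x,\lambda}$ solves $-\Delta \varphi_{x,\lambda} = 0$ in $\Omega$ with boundary value $\varphi_{x,\lambda}\big|_{\partial \Omega} = U_{x,\lambda}\big|_{\partial \Omega}$.

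Next, since $x$ varies in a compact set $K \subset\subset \Omega$, there exists $d_0 = \mathrm{dist}(K, \partial \Omega) > 0$ such that $|z - x| \geq d_0$ for every $z \in \partial\Omega$ and every $x \in K$. Inserting this into the explicit formula for $U_{x,\lambda}$, one immediately gets, for $z \in \partial\Omega$,
\begin{equation*}
U_{x,\lambda}(z) = \bigl(N(N-2)\bigr)^{\frac{N-2}{4}} \frac{\lambda^{(N-2)/2}}{(1 + \lambda^2 |z-x|^2)^{(N-2)/2}} \leq C \frac{\lambda^{(N-2)/2}}{(1 + \lambda^2 d_0^2)^{(N-2)/2}} = O\Bigl(\frac{1}{\lambda^{(N-2)/2}}\Bigr),
\end{equation*}
uniformly in $x \in K$. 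Combining this with the maximum principle applied to the harmonic function $\varphi_{x,\lambda}$ (which is nonnegative, as already noted in the excerpt),
\begin{equation*}
0 < \varphi_{x,\lambda}(y) \leq \max_{z \in \partial \Omega} \varphi_{x,\lambda}(z) = \max_{z \in \partial \Omega} U_{x,\lambda}(z) = O\Bigl(\frac{1}{\lambda^{(N-2)/2}}\Bigr),
\end{equation*}
uniformly for $y \in \Omega$ and $(x,\lambda) \in K \times \R^+$, which is exactly \eqref{4-24-11}.

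There is essentially no obstacle here: the only ingredients are the harmonicity of $\varphi_{x,\lambda}$, the maximum principle, and the uniform lower bound $\mathrm{dist}(K, \partial \Omega) \geq d_0$. The sole subtlety, which is why one requires $x$ in a fixed compact subset of $\Omega$ rather than allowing $x$ to approach the boundary, is that the pointwise decay $\lambda^{-(N-2)/2}$ for $U_{x,\lambda}$ on $\partial\Omega$ fails if $x$ is permitted to touch $\partial\Omega$; under the standing assumption $(x,\lambda) \in K \times \R^+$ this issue does not arise.
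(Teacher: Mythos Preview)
Your argument is correct and is exactly the standard maximum-principle proof of this estimate; the paper itself does not give a proof but simply cites \cite{Rey1990}, where the same harmonic-function/boundary-estimate reasoning appears. There is nothing to add.
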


\begin{Lem}[c.f. \cite{CPY19}]
For any $m\in \N^+$ and $p>1$, it holds
\begin{equation}\label{4-24-12}
\big(\sum^m_{i=1}a_i\big)^p- a_1^p
=O\Big(a_1^{p-1}\big(\sum^m_{i=2}a_i\big)+\sum^m_{i=2}a_i^p   \Big),
\end{equation}
   and
   \begin{equation}\label{3-14-11}
   \Big( \sum^m_{i=1}a_i\big)^p-\sum^m_{i=1}a_i^p
   =
   \begin{cases}
   O\Big(\displaystyle \sum_{i\neq j}a_i^{\frac{p}{2}}a_j^{\frac{p}{2}}   \Big),~&1<p<2,\\
   O\Big(\displaystyle \sum_{i\neq j}a_i^{p-1}a_j    \Big),~&p\geq 2.
   \end{cases}
   \end{equation}
   \end{Lem}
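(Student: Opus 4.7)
The plan is to treat these as two standard pointwise inequalities for nonnegative reals. The first is a Taylor-type expansion that isolates the distinguished variable $a_1$, and the second is a symmetric version that naturally splits into the regimes $1<p<2$ and $p\geq 2$.

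For \eqref{4-24-12}, I would write $S=\sum_{i\geq 2}a_i$ and apply the fundamental theorem of calculus to $t\mapsto (a_1+tS)^p$:
$$(a_1+S)^p-a_1^p \,=\, pS\int_0^1(a_1+tS)^{p-1}\,dt.$$
Since $p>1$, the elementary bound $(a+b)^{p-1}\leq C_p(a^{p-1}+b^{p-1})$ holds for all $a,b\geq 0$, by subadditivity of $t^{p-1}$ when $1<p\leq 2$, and by the convexity estimate $(a+b)^{p-1}\leq 2^{p-2}(a^{p-1}+b^{p-1})$ when $p\geq 2$. Plugging this in gives $(a_1+S)^p-a_1^p\leq C_p(S\,a_1^{p-1}+S^p)$, and the power-mean inequality $S^p\leq m^{p-1}\sum_{i\geq 2}a_i^p$ then yields \eqref{4-24-12}.

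For \eqref{3-14-11}, I would first establish the case $m=2$ and then induct. Assuming WLOG $a_1\geq a_2$, the $m=2$ version of the first step already delivers $(a_1+a_2)^p-a_1^p-a_2^p\leq C\,a_1^{p-1}a_2$. When $p\geq 2$ this is of the desired form. When $1<p<2$, I would rewrite $a_1^{p-1}a_2=a_1^{p/2}a_2^{p/2}(a_2/a_1)^{1-p/2}$ and note that $a_2/a_1\leq 1$ with $1-p/2>0$, so the factor is at most $1$ and the bound becomes $a_1^{p/2}a_2^{p/2}$. The inductive step writes $\sum_{i=1}^m a_i=A+a_m$ with $A=\sum_{i<m}a_i$, applies the $m=2$ bound to the pair $(A,a_m)$, and invokes the induction hypothesis on $A^p-\sum_{i<m}a_i^p$; the surviving cross terms $A^{p-1}a_m$ (resp.\ $A^{p/2}a_m^{p/2}$) are expanded by means of $A^q\leq C_{m,q}\sum_{i<m}a_i^q$, which is trivial subadditivity when $q\leq 1$ and the power-mean inequality when $q\geq 1$, applied with $q=p-1$ or $q=p/2$.

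The only point requiring mild care is the bookkeeping in the inductive step: after expanding all cross terms one must verify that the resulting sum is indeed majorized by the fully symmetric quantity $\sum_{i\neq j}a_i^{p-1}a_j$ (resp.\ $\sum_{i\neq j}a_i^{p/2}a_j^{p/2}$) with a constant depending only on $m$ and $p$. There is no analytic obstacle; everything reduces to elementary monotonicity and the two pointwise inequalities $(a+b)^{p-1}\leq C_p(a^{p-1}+b^{p-1})$ and $(a_2/a_1)^{1-p/2}\leq 1$. As an alternative to induction, one could use homogeneity of degree $p$ to normalize $\sum a_i=1$ and argue by continuity on the simplex, but the direct two-case proof above is shorter and more transparent.
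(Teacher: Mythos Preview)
Your argument is correct. Note that the paper does not supply its own proof of this lemma: it is quoted verbatim from \cite{CPY19} as a known elementary inequality, so there is no ``paper's approach'' to compare against. Your integral-representation proof of \eqref{4-24-12} and the two-case induction for \eqref{3-14-11} are the standard route and go through cleanly; the only implicit hypothesis you are using---that the $a_i$ are nonnegative---is indeed the intended setting, since this is how the lemma is applied throughout (to the bubbles $PU_{x_{\varepsilon,j},\lambda_{\varepsilon,j}}$ and to $|w_\varepsilon|$, $|v_\varepsilon|$).
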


\begin{Lem}[c.f. \cite{Wei}, Lemma B.2]
For any constant $0<\theta\leq N-2$, there is a constant $C>0$, such that
 \begin{equation}\label{AB.2}
   \int_{\R^N}\frac{1}{|y-z|^{N-2}}\frac{1}{(1+|z|)^{2+\theta}}{\mathrm d}z\leq
   \begin{cases}
  {C\big(1+|y|\big)^{-\theta}},~& \theta< N-2,\\
    {C \big |\log |y| \big| \big(1+|y|\big)^{-\theta}},~& \theta=N-2.
   \end{cases}
 \end{equation}
\end{Lem}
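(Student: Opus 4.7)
The plan is to prove the pointwise bound by a standard dyadic decomposition of the integration domain in $z$ relative to the parameter $y$. For $|y|$ bounded (say $|y|\le 2$), the right-hand side is a positive constant and the left-hand side is uniformly bounded in $y$: the singularity $|y-z|^{-(N-2)}$ is locally integrable in $\R^N$ and the decay factor $(1+|z|)^{-(2+\theta)}$ with $\theta>0$ is integrable at infinity. So the only substantive case is $|y|\ge 2$, and all bounds written below are for this range.

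For $|y|$ large I would split $\R^N$ into three regions:
\begin{equation*}
A_1=\{|z|\le |y|/2\},\q A_2=\{|z-y|\le |y|/2\},\q A_3=\R^N\setminus(A_1\cup A_2).
\end{equation*}
On $A_1$ we have $|y-z|\ge |y|/2$, so $|y-z|^{-(N-2)}\le C|y|^{-(N-2)}$. Passing to polar coordinates,
\begin{equation*}
\int_{A_1}\frac{{\mathrm d}z}{(1+|z|)^{2+\theta}}\le C\int_0^{|y|/2}\frac{r^{N-1}}{(1+r)^{2+\theta}}{\mathrm d}r
= \begin{cases} O(|y|^{N-2-\theta}),& \theta<N-2,\\ O(\log|y|),& \theta=N-2,\end{cases}
\end{equation*}
and multiplying by $|y|^{-(N-2)}$ yields exactly the claimed $(1+|y|)^{-\theta}$ (resp.\ $|\log|y||(1+|y|)^{-\theta}$) bound. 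On $A_2$ we have $|z|\ge |y|/2$, so $(1+|z|)^{-(2+\theta)}\le C|y|^{-(2+\theta)}$, and the substitution $w=z-y$ gives $\int_{A_2}|y-z|^{-(N-2)}{\mathrm d}z=\int_{|w|\le |y|/2}|w|^{-(N-2)}{\mathrm d}w=O(|y|^2)$, so the $A_2$ contribution is $O(|y|^{-\theta})$.

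On $A_3$ both $|z|\ge |y|/2$ and $|y-z|\ge |y|/2$ hold, and I would split further into $A_3\cap\{|z|\le 2|y|\}$ and $A_3\cap\{|z|\ge 2|y|\}$. On the first piece, bound $(1+|z|)^{-(2+\theta)}\le C|y|^{-(2+\theta)}$ and note that $A_3\cap\{|z|\le 2|y|\}\subset\{|y|/2\le |z-y|\le 3|y|\}$, so $\int|y-z|^{-(N-2)}{\mathrm d}z=O(|y|^2)$, giving $O(|y|^{-\theta})$. On the second piece, $|z-y|\ge |z|/2$ and $|z|\ge 2|y|$, so the integrand is dominated by $C|z|^{-(N+\theta)}$ and the integral over $|z|\ge 2|y|$ is $O(|y|^{-\theta})$. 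Summing the contributions from $A_1$, $A_2$, $A_3$ yields the claim; note that only the $A_1$ estimate is responsible for the logarithmic factor at the borderline $\theta=N-2$, and all other regions contribute $O(|y|^{-\theta})$ without any log.

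There is no essential obstacle — the only care needed is the bookkeeping at the endpoint $\theta=N-2$, where one must verify that the logarithmic blow-up comes strictly from region $A_1$ (where neither $|z|^{-(2+\theta)}$ nor $|y-z|^{-(N-2)}$ can individually be integrated against the other without producing a log) and not from $A_2$ or $A_3$. A minor additional care is ensuring the estimate extends uniformly to $|y|\le 2$: there one simply observes that the integrand is bounded near $z=y$ by an integrable radial function and decays like $(1+|z|)^{-(2+\theta)}$ at infinity, giving a uniform constant which absorbs into the constant $C$ once one writes $(1+|y|)^{-\theta}\asymp 1$ on this range.
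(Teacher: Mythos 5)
Your approach---splitting $\R^N$ into $\{|z|\le|y|/2\}$, $\{|z-y|\le|y|/2\}$, and the complementary region (further subdivided at $|z|=2|y|$), and freezing whichever factor is nonsingular on each piece---is the standard proof of this type of convolution estimate and is essentially the argument behind Lemma B.2 in the Wei--Yan paper that this paper cites (the paper itself does not reprove the lemma). Your region-by-region bounds are correct: on $A_1$ the kernel is $\asymp|y|^{-(N-2)}$ and $\int_0^{|y|/2} r^{N-3-\theta}\,{\mathrm d}r$ is where the dichotomy $\theta<N-2$ versus $\theta=N-2$ is decided; on $A_2$ the weight is $\asymp|y|^{-(2+\theta)}$ and the kernel integrates to $O(|y|^2)$; on $A_3$ both subcases give $O(|y|^{-\theta})$. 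You also correctly isolate the logarithm as coming only from $A_1$.

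One point deserves a caveat, though it concerns the statement more than your argument. For $\theta=N-2$ the right-hand side $C\,|\log|y||\,(1+|y|)^{-\theta}$ vanishes at $|y|=1$, while the left-hand side is strictly positive there, so the inequality as literally written fails near $|y|=1$. Your closing remark---that for $|y|\le 2$ the integral is uniformly bounded and one can absorb it into $C$ since $(1+|y|)^{-\theta}\asymp1$---does not repair this, because the factor $|\log|y||$ remains and is not bounded below. In the cited source and in standard usage the borderline bound is either stated for $|y|$ large or written with $\log(2+|y|)$, which avoids the issue. This is a cosmetic defect of the lemma as transcribed here; it does not affect the paper's application, which is in the regime $\bar\lambda_\e\to\infty$, nor the substance of your decomposition. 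With that caveat recorded, your proof is correct.
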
 

\begin{Lem}[Poincar\'e-Miranda theorem, c.f.\cite{Kulpa,Miranda}]\label{lem-B-1}
Consider $n$ continuous functions of
$n$ variables, $f_1,\cdots,f_n$. Assume that for each variable
$x_i$, the function $f_i$ is constantly negative when $x_i =-1$
 and constantly positive when $x_i = 1$. Then there is a point in the
$n$-dimensional cube $[-1,1]^{n}$
 in which all functions are simultaneously equal to $0$.
\end{Lem}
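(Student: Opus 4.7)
The Poincar\'e--Miranda theorem is a classical consequence of (and in fact equivalent to) the Brouwer fixed point theorem. The plan is to give a short constructive proof by reformulating the zero-finding problem as a fixed point problem on the cube $Q=[-1,1]^n$. Define a continuous self-map $h:Q\to Q$ componentwise by
\[
h_i(x)=\max\bigl(-1,\,\min(1,\,x_i-f_i(x))\bigr),\qquad i=1,\dots,n.
\]
Each $h_i$ is continuous and takes values in $[-1,1]$, so $h$ maps the convex compact cube into itself. Brouwer's theorem then produces some $x^*\in Q$ with $h(x^*)=x^*$.

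The second step is to show $f(x^*)=0$, using the sign hypotheses to eliminate all boundary cases. Suppose first that some coordinate satisfies $x^*_i=1$. Then $h_i(x^*)=1$ forces $x^*_i-f_i(x^*)\ge 1$, i.e.\ $f_i(x^*)\le 0$; but on the face $\{x_i=1\}$ we have $f_i>0$, a contradiction. Symmetrically, $x^*_i=-1$ would give $f_i(x^*)\ge 0$ on a face where $f_i<0$, again impossible. Hence every coordinate of $x^*$ lies strictly in $(-1,1)$. On this interior region both truncations in the definition of $h_i$ are inactive, so the fixed-point equation reads $x^*_i-f_i(x^*)=x^*_i$, which yields $f_i(x^*)=0$ for every $i$.

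There is essentially no obstacle here: the argument only relies on the strictness of the sign conditions on each face (granted by the statement) together with Brouwer's theorem. An alternative route is degree-theoretic: the sign conditions imply that $f$ is nowhere zero on $\partial Q$ and can be joined to the identity through a continuous family of maps nonvanishing on $\partial Q$, so $\deg(f,\operatorname{int} Q,0)=\deg(\operatorname{id},\operatorname{int} Q,0)=1\ne 0$ and $f$ must vanish inside $Q$. The Brouwer-based proof above is more elementary and fits the appendix context of the paper.
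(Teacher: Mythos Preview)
Your proof is correct and is the standard reduction of Poincar\'e--Miranda to Brouwer's fixed point theorem. Note, however, that the paper does not give its own proof of this lemma at all: it is stated in the appendix as a known result with a reference to \cite{Kulpa,Miranda}, so there is no in-paper argument to compare against. Your write-up would serve perfectly well as a self-contained proof if one wished to include it.
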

\vskip 0.5 cm

\noindent\textbf{Acknowledgments} This work was done while Lipeng Duan  was visiting
Department of Mathematical Sciences of University of Bath.    Lipeng Duan  is grateful to Professor Jun Yang   for many    insightful discussions.  
   Lipeng Duan was  supported 
by the China Scholarship Council and  NSFC grants (No.11771167). 
 Shuying Tian was partially supported by NSFC grants (No.12071364,11871387) and the Fundamental Research Funds for the Central Universities(No. WUT: 2020IA003).


\bibliography{reference}

\end{document}